\DeclareSymbolFont{largesymbol}{OMX}{yhex}{m}{n}
\DeclareMathAccent{\Widehat}{\mathord}{largesymbol}{"62}
\newcommand{\zh}[1]{{\color{blue}{\bf\sf [ZH: #1]}}}
\newtheorem{theorem}{Theorem}
\newtheorem{definition}{Definition}
\newtheorem{remark}{Remark}
\newtheorem{lemma}{lemma}
\newtheorem{assumption}{Assumption}
\newtheorem{corollary}{Corollary}
\newtheorem{proposition}{Proposition}
\newtheorem{proof}{proof}
\newcommand*{\circled}[1]{\lower.7ex\hbox{\tikz\draw (0pt, 0pt)%
    circle (.5em) node {\makebox[1em][c]{\small #1}};}}
\def\cS{\mathcal{S}}
\newcommand{\argmin}{\operatornamewithlimits{argmin}}
\newcommand{\argmax}{\operatornamewithlimits{argmax}}
\title{Partial Identification with Proxy of Latent Confoundings via Sum-of-ratios Fractional Programming
}
\author{
  Zhiheng Zhang \\
  Institute for Interdisciplinary Information Sciences, Tsinghua University\\
  \texttt{zhiheng-20@mails.tsinghua.edu.cn} \\
}
\begin{document}

\maketitle

\begin{abstract} 
Due to the unobservability of confoundings, there has been a widespread concern on how to compute causality quantitatively. To address this challenge, proxy based negative control approaches have been commonly adopted, where auxiliary outcome variables $\bm{W}$ are introduced as the proxy of confoundings $\bm{U}$. However, these approaches rely on strong assumptions such as reversibility, completeness or bridge functions. These assumptions lack intuitive empirical interpretation and solid verification technique, hence their applications in the real world is limited. For instance, these approaches are inapplicable when the transition matrix $P(\bm{W} \mid \bm{U})$ is irreversible. In this paper, we focus on a weaker assumption called the partial observability of $P(\bm{W} \mid \bm{U})$. We develop a more general single-proxy negative control method called Partial Identification via Sum-of-ratios Fractional Programming (PI-SFP). It is a global optimization algorithm based on the branch-and-bound strategy, aiming to provide the valid bound of the causal effect. In simulation, PI-SFP provides promising numerical results, and fill in the blank spots that can not be handled in the previous literature, such as we have partial information of $P(\bm{W} \mid \bm{U})$.
\end{abstract}

\keywords{Causality; Partial identification; Fractional programming; Branch-and-bound; Average causal effect}

\tableofcontents

\section{Introduction}

Identifying causal effects from observational data is a fundamental question in economics, social science and epidemiology~\cite{pearl2009causality}. Causal inference is usually challenging with the existence of latent confoundings, which impedes us from extracting useful causal information from directly applying statistical association studies \cite{pearl2009causality}. In order to adjust for the bias incurred from latent confoundings, people usually need to reply on auxiliary variables for confounding adjustment. These auxiliary variables mainly include instrument variable (IV) method~\cite{soderstrom2002instrumental}, and proximal variables \cite{kuroki2014measurement, tchetgen2020introduction}, or both~\cite{miao2018confounding, shi2020multiply, singh2020kernel, kallus2021causal}. In this paper, we are primarily interested in causal identification with proxies of latent confounders. Fig.~\ref{Fig.sub.1} and Fig.~\ref{Fig.sub.2} are examples of such methods, where to identify the causal effect of $X$ towards $Y$, we assume there are additional random variables such as $W$ or $Z$ that are aossicated with the latent confounders, and use these variables as ``proxies'' of latent founders for confounding adjustment.

Empirical studies on using proxies for confounding adjustment has a long history, the earliest could be traced back to ~\cite{wickens1972note}, in which the authors analyzed the potential benefit of using proxies as an alternative of latent confounding in least square estimations. This was further applied in observational studies such as~\cite{kolenikov2009socioeconomic, wooldridge2009estimating}. Other empirical studies include~\cite{frost1979proxy,rothman2008modern}. On the theoretical side, existing research could be mainly splitted into two categories, the first is the ``single-proxy scenario'', where we assume there is only a single proxy variable (Figure~\ref{Fig.sub.1}); and the second is the ``double-proxy scenario'', where we have access to data from at least two proxy variables (Figures~\ref{Fig.sub.2},~\ref{Fig.sub.3}). 

Figure~\ref{Fig.sub.1} illustrates the causal diagram considered in the single-proxy scenario. When both $W$ and $U$ are discrete random variables with finite number of choices, the state of the art research include \cite{pearl2012measurement}. More specifically, they prove that the true causal mechanism $p(y \mid do(x))$ is identifiable when the probability transition matrix $P(W \mid U)$ is fully observable and invertible. 

When $P(\bm{W}\mid \bm{U})$ is not observable, Pearl further considered the double-proxy cases extended from \cite{cai2012identifying}, where the exposure proxy control $\bm{Z}$ and the outcome proxy control $\bm{W}$ both exist. With the auxiliary of $\bm{Z}$, the reversibility of $P(\bm{W} \mid \bm{U})$ is strengthened to that of $P(\bm{Z},\bm{W}\mid x)$ and $P(y, \bm{Z},\bm{W}\mid x)$. This was further extended to a new topic 'negative control' \cite{miao2018identifying, cui2020semiparametric, tchetgen2020introduction, deaner2018proxy,shi2020multiply,singh2020kernel, nagasawa2018identification,kallus2021causal}. In all, these work are all cursed by certain reversibility or completeness assumptions or their weaker forms.

\begin{figure}[h]
\centering  
\subfigure[]{
\label{Fig.sub.1}
\includegraphics[scale = 0.5]{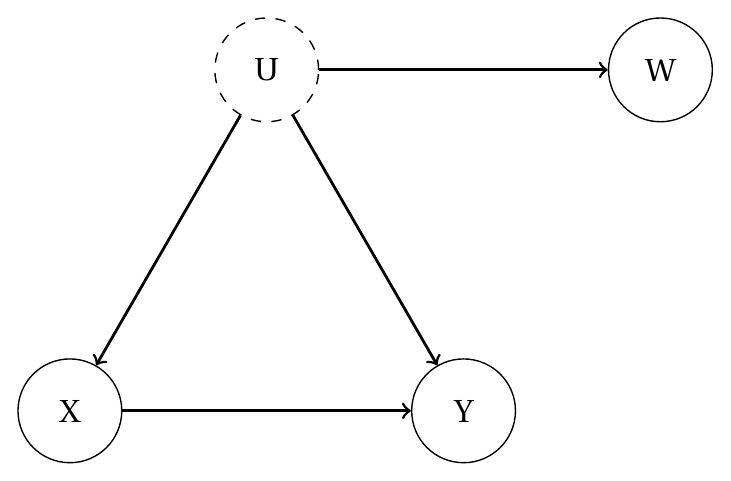}}
\subfigure[]{
\label{Fig.sub.2}
\includegraphics[scale = 0.5]{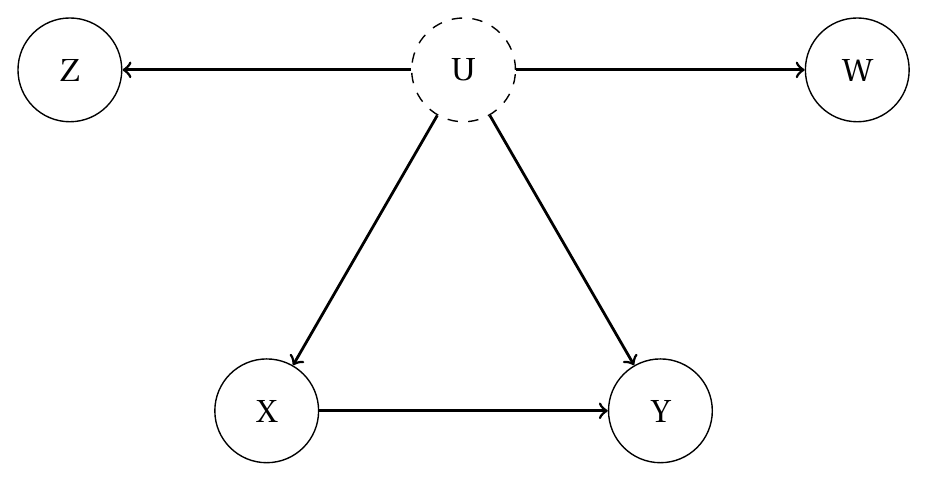}}
\subfigure[]{
\label{Fig.sub.3}
\includegraphics[scale = 0.5]{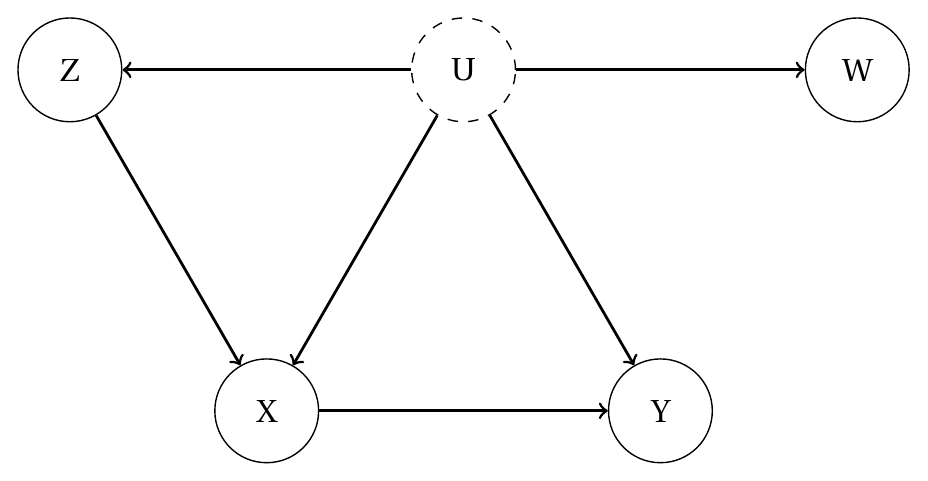}}
\caption{Estimating ACE with confoundings via (a) single control or (b)(c) double control. The nodes denote: $\bm{Z,W}-$ proxies, $\bm{X}-$ treatment, $\bm{Y}-$ outcome, and $\bm{U}-$ unobserved confoundings. }
\label{fig1}
\end{figure}

Double proxy requires observing new auxilliary variable, which may not be practical in real applications, in this paper, we revisit the single proxy case.
In conclusion, excessively strong conditions on $P(\bm{W} \mid \bm{U})$ are imposed to sufficiently achieve the accurate value of ACE. In this paper, we propose new algorithms to identify a bound of the causal effect when the probability transition matrix $P(\bm{W} \mid \bm{U})$ is only partially observable. Moreover, our method does not require the $P(\bm{W} \mid \bm{U})$ to be invertible for the desired identification guarantee. Our algorithm is a fractional programming based approach which seeks learning a bound of the causal effect via solving a constraint fractional program.

By this motivation, in our paper, we focus on the single-proxy case and attempt to weaken the condition 'total precise observability' of $P(\bm{W} \mid \bm{U})$ to 'partial observability'. That is, for each $dim(\bm{W})$-dimensional vector $P(\bm{W} \mid \bm{U} = u)$, we only assume that it is located in a particular subarea instead of a fixed point in the $dim(\bm{W})$-dimensional space. More importantly, such completeness/reversibility conditions in the previous proxy control is not required. On this basis, we formulate this as a constraint fractional programming problem and developed new optimization approaches to solve this problem. This is different from traditional fractional programming methods~\cite{stancu2012fractional}, since the original strong assumptions about the concavity do not exist. To summarize, compared with the previous literature, we quest for the partial identification of ACE, rather than its unique closed form, under the weaker assumption.

The paper is organized as follows. In section.~1, we introduce the basic knowledge of partial identification. In section.~2, we review the construction of ACE and the evolution of the relevant hypotheses in the previous literature. To address their shortcomings, our new hypothesis is proposed, which possesses deeper intuitivity, applicability, and verifiability. In section.~3, we establish the estimation of ACE as a sum-of-ratios fractional programming problem. In formulation, we explicitly construct our objective function and the identification region of the solutions. Then in section.~4. we solve our problem by branch-and-bound strategy in practice. In section.~5, we focus on the theoretical global convergence property of our algorithm. In section.~6, we make simulations to show the effectiveness of our algorithm. Finally, in section.~7, we provide several further topics and discussions to illustrate the great generalizability and scalability of our approach.

\section{Preliminaries}

The causal effect is strongly related to the 'do' operator~\cite{pearl2000models, reason:Pearl09a}, which can be seen as an external intervention. Specifically, the causal effect of treatment $\bm{X}$ on outcome $\bm{Y}$ is denoted as $f(y \mid do(x))$ in Fig.~\ref{fig1}, where the symbol $do(x)$ represents that the treatment $\bm{X}$ is forced to be a fixed value $x$, and $f(\cdot)$ denotes the probability mass/density function for discrete/continuous variables. According to the back-door criteria~\cite{pearl2000models}, $f(y \mid do(x))$ is identified as follows:
\begin{equation}
    \begin{aligned}
    f(y \mid do(x)) = \sum_{i=1}^{dim(\bm{U})}f(y \mid u_i, x)f(u_i) = f(y,x) + \sum_{i=1}^{dim(\bm{U})} \frac{f(y,u_i,X=x) f(u_i,X\neq x)}{f(u_i,X=x)}, \label{basic_formulation}
    \end{aligned}
\end{equation}
where $dim(\cdot)$ denotes the dimension of the variables. The decomposition in the second equation is due to $f(u_i) = f(u_i,X=x)+f(u_i,X\neq x)$. In \cite{kuroki2014measurement}, they assumed that the transition matrix $P(\bm{W}\mid \bm{U})$ is totally observable and reversible. Then they claimed that $f(y \mid do(x))$ is identifiable, namely that the value of each item in Eqn.~\eqref{basic_formulation} can be explicitly extracted as follows\footnote{For convenience in our paper, we use the bold letters to denote the column vector formed by all its corresponding possible values. For instance, $f(y,\bm{U},X=x) = [f(y,u_1,X=x), f(y,u_2,X=x),...f(y,u_{dim(\bm{U})},X=x)]^{T}$. Moreover, if there are two bold letters in a symbol such as $P(\bm{W} \mid \bm{U})$, it denotes the matrix namely that $[f(\bm{W} \mid {u_1}), f(\bm{W} \mid u_2),...f(\bm{W} \mid u_{dim(\bm{U}))}]$, where $f(\bm{W} \mid u_i) = [f(w_1 \mid u_i),f(w_2 \mid u_i),...f(w_{dim(\bm{W})} \mid u_i)]^{T}, i=1,2,...dim(\bm{U})$.}:
\begin{equation}
    \begin{aligned}
    \left[\begin{matrix} 
    &f(y,\bm{U},X=x) \\
    &f(\bm{U},X=x) \\
    &f(\bm{U},X\neq x) 
    \end{matrix}\right] = {P(\bm{W}\mid \bm{U})}^{-1} \left[\begin{matrix} 
    &f(y,\bm{W},X=x) \\
    &f(\bm{W},X=x) \\
    &f(\bm{W},X\neq x) 
    \end{matrix}\right].
    \end{aligned}
\end{equation}\label{inverse_matrix}

Our paper is for generalization. We consider the partial identification of $f(y \mid do(x))$ instead of its unique form computation. This is due to our weakening of assumption on $P(\bm{W} \mid \bm{U})$. Compared with \cite{kuroki2014measurement}, we relax its total observability to the partial observability, and delete the guarantee for its reversibility (thus ${P(\bm{W} \mid \bm{U})}^{-1}$ in Eqn.~\eqref{inverse_matrix} may not exist). Specifically, we extend the identification region of $P(\bm{W}\mid \bm{U})$ from a fixed distribution to the family $\mathscr{P}$, such that:
\begin{equation}
    \begin{aligned}
    \mathscr{P} = \{{P^{}(\bm{W} \mid \bm{U})}: \left[ {P^{}(\bm{W} \mid \bm{U})} - \underline{P^{}(\bm{W} \mid \bm{U})} , \overline{P^{}(\bm{W} \mid \bm{U})} - {P^{}(\bm{W} \mid \bm{U})} \right]\text{is~ non-negative}\}, \label{original_ass_wu}
    \end{aligned}
\end{equation}
where $\underline{P^{}(\bm{W} \mid \bm{U})}$ and $\overline{P^{}(\bm{W} \mid \bm{U})}$ are two priori known matrices to bound ${P^{}(\bm{W} \mid \bm{U})}$. This is a common scenario in the real-world. Although \cite{kuroki2014measurement, greenland2005multiple} have already generally corroborated that this partial observability is verifiable, it has not been fully discussed in the recent literature. In our paper, we will reiterate the condition $P(\bm{W}\mid \bm{U}) \in \mathscr{P}$ as the 'partial observability assumption' in our following text. Under this assumption, it is natural to set up our original goal - seeking the lower bound of $f(y \mid do(x))$ (upper bound is symmetric) via solving the following partial identification problem:
\begin{equation}
    \begin{aligned}
    f(y,X=x) + \min_{f(y, \bm{W}, \bm{U},\bm{X}) \in \mathcal{F}} \backslash \max_{f(y, \bm{W}, \bm{U},\bm{X}) \in \mathcal{F}} \sum_{i=1}^{dim(\bm{U})} \frac{f(y,u_i,X=x) f(u_i,X\neq x)}{f(u_i,X=x)}.\label{simple_formulation}
    \end{aligned}
\end{equation}
Here $f(y, \bm{W}, \bm{U},\bm{X})$ is a three-order ($dim(\bm{W})* dim(\bm{U})* dim(\bm{X})$) tensor indicating the joint probability distribution of each $ w \in \bm{W}, u \in \bm{U}, x \in \bm{X}$ together with $\bm{Y}=y$. Then the set $\mathcal{F}$ = \{$f(y, \bm{W}, \bm{U},\bm{X}): f(y, \bm{W}, \bm{U},\bm{X})$ is compatible with $P(\bm{W}\mid \bm{U}) \in \mathscr{P}$\}. 

Achieving this goal faces with challenges. Firstly, its tight bound is hard to be achieved. It is due to the difficulty of representing feasible region $\mathcal{F}$ in a closed form. Its boundary constraints contains the partial observable $P(\bm{W} \mid \bm{U})$, which can be seen as a well-known inverse problem called first-kind Fredholm integral equation\footnote{One of the boundary constraints of $\mathcal{F}$ can be expressed as $f(y,w,x) = \sum_{i=1}^{dim(\bm{U})} (f(w_j\mid u_i)\sum_{j=1}^{dim(\bm{W})}f(y,w_j,u_i,x))$, where each $f(w\mid u)$ is bounded by $\underline{P(\bm{W} \mid \bm{U})}$ and $\overline{P(\bm{W} \mid \bm{U})}$. It is in the form of the first-kind Fredholm integral equation in the discrete case.}~\cite{landweber1951iteration, tarantola2005inverse} in the discrete case. It is ill-posed when $P(\bm{W} \mid \bm{U})$ is irreversible and the closed form expression of $\mathcal{F}$ can only be approximated iteratively by complex numerical methods~\cite{strand1968statistical}. With this reason, we attempt to relax the feasible region from $\mathcal{F}$ to $\mathcal{\widetilde{F}}$ ($\mathcal{F} \subseteq \mathcal{\widetilde{F}}$), which contains a closed-form expression. Specifically, the relaxed condition $f(y, \bm{W}, \bm{U},\bm{X}) \in \mathcal{\widetilde{F}}$ is to keep the feasible region of $f(y,\bm{U}, X=x), f(y,\bm{U}, X= x), f(\bm{U}, X\neq x)$ in a calculable closed-form, which will be denoted as $IR_{F(y,U,X=x)}, IR_{F(U,X=x)},  IR_{F(U,X\neq x)}$ respectively in our final objective function.

Secondly, even if we retreat and seek its valid bound as above, it is still non-trivial. It is due to the difficulty of finding a corresponding optimization method. As the causal effect is expressed as a form of fractional summation, we naturally resort to techniques in sum-of-ratios fractional programming (SFP). The general form of SFP summarized in \cite{schaible2003fractional} is represented as follows:
\begin{equation}
    \begin{aligned}
    \min \{\sum_{i=1}^{M} \frac{g_{1i}(\bm{\phi})}{g_{2i}(\bm{\phi})}\}, \bm{\phi} \in S, g_{1i}(\bm{\phi})~\text{is convex}, g_{2i}(\bm{\phi})~\text{is concave}, g_{1i}(\bm{\phi})\geq 0, g_{2i}(\bm{\phi}) > 0,
    \end{aligned}\label{SFP_form}
\end{equation}
where $S$ is a convex set, and $M \geq 2$ is a integer. In order to ensure the global nature of optimal solutions, $g_{1i}(\Phi)$ and $g_{2i}(\Phi)$ are assumed to be convex and concave respectively. In contrast with Formulation.~\ref{simple_formulation}, we should choose
\begin{equation}
    \begin{aligned}
     & M = dim(\bm{U}), \bm{\phi} = (({f(y,u_i,X=x),...})^T,  (f(u_i,X=x), ...)^T,  (f(u_i,X\neq x),...)^T), \\
    & g_{1i}(\bm{\phi}) = f(y,u_i,X=x)f(u_i,X\neq x),~g_{2i}(\bm{\phi}) = f(u_i,X=x)~i=1,2,...dim(U).
    \end{aligned}\label{transformation}
\end{equation} 
However, our construction violates the traditional convex-concave assumption, since $g_{1i}(\bm{\phi})$ is not convex. Thus these previous SFP algorithms~\cite{schaible2003fractional} do not work.

In order to handle this case, we design a algorithm called Partial Identification with Sum-of-ratios Fractional Programming (PI-SFP). This algorithm is motivated by branch and bound strategy~\cite{lawler1966branch,branch-bound-SFP} and DC programming~\cite{horst1999dc, tao1997convex, pei2013global}, namely that we iteratively search the optimal bound by means of feasible region partition. We also provide the complete convergence analysis. To our knowledge, our paper is a new attempt to estimating casual effect via this optimization technique. Moreover, this algorithm also contributes to the existing literature on the convergence analysis in branch and bound strategy~\cite{dai2005conical,lawler1966branch,branch-bound-SFP, pei2013global}.

For recent literature, just because of these two challenges of solving the partial observability case, they avoided further discussion on the observability of $P(\bm{W}\mid \bm{U})$. Instead, they introduced another auxiliary variable $\bm{Z}$ and formalized the problem as the double negative control~\cite{miao2018identifying, cui2020semiparametric, tchetgen2020introduction, deaner2018proxy,shi2020multiply,singh2020kernel, nagasawa2018identification,kallus2021causal}. However, as shown in Table.~\ref{table_literature}, there is no free lunch. These work are also restricted by additional assumptions about $\bm{Z}$, such as completeness condition, bridge function condition, etc. Importantly, these work are still all based on the reversibility of $P(\bm{W} \mid \bm{U})$ just except for~\cite{kallus2021causal}, who substituted it as a weaker bridge function condition. Hence when the irreversibility (i.e., multilinearity in some rows or columns) of the $P(\bm{W}\mid \bm{U})$ occurs in our real world, these methods will be easily invalidated. In fact, difficulties have already been encountered when doing numerical computations if the conditional number of $P(\bm{W}\mid \bm{U})$ is too large\footnote{The conditional number of matrix $A$ is denoted as $\kappa(A) = \frac{\sigma_{max}(A)}{\sigma_{min}(A)}$, where $\sigma_{max}(A)$, $\sigma_{min}(A)$ denote the maximal/minimal singular values of $A$. If some rows/columns of $A$ are similar (or equal), then $\kappa(A)$ is large (or $+\infty$), and $A^{-1}$ is computationally hard (or even not exists.)}.

In conclusion, the revisit of single-proxy control under the partial observability of $P(\bm{W} \mid \bm{U})$ is challenging but necessary. It corresponds to a few common real-world scenarios, serving as the blank spots of these double-proxy control methods as illustrated above. In our paper, we will do a deeper exploration on estimating causal effect with this assumption. We propose algorithm called PI-SFP. Our contributions are summarized as follows:

\begin{table}[]
	\caption{Tools and assumptions of previous literature on partial identification. \cite{kuroki2014measurement}$(1)$ is with external studies, while $(2)$ is without external studies.}
	{
	\begin{tabular}{c | ccc | ccc}
		\hline
		\multirow{2}{*}{Literature}    & \multicolumn{3}{c|}{Tools} & \multicolumn{3}{c}{Assumptions }        \\ 
		\cline{2-7} 
		& \makecell[c]{Valid \\Instrument}        & \makecell[c]{Negative\\ exposure}   &\makecell[c]{Negative\\outcome}     & \makecell[c]{reversibility\\ completeness} &\makecell[c]{Bridge\\ function}  & \makecell[c]{Observability \\ of $P(\bm{W}\mid \bm{U})$}\\
		\hline
		\makecell[l]{{\cite{balke1994counterfactual}}\\ 	\cite{kitagawa2009identification}}  & \Checkmark          & \XSolid     &\XSolid        & \XSolid            &\XSolid         &\XSolid     \\
		\hline
		\makecell[l]{\cite{kuroki2014measurement}(1) \\ \cite{rothman2008modern} \\ \cite{leecausal}} & \XSolid          & \XSolid     &\Checkmark        & \Checkmark           &\XSolid         &\makecell[c]{\Checkmark \footnote{$P(\bm{W}\mid \bm{U})$ is assumed to be reversible and explicitly, totally observed.}}     \\
		\hline
		\makecell[l]{\cite{kuroki2014measurement}(2) \\ \cite{nagasawa2018identification}}& \XSolid          & \Checkmark     &\Checkmark        & \Checkmark            &\XSolid         &\XSolid     \\
		\hline
		\makecell[l]{\cite{miao2018identifying} \\\cite{shi2020multiply}\\ \cite{singh2020kernel} \\ \cite{cui2020semiparametric} \\\cite{tchetgen2020introduction} \\\cite{deaner2018proxy}} & \XSolid          & \Checkmark     &\Checkmark        & \Checkmark            &\Checkmark     &\XSolid     \\
		\hline
		\makecell[l]{\cite{kallus2021causal}} & \XSolid          & \Checkmark     &\Checkmark        & \XSolid          &\Checkmark     &\XSolid     \\ 
		\hline
		\makecell[l]{\textbf{Our paper}} & \XSolidBold          & \XSolidBold     &\CheckmarkBold        & \XSolidBold          &\XSolidBold     &\makecell[c]{\CheckmarkBold \footnote{In our paper, $P(\bm{W}\mid \bm{U})$ only needs be partially bounded.}\\   } \\
		\hline
	\end{tabular}  } 
	\label{table_literature}
\end{table}

\begin{itemize}
    \item We propose a novel analytical framework of seeking the valid bound of causal effect $f(y \mid do(x))$ via the partial observability of $P(\bm{W}\mid \bm{U})$, and provide a sufficient and necessary condition to justify whether the bound is tight or not.
    \item We develop a global optimization strategy called Partial Identification via Sum-of-ratios Fractional programming (PI-SFP). We theoretically prove that PI-SFP algorithm globally converges and can achieve the valid bound of $f(y \mid do(x))$ in an exponential rate. 
    \item We analyze the rationality and generalizability of PI-SFP via extended discussions, such as 1) motivation of partial observability assumption, 2) acceleration of PI-SFP, 3) graph structure extension and 4) generalization to the continuous confoundings.
\end{itemize}

\section{A fractional programming framework for partial identification}\label{framework}

\subsection{Definitions and assumptions}

Based on preliminaries, we reorganize all definitions and assumptions as follows.

\begin{definition}
$Y,Y_{0},Y_{1}\in [Y^L,Y^U], Z \in [Z^L, Z^U], X \in [X^{L}, X^{U}]$, $W \in [W^L, W^U]$, $U\in [U^{L}, U^{U}]$. Moreover, we use $dim(\bm{\cdot})$ to denote the dimension of variables. $d := dim(\bm{U})<+\infty$, and set of confoundings $\bm{U}$ is $\{u_1,u_2,...u_{d}\}$.\label{def_bound}
\end{definition}

In our paper, we consider the case $\bm{Z}$, $\bm{X}$, $\bm{W}$ are all discrete with dimensions $dim(\bm{Z})$, $dim(\bm{X})$, $dim(\bm{W})$\footnote{If $\bm{Z}, \bm{X}, \bm{W}$ are continuous in the real-world, we will do segmentation in their corresponding continuous intervals}. $Y$ can be discrete or continuous.

\begin{definition}
$Y_x$ is the value of $\bm{Y}$ when $X$ is forced to be $x$. On this basis, $ACE_{\bm{X} \rightarrow \bm{Y}}$ denotes the average causal effect (ACE) from $\bm{X}$ to $\bm{Y}$, namely that 
\begin{equation}
    \begin{aligned}
    ACE_{\bm{X} \rightarrow \bm{Y}} = \int_{x} E(Y_x)\pi(x) dx = \int_{x} \int_y f(Y_x=y)\pi(x) dx dy,
    \end{aligned}
\end{equation}

where $\pi(x)$ is a weight function of $\bm{X}$.

\label{def_ACE}
\end{definition}

It is called as generalized average causal effect in~\cite{kallus2021causal}. Moreover, it can degenerate to the traditional form~\cite{pearl2013testability} as
$ACE_{\bm{X}\rightarrow \bm{Y}} = E(Y_1)-E(Y_0)$, if we choose $dim(X)=2, X=\{0,1\}$, and $\pi(x) = \bm{sgn}(x)
$, where $\bm{sgn}(\cdot)$ is the sign function.

\begin{assumption}{\textbf{(partial observability assumption)}} $P(\bm{W} \mid \bm{U}) \in \mathscr{P}$. \label{ass_partial_bounded}
\end{assumption}
Here the set $\mathscr{P}$ is identified in Formulation.~\ref{original_ass_wu}.
$\underline{P^{}({\bm{W}} \mid {\bm{U})}}$ and $\overline{P^{}(\bm{W} \mid \bm{U})}$ are two a priori known matrices. According to Ass.~\ref{ass_partial_bounded}, we can derive that

\begin{equation}
    \begin{aligned}
    \left[\begin{matrix}
     +\overline{P^{}(\bm{W} \mid \bm{U})} f(y,\bm{U},X=x) - f(y,\bm{W},X=x)\\
    -\underline{P^{}(\bm{W} \mid \bm{U})} f(y,\bm{U},X=x) +f(y,\bm{W},X=x)
    \end{matrix}\right]
     \geq 0. 
    \end{aligned}\label{partial_observe}
\end{equation}
Here we use $\bm{S}\geq 0$ to denote $\bm{S}$ is a non-negative matrix. These inequalities is for preparation of the construction of the relaxed feasible region of $f(y, \bm{U}, \bm{W}, \bm{X})$. In the following part, we construct the model framework of searching the valid bound of $f(Y_x = y)$, and then extend it to the ACE case.

\subsection{{Objective function}}\label{sec_obj_fun}

This section aims to formalize the single proxy control under Ass.~\ref{ass_partial_bounded} into an optimization problem. In this process, we confront and address the two challenges illustrated in the preliminaries.

Recalling the Construction~(\ref{simple_formulation}), we only need to consider the minimum case, and the maximum case is symmetric. Our original goal is:
\begin{equation}
    \begin{aligned}
    &\text{min~} f(y,X=x)  +  \sum_{i=1}^{d} \frac{f(y,u_i,X=x) f(u_i, X\neq x)}{f(u_i,X=x)}\\  &\text{subject to: $f(y,\bm{U},\bm{W}, \bm{X}) \in \mathcal{F}$}.
    \label{eqn_basic_bound}
    \end{aligned}
\end{equation}
$\mathcal{F} = \{f(y, \bm{U}, \bm{W}, \bm{X}): f(y, \bm{U}, \bm{W}, \bm{X})$ \text{~is compatible with} \text{Ass.~\ref{ass_partial_bounded}} and observed $f(y, \bm{W}, \bm{X})$ \}. 
 
As we suggested in the preliminaries, the first challenge is the nonexistence of closed-form expression of $\mathcal{F}$. To solve it, we introduce the new symbol $\mathcal{\widetilde{F}}$ to formally describe the relaxation of the identification region of $f(y,\bm{U},\bm{W}, \bm{X})$. For preparation, we introduce the symbol $\bm{\theta}, \bm{\psi}, \bm{\omega}$ and follow the previous notation $\bm{\phi}$ in Formulation~(\ref{transformation}): 
\begin{equation}
    \begin{aligned}
    \begin{matrix}
    \theta_i &= f(y,u_i,X=x)\\
    \psi_i &= f(u_i,X=x)\\
    \omega_i &= f(u_i,X\neq x)\\
    \end{matrix},~
    \begin{matrix}
    \bm{\theta} &= (\theta_1, \theta_2,...\theta_d)^T\\
    \bm{\psi} &= (\psi_1, \psi_2,...\psi_d)^T\\
    \bm{\omega} &= (\omega_1, \omega_2,...\omega_d)^T\\
    \end{matrix},
    ~\bm{\phi} = \left(\begin{matrix}
    \bm{\theta}~  \bm{\psi}~ \bm{\omega}
    \end{matrix} \right).
    \end{aligned}
\end{equation}

Then we construct a broader set $\mathcal{\widetilde{F}}$ as follows:  
\begin{equation}
    \begin{aligned}
    \mathcal{\widetilde{F}}= \left\{f(y,\bm{U},\bm{W}, \bm{X}): \bm{\phi} \in IR_{\bm{\Phi}}, IR_{\bm{\Phi}} = IR^{1}_{\bm{\Phi}}\cap IR^{2}_{\bm{\Phi}} \right\},
    \end{aligned}\label{construction_F}
\end{equation}

where the set $IR^{1}_{\bm{\Phi}}$ is constructed by Formulation~(\ref{partial_observe}):

\begin{equation}
\begin{aligned}
   &IR^{1}_{\bm{\Phi}} = \{\bm{\phi}: \left[\begin{matrix}
   -\bm{I_{d*d}} \\ \bm{I_{d*d}}
   \end{matrix}\right] \left[\begin{matrix}
   &f(y,\bm{W},X=x)^{T} \\ &f(\bm{W},X=x)^{T} \\ &f(\bm{W},X\neq x)^{T}
   \end{matrix} \right]^{T}  -  \left[ \begin{matrix}
   &-\overline{P(\bm{W}\mid \bm{U})} \\  &\underline{P(\bm{W} \mid \bm{U})}
   \end{matrix} \right] \bm{\phi} \geq \bm{0}\}.\\
   \end{aligned}\label{IR_1}
\end{equation}

Here $\bm{I_{d*d}}$ denotes the $d*d$ identity matrix. Moreover, the set $IR^{2}_{\bm{\Phi}}$ indicates the natural constraints by default:

\begin{equation}
    \begin{aligned}
    IR_{\bm{\Phi}}^{2} = \left\{ \bm{\phi}:
    \left[\begin{matrix}
    &\bm{1_{1*d}} \bm{\theta} \\ &\bm{1_{1*d}} \bm{\phi} \\ &\bm{1_{1*d}} \bm{\omega} 
    \end{matrix}\right] = \left[ \begin{matrix}
    &f(y,X=x)\\ &f(X=x)\\ &f(X\neq x) \}
    \end{matrix}\right], \forall i,
\left\{\begin{matrix}
    \theta_i \in [0,f(y,X=x)] \\
    \phi_i \in (0,f(X=x)] \\
    \omega_i \in [0,f(X\neq x)]
\end{matrix} \right\}. \right\}    .
    \end{aligned}
\end{equation}
Here $\bm{1_{1*d}}$ denotes the $1*d$ all-ones vector. By this construction, the enclosure property $\mathcal{F} \subseteq \widetilde{\mathcal{F}}$ is guaranteed as follows.
\begin{proposition} $\mathcal{F}$ is enclosed by $\widetilde{\mathcal{F}}$, namely that $\mathcal{F} \subseteq \mathcal{\widetilde{F}}$. \label{basic_IR}
\end{proposition}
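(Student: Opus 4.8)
The plan is to show directly that every joint tensor in $\mathcal{F}$ induces a coordinate vector $\bm{\phi}=(\bm{\theta},\bm{\psi},\bm{\omega})$ satisfying both defining constraints of $\widetilde{\mathcal{F}}$, namely membership in $IR^{1}_{\bm{\Phi}}$ and in $IR^{2}_{\bm{\Phi}}$. Fix an arbitrary $f(y,\bm{U},\bm{W},\bm{X})\in\mathcal{F}$ and read off $\theta_i=f(y,u_i,X=x)$, $\psi_i=f(u_i,X=x)$, $\omega_i=f(u_i,X\neq x)$. Since these are joint probabilities, each of $\bm{\theta},\bm{\psi},\bm{\omega}$ is a non-negative vector; this non-negativity is what the whole argument hinges on.

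First I would recover the three Fredholm-type identities implied by the single-proxy structure of Fig.~\ref{Fig.sub.1}. Because $\bm{W}$ is a proxy depending only on $\bm{U}$, we have $f(w\mid u_i,y,x)=f(w\mid u_i)$, so marginalizing out $\bm{U}$ gives $f(y,\bm{W},X=x)=P(\bm{W}\mid\bm{U})\,\bm{\theta}$, and likewise $f(\bm{W},X=x)=P(\bm{W}\mid\bm{U})\,\bm{\psi}$ and $f(\bm{W},X\neq x)=P(\bm{W}\mid\bm{U})\,\bm{\omega}$ for the true transition matrix $P(\bm{W}\mid\bm{U})$. By compatibility with Ass.~\ref{ass_partial_bounded}, this matrix lies in $\mathscr{P}$, hence $\underline{P(\bm{W}\mid\bm{U})}\leq P(\bm{W}\mid\bm{U})\leq \overline{P(\bm{W}\mid\bm{U})}$ entrywise; write $\underline{P},\overline{P}$ for these bounds.

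The key step is the monotonicity of a matrix--vector product under entrywise matrix bounds: whenever $v\geq 0$ and $\underline{P}\leq P\leq\overline{P}$ entrywise, one has $\underline{P}\,v\leq P\,v\leq\overline{P}\,v$ entrywise. Applying this with $v=\bm{\theta}$ and $P\,\bm{\theta}=f(y,\bm{W},X=x)$ yields $\underline{P}\,\bm{\theta}\leq f(y,\bm{W},X=x)\leq\overline{P}\,\bm{\theta}$, which is exactly the pair of inequalities displayed in Formulation~\eqref{partial_observe}; the same argument applied to $\bm{\psi}$ and $\bm{\omega}$ supplies the two remaining blocks. Stacking the six resulting inequalities is precisely the defining condition of $IR^{1}_{\bm{\Phi}}$ in Eqn.~\eqref{IR_1}, so $\bm{\phi}\in IR^{1}_{\bm{\Phi}}$.

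Finally I would verify $\bm{\phi}\in IR^{2}_{\bm{\Phi}}$, which is immediate from the probability axioms: summing each coordinate over $\bm{U}$ gives $\bm{1_{1*d}}\,\bm{\theta}=f(y,X=x)$, $\bm{1_{1*d}}\,\bm{\psi}=f(X=x)$ and $\bm{1_{1*d}}\,\bm{\omega}=f(X\neq x)$, while non-negativity of every summand forces each $\theta_i,\psi_i,\omega_i$ into its stated box. Combining the two memberships gives $\bm{\phi}\in IR^{1}_{\bm{\Phi}}\cap IR^{2}_{\bm{\Phi}}=IR_{\bm{\Phi}}$, hence $f\in\widetilde{\mathcal{F}}$; since $f$ was arbitrary, $\mathcal{F}\subseteq\widetilde{\mathcal{F}}$. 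I expect the only genuinely delicate point to be the entrywise monotonicity argument together with its dependence on the non-negativity of $\bm{\theta},\bm{\psi},\bm{\omega}$; the remainder is bookkeeping. No tightness is asserted, and indeed the inclusion should be strict in general, because $\widetilde{\mathcal{F}}$ discards the exact Fredholm equalities in favor of entrywise bounds and constrains only the marginal vector $\bm{\phi}$ rather than the full tensor.
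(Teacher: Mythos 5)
Your proposal is correct and follows essentially the same route as the paper's own proof in Appendix~\ref{proof_basic_IR}: marginalize over $\bm{U}$ to obtain $f(\cdot,\bm{W},\cdot)=P(\bm{W}\mid\bm{U})\,v$ for $v\in\{\bm{\theta},\bm{\psi},\bm{\omega}\}$, apply the entrywise bounds of Ass.~\ref{ass_partial_bounded} together with non-negativity of $v$ to get membership in $IR^{1}_{\bm{\Phi}}$, and then note that the summation and box constraints of $IR^{2}_{\bm{\Phi}}$ are immediate from the probability axioms. You merely spell out the ``by integration'' step and the monotonicity of $P\mapsto Pv$ more explicitly than the paper does; there is no substantive difference.
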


The proof is shown in the Appendix.~\ref{proof_basic_IR}. Proposition.~\ref{basic_IR} provides the extension of the feasible region of $f(y, \bm{U}, \bm{W}, \bm{X})$ from $\mathcal{F}$ to $\mathcal{\widetilde{F}}$. On this basis, Formulation.~\ref{eqn_basic_bound} is relaxed as follows:

\begin{equation}
    \begin{aligned}
   \underline{f(Y_x = y)} = ~&  \text{min~} f(y,X=x) +  \sum_{i=1}^{d}\frac{1}{\psi_{i}}\theta_{i}\omega_{i}\\
    &\text{subject to:~} f(y,\bm{U},\bm{W}, \bm{X}) \in \mathcal{\widetilde{F}}, i.e., \bm{\phi} \in IR_{\bm{\Phi}}.
    \end{aligned} \label{formulation}
\end{equation}
Symmetrically, the optimal value is denoted as $\overline{f(Y_x = y)}$ for the maximum case. Moreover, the corresponding set of optimal solutions are denoted as $\bm{\Phi_{opt}}$. The following proposition discuss the validity and tightness of $\underline{f(Y_x = y)}$:
\begin{proposition}
The outcome $\underline{f(Y_x = y)}$ serves as the lower bound of $f(Y_x = y)$. Moreover, this bound is \textbf{tight} if and only if the following set is not empty:
\begin{equation}
    \begin{aligned}
       \{f(y,\bm{U},\bm{W},\bm{X}) : f(y,\bm{U},\bm{W},\bm{X})\in \mathcal{F} \text{~and is compatible with some~} \bm{\phi_{opt}}\in \bm{\Phi_{opt}}\} \neq \emptyset ,
    \end{aligned}\label{constraint_prove_tight}
\end{equation}
where $\bm{\phi_{opt}}$ is an element of the set $\bm{\Phi_{opt}}$. The maximum case $\overline{f(Y_x = y)}$ is symmetric. \label{proposition_tight}
\end{proposition}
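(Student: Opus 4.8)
The plan is to treat this as a standard relaxation-and-projection argument, exploiting the fact that the objective in Formulation~\eqref{formulation} depends on the full tensor $f(y,\bm{U},\bm{W},\bm{X})$ only through its projection $\bm{\phi} = (\bm{\theta},\bm{\psi},\bm{\omega})$, where $\theta_i,\psi_i,\omega_i$ are the marginals $f(y,u_i,X=x)$, $f(u_i,X=x)$, $f(u_i,X\neq x)$. First I would establish validity. The true data-generating distribution $f^{0}(y,\bm{U},\bm{W},\bm{X})$ satisfies Assumption~\ref{ass_partial_bounded}, hence $f^{0}\in\mathcal{F}$, and by Proposition~\ref{basic_IR} its associated vector $\bm{\phi}^{0}$ lies in $IR_{\bm{\Phi}}$. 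Since evaluating the objective at $\bm{\phi}^{0}$ returns the true quantity $f(Y_x=y)$ while $\underline{f(Y_x=y)}$ minimizes the same objective over the larger feasible set $IR_{\bm{\Phi}}$, I immediately obtain $\underline{f(Y_x=y)}\le f(Y_x=y)$, which is the lower-bound claim.

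For the tightness equivalence, I would first fix the meaning of \emph{tight}: the relaxed bound $\underline{f(Y_x=y)}$ is tight precisely when it coincides with the sharp lower bound obtained by minimizing the objective over the unrelaxed region $\mathcal{F}$ itself, equivalently, when $\underline{f(Y_x=y)}$ is attained by a genuine admissible distribution. With this convention the two directions are short. For the ``if'' direction, assume the set~\eqref{constraint_prove_tight} is nonempty and pick $f^{\star}\in\mathcal{F}$ whose projection is some $\bm{\phi_{opt}}\in\bm{\Phi_{opt}}$. Because $f^{\star}$ is an honest member of $\mathcal{F}$, the value of $f(Y_x=y)$ it induces equals the objective evaluated at $\bm{\phi_{opt}}$, namely $\underline{f(Y_x=y)}$; since $\mathcal{F}\subseteq\widetilde{\mathcal{F}}$ already forces $\inf_{\mathcal{F}}\ge\underline{f(Y_x=y)}$, this realization inside $\mathcal{F}$ shows the two infima coincide, so the bound is tight.

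For the ``only if'' direction, suppose the bound is tight, so the infimum of the objective over $\mathcal{F}$ equals $\underline{f(Y_x=y)}$ and is attained by some $f^{\star}\in\mathcal{F}$. Let $\bm{\phi}^{\star}$ be its projection; by Proposition~\ref{basic_IR}, $\bm{\phi}^{\star}\in IR_{\bm{\Phi}}$, and since its objective value equals the global relaxed minimum $\underline{f(Y_x=y)}$, it is optimal, i.e. $\bm{\phi}^{\star}\in\bm{\Phi_{opt}}$. Then $f^{\star}$ witnesses the nonemptiness of set~\eqref{constraint_prove_tight}. The maximum case $\overline{f(Y_x=y)}$ follows by replacing $\min$ with $\max$ throughout, with the inequalities reversed.

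The main obstacle is conceptual rather than computational: I must argue carefully that $\widetilde{\mathcal{F}}$ enlarges $\mathcal{F}$ only at the level of the full tensor and not automatically at the level of $\bm{\phi}$, so that an optimal $\bm{\phi_{opt}}$ produced by the relaxed program need not be the projection of any admissible tensor in $\mathcal{F}$ (the first-kind Fredholm constraints that define $\mathcal{F}$ but are dropped in $\widetilde{\mathcal{F}}$ may be violated). This is exactly the gap that the nonemptiness condition closes. A secondary technical point is attainment of the infimum in the ``only if'' direction; I would secure it by noting that $IR_{\bm{\Phi}}$ is carved out by the linear inequalities in~\eqref{IR_1} together with the simplex-type constraints in $IR^{2}_{\bm{\Phi}}$, and that although $\psi_i$ ranges over a half-open interval, the nonnegativity of all coordinates makes each ratio $\theta_i\omega_i/\psi_i$ blow up as $\psi_i\to 0^{+}$, so no minimizing sequence escapes to that boundary and a minimizer exists on the effectively compact sublevel set.
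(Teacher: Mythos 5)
Your proposal is correct and follows essentially the same route as the paper's own proof: validity via $\mathcal{F}\subseteq\widetilde{\mathcal{F}}$ and the fact that the objective depends only on the projection $\bm{\phi}$, and tightness by showing the minimum over $\widetilde{\mathcal{F}}$ is realized inside $\mathcal{F}$ exactly when some $\bm{\phi_{opt}}$ lifts to an admissible tensor. Your explicit handling of attainment in the ``only if'' direction (ruling out minimizing sequences with $\psi_i\to 0^{+}$) is a small point the paper's proof leaves implicit, but it is not a different argument.
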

Details are deduced in Appendix.~\ref{equal_reform_algorithm1}. 

\begin{remark}
As illustrated in the preliminaries, it is hard to theoretically guarantee the tightness of $\underline{f(Y_x = y)}$. However in practice, it is tight in many cases, since the Constraint~\eqref{constraint_prove_tight} is not hard to be satisfied. For instance, if we obtain the following observations and the partial observability:
\begin{equation}
    \begin{aligned}
        \left[ \begin{matrix}f(Y = y, \bm{W}, X=x)^T \\ f(Y = y, \bm{W}, X\neq x)^T  \\
        f(Y \neq y, \bm{W}, X=x)^T \\ f(Y \neq y, \bm{W}, X\neq x)^T
        \end{matrix}\right]
        = \left[\begin{matrix}
0.08 & 0.12\\ 0.15 & 0.1 \\ 0.18 & 0.12\\ 0.15 & 0.1 \\
\end{matrix}\right], \left[\begin{matrix}
\overline{P(\bm{W}\mid \bm{U})} \\ \underline{P(\bm{W}\mid \bm{U})}
\end{matrix}\right] = \left[ \begin{matrix}
0.6\bm{I_{2*2}} + 0.4\bm{J_{2*2}} \\ 0.6\bm{I_{2*2}} 
\end{matrix}\right].
    \end{aligned} \label{tight_example}
\end{equation}
Here $\bm{W}, \bm{U}, \bm{X}$ are all binary, and $\bm{I_{n*n}}, \bm{J_{n*n}}$ denote the $n-$dimensional identity matrix and all-ones matrix respectively. we can verify one of the optimal solutions $\bm{\phi_{opt}} = [0~ 0.2~ 0.3~ 0.2~ 0.5~ 0]^T$. The corresponding $f(y,\bm{U},\bm{W},\bm{X})$ satisfying Constraints~(\ref{constraint_prove_tight}) exists, whose explicit form is detailed in Appendix.~\ref{equal_reform_algorithm1} due to space limitation. Thus in this case the bound $\underline{f(Y_x = y)}$ is tight.
\end{remark}

We now aim to address the second challenge. That is, this fractional programming problem is still non-trivial since the invalidation of the convex-concave condition. With this reason, we adopt the \emph{difference-in-convex (DC)} decomposition strategy to formally describe how we reduce (\ref{formulation}) into a relaxed linear programming problem. 

For preparation, we do transformation of this fractional form. We define the knockoff $\bm{\psi^o}$ to replace the denominator and then introduce the $4d-$ dimensional vector $\bm{\gamma}$ :
\begin{equation}
    \begin{aligned}
    \bm{\psi^o} = (\psi_{1}^{o}, \psi_{2}^o,... \psi_{d})^T,~\bm{\gamma} = \left(\begin{matrix}
    (\bm{\psi^o})^T, \bm{\theta}^T, \bm{\psi}^T, \bm{\omega}^T
    \end{matrix} \right)^T, \text{where~} (\bm{\theta}, \bm{\psi}, \bm{\omega}) \text{~is copied from~} \bm{\phi}.
    \end{aligned}\label{definition_gamma}
\end{equation}
Then our objective function is equivalently transformed to
\begin{equation}
    \begin{aligned}
    \underline{f(Y_x = y)} = &\text{~min~} f(y ,X=x) + \sum_{i=1}^{d} \psi_{i}^{o} \theta_{i} \omega_{i} \\
    &\text{~subject to}: \bm{\gamma} \in IR_{{\Gamma}}, \text{where~} IR_{\Gamma} = \{\bm{\gamma}: \bm{\phi} \in IR_{{\Phi}},
    \psi_{i}^{o} \psi_{i}  = 1, i=1,...d.\}.\\
    \end{aligned}   \label{re-formulation}
\end{equation}

This is the final goal. However, it is still hard in practice in spite of implementing the knock-off trick. On the one hand, it is non convex and nonlinear both for the objective function and the constraints. On the other hand, we cannot just look for local optimal solutions, or else the bound $\underline{f(Y_x = y)}$ can not be guaranteed to be valid. By this motivation, we attempt to construct a weaker linear programming form to approximate the global optimal value of (\ref{re-formulation}). Our core idea is to apply the \emph{difference-in-convex (DC)} decomposition :
\begin{equation}
    \begin{aligned}
     \forall \bm{\gamma}, &\sum_{i=1}^{d}\psi_{i}^o \theta_{i} \omega_{i} = C_1(\bm{\gamma}) -C_2(\bm{\gamma}), \\
     & \psi_{i}^{o}\psi_{i} = D_{i1}(\bm{\gamma}) - D_{i2}(\bm{\gamma}), i=1, 2, \cdots, d, \label{DC_decomposition}
    \end{aligned}
\end{equation}
where $C_1(\bm{\gamma}), C_2(\bm{\gamma}), D_{i1}(\bm{\gamma}), D_{i2}(\bm{\gamma})$ \footnote{Note that the sub-script $cyc$ in~\eqref{dc_decomposition} is an abbreviation of cyclic sum following~\cite{du2012note}, which cycles through $\{\psi^o_i,\theta_{i},\omega_{i}\}$ in the corresponding function and take the sum. Taking $D_{i2}(\bm{\gamma})$ for instance, we have $\sum_{cyc}[\psi^o_i+\theta_i^2]^2 = [\psi^o_i+\theta_i^2]^2 + [\theta_{i}+\omega_{i}^2]^2 + [\omega_{i}+(\psi^o_{i})^2]^2$.} are all convex functions (see Appendix.~\ref{app_main_result_1}) satisfying that
{\begin{equation}
    \begin{aligned}
      &C_1(\bm{\gamma}) =\sum_{i=1}^{d} \frac{1}{6}(\sum\limits_{cyc} \psi^o_i )^3+\frac{1}{2}\sum\limits_{cyc}(\psi^o_i)^4 + \frac{1}{2}\sum\limits_{cyc}(\psi^o_i)^2,\\
    &C_2(\bm{\gamma}) = \sum_{i=1}^{d}  \frac{1}{6}\sum\limits_{cyc} (\psi^o_i)^3+\frac{1}{4}\sum_{cyc}[(\psi^o_i)^2+\theta_i]^2 + \frac{1}{4}\sum_{cyc}[\psi_i^o+\theta_i^2]^2,\\
    &D_{i1}(\bm{\gamma}) = \frac{1}{2}(\psi_i^o+\psi_i)^2,~
    D_{i2}(\bm{\gamma}) = \frac{1}{2}[(\psi^o_i)^2 + (\psi_i)^2].
    \end{aligned}\label{dc_decomposition}
\end{equation}}

Exploiting their convexity, we bound them by the following linear functions, which are constructed by secants and tangents of the original function: 
\begin{equation}
\begin{aligned}
C_1(\bm{\gamma}) -C_2(\bm{\gamma}) & \geq  C_1^{\text{tan}}(\bm{\gamma}) - C_2^{\text{sec}}(\bm{\gamma}) \\
 D_{i1}(\bm{\gamma}) - D_{i2}(\bm{\gamma}) & \in [D_{i1}^{\text{tan}}(\bm{\gamma}) - D_{i2}^{\text{sec}}(\bm{\gamma}), D_{i1}^{\text{sec}}(\bm{\gamma}) - D_{i2}^{\text{tan}}(\bm{\gamma})]
\end{aligned}
\end{equation}
For their explicit form solutions, we refer the readers to \eqref{tan_sec}.
This allows us to relax the original problem in~(\ref{re-formulation}) into the following linear program:
\begin{equation}
    \begin{aligned}
    &\text{min~}f(y ,X=x) + C_1^{\text{tan}}(\bm{\gamma}) - C_2^{\text{sec}}(\bm{\gamma}) \\
    &\text{subject to}: \bm{\phi} \in IR_{{\Phi}}, D_{i1}^{\text{tan}}(\bm{\gamma}) - D_{i2}^{\text{sec}}(\bm{\gamma}) \leq 1 , D_{i1}^{\text{sec}}(\bm{\gamma}) - D_{i2}^{\text{tan}}(\bm{\gamma}) \geq 1, i=1, 2, \cdots, d.
    \end{aligned}   \label{re-formulation_linear_weaker}
\end{equation}
It is clear that this shift causes the estimation error. In order to eliminate it in practice, we iteratively do DC within simplicial partitioned feasible regions. Details will be shown in the following section. 

In conclusion, we already address these two challenges in the preliminaries.

\subsection{Valid bound of ACE}
The identification region of $f(Y_x)$ is constructed as follows. 
\begin{equation}
    \begin{aligned}
    IR_{F(Y_x)} = \{f(Y_x): \int_{Y^L}^{Y^U}f({Y_x=y})dy =1, \forall y \in [Y^{L}, Y^{U}], f(y,\bm{U}, \bm{W}, \bm{X}) \in \mathcal{F} \}.
    \end{aligned}
\end{equation}
Then the valid bound of $ACE_{\bm{X} \rightarrow \bm{Y}}$ can be denoted as $[\underline{ ACE_{\bm{X} \rightarrow \bm{Y}}}, \overline{ ACE_{\bm{X} \rightarrow \bm{Y}}}]$:
\begin{equation}
    \begin{aligned}
    &\underline{ ACE_{\bm{X} \rightarrow \bm{Y}}} \leq \min\{ ACE_{\bm{X} \rightarrow \bm{Y}} = \int_{{X}^L}^{{X}^U} \int_{Y^L}^{Y^U} f(Y_x=y)\pi(x) dx dy: f({Y_x}) \in IR_{F(Y_x)}\},~\\
    &\overline{ ACE_{\bm{X} \rightarrow \bm{Y}}} \geq \max\{ ACE_{\bm{X} \rightarrow \bm{Y}} = \int_{X^{L}}^{X^{U}} \int_{Y^L}^{Y^U} f(Y_x=y)\pi(x) dx dy: f({Y_x}) \in IR_{F(Y_x)}\}.
    \end{aligned}\label{tight_bound_ace}
\end{equation}

$[\underline{ ACE_{\bm{X} \rightarrow \bm{Y}}}, \overline{ ACE_{\bm{X} \rightarrow \bm{Y}}}]$ is the valid bound of ACE. In our paper, we aim to design an algorithm to seek the valid bound of $f(Y_x = y)$, and then extend our strategy from bounding $f(Y_x = y)$ to bounding ACE. Homoplastically, we only need to consider the optimization technique on the minimum case, and the maximum case will be symmetric.

\section{Algorithm}\label{section_algorithm}
In this section, we showcase how to compute $\underline{f(Y_x = y)}$ in~\eqref{re-formulation} in practice. As illustrated above, since it corresponds to optimizing a non-convex function, new optimization techniques needs to be derived in order to find the global optimum. On this basis, we propose Partial Identification via Sum-of-ratios Fractional Programming (PI-SFP), which is a fractional programming based method that optimizes the objective via iterative approximation. More specifically, we first construct a simplex $S_0$ that encloses the feasible region of~\eqref{re-formulation}, then we use the simplex $S_0$ as an assistance to identify a lower bound of $\underline{f(Y_x = y)}$ via difference-in-convex (DC) decomposition strategy. Then in each iteration, we partition the original simplex $S_{0}$ into multiple simplices to help us fine tune the lower bound constructed in the initial step. The rest of the section is structured as follows. In Section~\ref{sec_framework}, we introduce the main framework of our algorithm. In particular, we divided the entire algorithm into four modules: 1) \textbf{Initialization()}, 2)~\textbf{Bisection()}, 3) \textbf{Bounding()}, and 4) \textbf{Global\textbf{\_error}()}. Then in Section~\ref{initialization_section}, we elaborate these modules in detail. For notational simplicity, we introduce the following symbols for algorithm description:
\begin{itemize}
\item For a simplex $S$, $dia(S) := \max_{s_1,s_2 \in S_{}}\|s_1 - s_2\|_2$ denotes its diameter\footnote{For simplicity, we use $\|\cdot \|$ to denote $\|\cdot \|_2$.}, and $S^i$ denotes its $i-$th supporting vector, $i=\{0,1,...4d\}$.
\item $\underline{f_{S}(Y_{x} = y)}$ denotes the optimal value of (\ref{re-formulation}) when its feasible region is strengthened to $\bm{\gamma} \in IR_{{\Gamma}} \cap S$.

\end{itemize}

\subsection{Framework of PI-SFP}\label{sec_framework}

The framework of PI-SFP to solve (\ref{re-formulation}) is as follows.

%
%

\begin{algorithm}[t]

\caption{Partial Identification via Sum-of-ratios Fractional Programming (PI-SFP).}
\LinesNumbered 
\KwIn{Observational distribution $f(y, \bm{W}, \bm{X})$, $\underline{P(\bm{W} \mid \bm{U})}$, $\overline{P(\bm{W} \mid \bm{U})}$, a prespecified error bound $\delta >0$.}
\KwOut{A lower bound estimate ${\underline{f^k_{opt}(Y_x = y)}}$. }

Let $k = 0$, construct an original simplex
\[S_0 = \textbf{Initialization} (f(y, \bm{W}, \bm{X}), \underline{P(\bm{W} \mid \bm{U})}, \overline{P(\bm{W} \mid \bm{U})}); \]

Calculate a lower bound of $\underline{f_{S_{0}}(Y_x = y)}$ via the $\textbf{Bounding}$ function:
\[\underline{\underline{f_{S_{0}}(Y_x = y)}} = \textbf{Bounding}(S_{0});\]

Set the collection of simplices at the $0$-th iteration as $\cS_0 = \{S_{0}\}$;

\While{${\text{PI-SFP}_{\text{error}}} \leq \delta$}{
	
Let \[\tilde{S}_k = \argmin_{S \in \cS_k} \underline{\underline{f_{S}(Y_x = y)}},\] where $\underline{\underline{f_{S}(Y_x = y)}}$ denotes the output of $\textbf{Bounding}(S)$ with input $S$;

Split  $\tilde{S}_k$ into two simplicies $\tilde{S}_{k1}$ and $\tilde{S}_{k2}$ via the $\textbf{Bisection}$ function:
\[
\tilde{S}_{k1}, \tilde{S}_{k2} = \textbf{Bisection} (\tilde{S}_k)
\]
and set
\[
\cS_{k + 1} = \left(\cS_k \setminus \tilde{S}_k \right) \cup \{\tilde{S}_{k1}, \tilde{S}_{k2}\};
\]

Calculate the estimation error bound via
\[{\text{\emph{PI-SFP}}_{\text{\emph{error}}}} = \textbf{Global\textbf{\_error}}(\tilde{S}_0, \tilde{S}_1, \cdots, \tilde{S}_{k+1});\]

Set $k = k + 1$;
} 

Return $\underline{f^k_{opt}(Y_x = y)} = \max\limits_{i\in \{0,1,...k\}} \underline{\underline{f_{\tilde{S}_i}(Y_x = y)}}$.\label{main_alg}
\end{algorithm}

\textbf{Step~1} is for pre-processing. Using function $\textbf{Initialization()}$, a baseline simplex $S_0$ is constructed to enclose the original feasible region. i.e., $IR_{\Gamma} \subseteq S_0$, and thus $\underline{f(Y_x = y)} = \underline{f_{S_0}(Y_x = y)}$ (see lemma.~\ref{lemma_enclose} in Appendix.~\ref{app_main_result_1}). This equivalent transformation allows us to compute $\underline{f(Y_x = y)}$ via $\underline{f_{S_0}(Y_x = y)}$. Adopting the DC decomposition strategy as in~\eqref{DC_decomposition}-\eqref{re-formulation_linear_weaker}, in \textbf{Step~2} we find a lower bound of $\underline{f_{S_0}(Y_x = y)}$, namely $\underline{\underline{f_{S_0}(Y_x = y)}}$. 

Then in \textbf{Steps 4-9}, we apply a bisection like approach to iteratively partition $S_0$ into a set of simplices $\cS_k$ (in $k$-th iteration) and then reapply the above DC decompsition strategy to new simplices for more accurate estimate. Once  bounding error calculated by \textbf{Global\_error()} reaches the prespecified threshold $\delta$, we stop and return the lower bound estimate in \textbf{Step 10}. Otherwise, we reiterate this step and make more delicate partitions.

\subsection{Internal functions of PI-SFP}
\label{initialization_section}

In this section, the above four functions are illustrated in detail.
\\ \quad \\
\noindent $\large{\textbf{1) Initialization()}}$: This function is to construct an original simplex $S_0$ to enclose the feasible region $IR_{\Gamma}$. Motivated by~\cite{horst2000introduction, pei2013global}, we construct $S_0$ using that:
\begin{equation}
    \begin{aligned}
     S_0 = \{ \bm{\gamma} = (\gamma_1, \cdots, \gamma_{4d}): \sum_{i=1}^{4d} \gamma_i \leq \alpha, \gamma_i \geq \gamma_i^l, \forall i = 1,2, \cdots, 4d. \}, 
    \end{aligned}\label{initialization_S00}
\end{equation}
where $\gamma^{l}_i := \min\limits_{\bm\gamma \in IR_{\Gamma}} \gamma_i$; and $\alpha$ is set as
\begin{equation}
	\begin{aligned}
		\alpha :=  1 + f(y, X=x) + 
		\frac{d^2 (\psi^{l}+ \psi^{u})^2 }{4f(X=x) \psi^{l} \psi^{u}  },
	\end{aligned}
\end{equation}
with
\begin{equation}
\left[\begin{matrix}
	\psi^{l}, \psi^{u}
\end{matrix}\right] := \left[  \begin{matrix}
	\min\limits_{i \in [2d+1,3d]} \gamma_{i}^l,  \max\limits_{i \in [2d+1,3d]} \gamma_{i}^u 
\end{matrix}\right] \qquad\text{and}\qquad \gamma^{u}_i := \max\limits_{\bm\gamma \in IR_{\Gamma}} \gamma_i.
\label{identify_psi}
\end{equation}

The justification of such construction is given in lemma.~\ref{lemma_enclose}.

\quad \\
 \noindent $\large{\textbf{2) Bisection()}}$: This function is to partition an input simplex $S$ into two simplices $S_1, S_2$. Motivated by~\cite{rivara1984mesh}, we adopt the longest-edge (LE) bisection strategy to create the partition. The details are given in Algorithm.~\ref{alg_bisection}.

\begin{algorithm}[t]
\caption{Recursive procedure to split the simplicial partitions (\textbf{Bisection()}).}
\LinesNumbered
\KwIn{Simplex $S$ with vertices $\{S^0, \cdots, S^{4d}\}$.}
\KwOut{Two new simplices $S_1, S_2$.}

Set $S^{t_1}, S^{t_2}$ as the vertices incident to the longest edge of the simplex $S$, i.e., 
\[
\{t_1, t_2\} = \argmax_{\{a, b\} \in \{0, 1, \cdots, 4d\}} \|S^a - S^b\|_2;
\]

Construct $S_1, S_2$ based on the following two sets of vertices:
\[
\{S^{0}, \cdots, S^{t_1 - 1}, v, S^{t_1 + 1}, \cdots, S^{4d}\}, \quad \{S^{0}, \cdots, S^{t_2 - 1}, v, S^{t_2 + 1}, \cdots, S^{4d}\},
\]
where $v$ corresponds to the midpoint the longest edge. Return $S_1,  S_2$.\label{alg_bisection}
\end{algorithm}


\quad~\\
\noindent $\large{\textbf{3) Bounding()}}$: This function aims to derive a lower bound of $\underline{f_S(Y_x = y)}$ with the input $S$, which is also the most important component of this algorithm. Recall that $\underline{f_S(Y_x = y)}$ can be expressed as the solution of the optimization program~\eqref{re-formulation} with an additional constraint $\bm{\gamma} \in S$. Then using the derivations (\ref{DC_decomposition})-(\ref{re-formulation_linear_weaker}), it is straightforward that we can derive a lower bound of $\underline{f_S(Y_x = y)}$ via solving the following optimization problem\footnote{Notice that we should first verify $IR_{\Gamma} \cap S \neq \emptyset$. Otherwise we can directly set $\underline{\underline{f_S(Y_x = y)}} = +\infty$. }:
\begin{equation}
    \begin{aligned}
     \underline{\underline{f_{S_{}}(Y_{x} = y)}}= &\text{~min~} f(y,X=X) + C_1^{\text{tan}}(\bm{\gamma}) - C_2^{\text{sec}}(\bm{\gamma}) \\
&\text{~subject to:~} \bm{\phi} \in IR_{{\Phi}},\bm{\gamma} \in S_{}; \\
&~~~~~~~~~~~~~~~~~~D_{i1}^{\text{tan}}(\bm{\gamma}) - D_{i2}^{\text{sec}}(\bm{\gamma}) \leq 1, D_{i1}^{\text{sec}}(\bm{\gamma}) - D_{i2}^{\text{tan}}(\bm{\gamma}) \geq 1, i=1,2,...d,
\label{lemma_sub_linear}
    \end{aligned}
\end{equation}
where as demonstrated in lemma.~\ref{tan_sec_bounded} of Appendix.~\ref{app_main_result_1}, the functions $C_1^{\text{tan}}(\bm{\gamma})$, $C_2^{\text{sec}}(\bm{\gamma})$, $D_{i1}^{\text{tan}}(\bm{\gamma})$, $D_{i2}^{\text{sec}}(\bm{\gamma})$, $D_{i1}^{\text{sec}}(\bm{\gamma})$ and $D_{i2}^{\text{tan}}(\bm{\gamma})$ are constructed from $C_1(\bm{\gamma}), C_2(\bm{\gamma})$, $D_{i1}(\bm{\gamma}), D_{i2}(\bm{\gamma})$'s in~\eqref{dc_decomposition} based on \textbf{sec}ants and \textbf{tan}gents within the simplex $S$: 
\hspace{-2cm}\begin{equation} 
    \begin{aligned}
    \left[\begin{matrix}C_k^{\text{tan}}(\bm{\gamma})\\ D^{\text{tan}}_{ik}(\bm{\gamma}) \end{matrix} \right] :&= {\left[ \begin{matrix}
   C_k(\bm{\gamma_0}) \\ D_{ik}(\bm{\gamma_0})
   \end{matrix} \right] + \left[\begin{matrix}
   \frac{\partial C_k (\bm{\gamma})}{\partial \bm{\gamma}}\mid_{\bm{\gamma} = \bm{\gamma_0}} \\ \frac{\partial D_{ik} (\bm{\gamma})}{\partial \bm{\gamma}}\mid_{\bm{\gamma} = \bm{\gamma_0}}
   \end{matrix}\right](\bm{\gamma}-\bm{\gamma_0})},~k=1,2, \forall \bm{\gamma_0} \in S,\\
   \left[\begin{matrix}
   C_k^{\text{sec}}(\bm{\gamma}) \\ D^{\text{sec}}_{ik}(\bm{\gamma})
   \end{matrix}\right] :&= {\left[\begin{matrix}
   C_k(S^0),... C_k(S^{4d}) \\
   D_{ik}(S^0), ... D_{ik}(S^{4d})
   \end{matrix}\right] \left[\begin{matrix} S^0, ... , S^{4d} \\ 1,...,1 \end{matrix}\right]^{-1}\left[\begin{matrix}\bm{\gamma} \\ 1\end{matrix}\right]},~k=1,2. \\ \label{tan_sec}
   \end{aligned}
   \end{equation}
From above,~\eqref{lemma_sub_linear} is fully a linear programming problem, which can be solved by a wide variety of solutions, e.g. simplex algorithm~\cite{klee1972good}, interior algorithm~\cite{kojima1989primal, nesterov1994interior}.

~\\ \quad \\
\noindent $\large{\textbf{4) Global\_error()}}$: This function is to terminate PI-SFP via estimating the order of the error with respect to $n$. Recall that in \textbf{Step~5} of Algorithm.~1, we always select the $\tilde{S}_k$ with the lowest $\underline{\underline{f_S(Y_x = y)}}$ in the $k-$th iteration. This strategy guarantees (see Appendix.~\ref{app_main_result_1} for more details)
\begin{equation}
    \begin{aligned}
        \underline{\underline{f_{\tilde{S}_k}(Y_x = y)}} \leq \min_{S \in \cS_k}\underline{f_S(Y_x = y)} = \underline{f(Y_x = y)},
    \end{aligned}\label{bound_each_iteration}
\end{equation}
i.e., all the $\underline{\underline{f_{\tilde{S}_k}(Y_x = y)}}$'s are lower bounds of $\underline{f(Y_x = y)}$, and thus $\underline{f^n_{opt}(Y_x = y)} \leq \underline{f(Y_x = y)}$. From this, we further have that, in the $n$-th iteration, for any $k \in \{0, \cdots, n\}$,
\begin{equation}
    \begin{aligned}
       0 \leq \underline{f(Y_x = y)} - \underline{f^n_{opt}(Y_x = y)}
       \leq \min_{S \in \cS_k} \underline{f_S(Y_x = y)} - \underline{\underline{f_{\tilde{S}_k}(Y_x = y)}} \leq 
        {\underline{f^{{}}_{\tilde{S}_k}(Y_x = y)}} - \underline{\underline{f_{\tilde{S}_k}(Y_x = y)}}. 
    \end{aligned}\label{shrinking_diameter}
\end{equation}
Also see Appendix.~\ref{app_main_result_1} for details. This allows us to calculate an error bound via targeting  
\begin{equation}
\min_{0 \leq k \leq n}\left\{\underline{f^{{}}_{\tilde{S}_k}(Y_x = y)} - \underline{\underline{f_{\tilde{S}_k}(Y_x = y)}}\right\}.\label{eq:tildesbnd}
\end{equation}
Since the bound of $\underline{f^{{}}_{\tilde{S}_k}(Y_x = y)} - \underline{\underline{f_{\tilde{S}_k}(Y_x = y)}}$ is dominated by the diameter of the simplex $\tilde{S}_k$, i.e., $dia(\tilde{S}_k)$, we aim to get an order of \eqref{eq:tildesbnd} based on the order of the smallest $dia(\tilde{S}_k)$ with respect to $n$. As shown in Eqn.~(\ref{second_bounded_result}) in Appendix.~\ref{app_main_result_1}, this order is controlled by the length $L_n$ of the longest nested subsequence of $\{\tilde{S}_k\}_{k=0}^{n}$, which gives us Algorithm.~3.

\begin{algorithm}[t]
\caption{Procedure to estimate the current convergence (\textbf{Global\textbf{\_error}()}).}
\LinesNumbered
\KwIn{Collections of simplex partitions in each iteration till $n-$th iteration: $\tilde{S}_0, \cdots, \tilde{S}_n$.}
\KwOut{An estimate of the global error.}

Let $\{\tilde{S}_{i_k}\}_{k=1}^{L_n}$ be the (longest) subsequence of $\{\tilde{S}_k\}_{k=0}^n$ such that each $\tilde{S}_{i_{j + 1}}$ is partitioned from $\tilde{S}_{i_j}$ for $j=0, 1, \cdots, L_n-1$, where $L_n$ is the length of this subsequence;

Return the global error estimate $(\frac{\sqrt{3}}{2})^{\lfloor \frac{L_n}{4d}\rfloor}$.
\end{algorithm}

\section{Theoretical analysis}

This section investigates the theoretical property of PI-SFP. We first explore the general converging rate of PI-SFP with respect to $L_n$ (Theorem.~\ref{convergence_theorem}). Then we show that PI-SFP can be extended from calculating $\underline{f(Y_x = y)}$ to the general ACE case.

For preparation, we should ensure that $IR_{\Gamma}$ is bounded, so that $dia(S_0)<+\infty$, and then we can split it into sufficient small partitions for further estimation. For this goal, we introduce the following positive definite assumption:

\begin{assumption}{\textbf{(boundedness)}}\label{positive definite assumption}
$\mathscr{P}$ is a set of $P(\bm{W} \mid \bm{U})$ guaranteeing each compatible solution $P(\bm{U}, \bm{X}=x)$ to be positive definite. Namely, $\exists \delta > 0$, such that $\forall \bm{\phi}=(\bm{\theta}, \bm{\psi}, \bm{\omega}) \in IR_{\bm{\phi}}$, we have $\bm{\psi} \geq \delta * \bm{1}_{1*d}>\bm{0}_{1*d}$.
\end{assumption}

\begin{remark}
Note that it is a fairly broad and reasonable assumption in practice, just in order to ensure that the denominator in \eqref{formulation} is not too small to facilitate the calculation. Under this assumption, we have $\psi_i^o < \frac{1}{\delta}$ in \eqref{re-formulation} and $\psi^l >\delta$ in \eqref{identify_psi}. Hence we have $sup_{\gamma \in IR_{\Gamma}}\|\bm{\gamma}\|_{+\infty} < +\infty$ and $dia(S_0)<+\infty$ respectively. 

Moreover, this assumption is proposed only to facilitate further elaboration of the fundamental properties of PI-SFP. It does not strictly limit the scope of its application. Indeed, we can generalise PI-SFP so that it is applicable to situations where this assumption does not hold. We refer the readers to Appendix.~\ref{discussion_ass} for more details.

\end{remark}

On this basis, we formally collate the previous analysis as our first main result:

\begin{theorem}

    Under Ass.~\ref{ass_partial_bounded}--\ref{positive definite assumption}, PI-SFP concentrates around the target value $\underline{f(Y_x = y)}$ at the $O((\frac{3}{4})^{\lfloor\frac{L_n}{4d}}\rfloor)$ rate. Specifically, 
    \begin{equation}
        \begin{aligned}
            &\mid \underline{f^{n}_{opt}(Y_x = y)} - \underline{f(Y_x = y)} \mid  \leq  A   ( \frac{3}{4})^{\lfloor \frac{L_n}{4d} \rfloor}  dia(S_0)^2,\\
             \end{aligned}
    \end{equation}
   $A = \max\limits_{\bm{\gamma} \in S_{0}} \frac{2(\sqrt{2}+1)\sqrt{d}}{\delta} \|\frac{\partial{(C_1(\bm{\gamma}) - C_2(\bm{\gamma}))}}{\partial{\bm{\gamma}}}\|  + \max\limits_{\bm{\gamma} \in S_{0}} \|\frac{\partial^2 C_1(\bm{\gamma)}}{\partial \bm{\gamma}^2}\|_F +  \frac{1}{2} \max\limits_{\bm{\gamma} \in S_{0}} \|\frac{\partial^2 C_2(\bm{\gamma)}}{\partial \bm{\gamma}^2} \|_F < +\infty$. Here $\|\cdot \|$ denotes the Euclidean norm, and $\|\cdot \|_F$ denotes the Frobenius norm. $L_n\in [\lfloor log(n)\rfloor+1, n]$ is the length of the longest nested sequence till $n-$th iteration. Moreover, $\lim\limits_{n\rightarrow +\infty} \underline{f_{opt}^{n}(Y_x = y)} = \underline{f(Y_x = y)}$.
\label{convergence_theorem}
\end{theorem}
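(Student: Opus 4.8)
The plan is to reduce the global statement to a per-simplex approximation estimate and then control the approximation error and the diameter decay separately. The entry point is the squeeze already recorded in \eqref{bound_each_iteration}--\eqref{shrinking_diameter}: for every $k\in\{0,\dots,n\}$ one has $0 \le \underline{f(Y_x = y)} - \underline{f^n_{opt}(Y_x = y)} \le \underline{f_{\tilde{S}_k}(Y_x = y)} - \underline{\underline{f_{\tilde{S}_k}(Y_x = y)}}$, so it suffices to make the right-hand side small for one good index $k$. I would take $k$ to be the terminal index $i_{L_n}$ of the longest nested subsequence identified in \textbf{Global\_error()}, because that simplex has undergone the most successive bisections and therefore has the smallest diameter.

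Next I would prove the per-simplex bound $\underline{f_{\tilde{S}_k}(Y_x = y)} - \underline{\underline{f_{\tilde{S}_k}(Y_x = y)}} \le A\, dia(\tilde{S}_k)^2$. Recall that $\underline{f_{\tilde{S}_k}(Y_x=y)}$ solves \eqref{re-formulation} restricted to $\bm{\gamma}\in IR_{\Gamma}\cap\tilde{S}_k$, whereas $\underline{\underline{f_{\tilde{S}_k}(Y_x=y)}}$ solves the linearized surrogate \eqref{lemma_sub_linear}. The gap has two sources. The objective is replaced by $C_1^{\text{tan}}-C_2^{\text{sec}}$, and since $C_1,C_2$ are convex the tangent under-estimate and the secant over-estimate each deviate from the true value by at most a constant multiple of a Hessian Frobenius norm times $dia(\tilde{S}_k)^2$; this produces the terms $\max\|\partial^2 C_1/\partial\bm{\gamma}^2\|_F + \tfrac12\max\|\partial^2 C_2/\partial\bm{\gamma}^2\|_F$ of $A$. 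In parallel, the equalities $\psi_i^o\psi_i=1$ are loosened to the two-sided constraints of \eqref{re-formulation_linear_weaker}, whose slack is again $O(dia(\tilde{S}_k)^2)$ by the secant--tangent gap of $D_{i1},D_{i2}$ in \eqref{dc_decomposition}.

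The second source is the delicate one, and I expect it to be the main obstacle. A point that is optimal for the relaxed surrogate may violate $\psi_i^o\psi_i=1$, so to compare the two optima I would project it back onto the true feasible set by perturbing only the knockoff coordinates $\psi_i^o$. Under the boundedness Assumption~\ref{positive definite assumption} we have $\psi_i\ge\delta$, so each correction is of size $O(dia(\tilde{S}_k)^2/\delta)$ and the aggregate correction across the $d$ coordinates has norm $O(\sqrt{d}\,dia(\tilde{S}_k)^2/\delta)$; multiplying by the Lipschitz constant $\max\|\partial(C_1-C_2)/\partial\bm{\gamma}\|$ of the true objective gives the first term of $A$, with the factor $2(\sqrt{2}+1)$ tracking the exact secant--tangent constant. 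Establishing this sensitivity estimate rigorously --- ensuring the back-projected point stays in $IR_{\Gamma}$ and quantifying the objective change --- is the technically demanding step.

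Finally I would assemble the pieces. For the diameter decay I invoke the geometry of longest-edge bisection from \textbf{Bisection()}: a full round of $4d$ successive longest-edge bisections contracts the diameter by at least $\sqrt{3}/2$, so $dia(\tilde{S}_{i_{L_n}}) \le (\sqrt{3}/2)^{\lfloor L_n/4d\rfloor} dia(S_0)$ with $dia(S_0)<+\infty$ by the remark after Assumption~\ref{positive definite assumption}. Squaring turns $\sqrt{3}/2$ into $3/4$, and combining with the per-simplex bound at $k=i_{L_n}$ yields $|\underline{f^n_{opt}(Y_x = y)} - \underline{f(Y_x = y)}| \le A(3/4)^{\lfloor L_n/4d\rfloor} dia(S_0)^2$. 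The range $L_n\in[\lfloor\log n\rfloor+1,n]$ follows because each bisection lengthens the deepest chain by at most one (upper bound) while the $n$ selected simplices populate a binary bisection tree of at most $2^{L_n+1}-1$ nodes (lower bound, by counting); since $L_n\to\infty$ the bound vanishes, giving $\lim_{n\to\infty}\underline{f^n_{opt}(Y_x = y)} = \underline{f(Y_x = y)}$.
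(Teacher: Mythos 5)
Your proposal follows essentially the same route as the paper's proof: the squeeze from \eqref{shrinking_diameter} evaluated at the terminal simplex $\tilde{S}_{i_{L_n}}$ of the longest nested chain, a per-simplex estimate $\underline{f_{S}(Y_x=y)}-\underline{\underline{f_{S}(Y_x=y)}}\leq A\,dia(S)^2$ decomposed exactly as in lemma.~\ref{second_bounded} (Taylor bound for the tangent, a convexity/secant bound contributing the $\tfrac12\|\partial^2 C_2/\partial\bm{\gamma}^2\|_F$ term, and a $\delta$-controlled correction of the knockoff coordinates $\psi_i^o$ for the violated equality constraints, which the paper formalizes via the auxiliary relaxed program \eqref{auxiliary-formulation}), followed by Kearfott's longest-edge bisection contraction and the $L_n\geq\lfloor\log n\rfloor+1$ counting argument. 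The step you flag as technically demanding is precisely where the paper's proof does its heaviest work, so the plan is faithful to the actual argument.
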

Theorem.~\ref{convergence_theorem} states that PI-SFP converges to $\underline{f(Y_x = y)}$ with the growing length of the longest nested sequence, and will approach it in the infinite case. We relegate the proof to Appendix.~\ref{app_main_result_1} and reserve a brief summary. First, $\underline{f(Y_x = y)}$ is equal to $\underline{f_{S_{0}}(Y_x = y)}$ via
constructing an original enclosure $S_0$ in \eqref{initialization_S00}. Second, $\underline{f_{S_{0}}(Y_x = y)}$ is substituted with $\min_{S\in \cS_k}\underline{f_{S}(Y_x = y)}$ in the $k$-th iteration by bisection. Third, each $\underline{f_{S}(Y_x = y)}$ is lower bounded by \eqref{lemma_sub_linear}, namely we have $\forall S \in \cS_{k}, {\underline{f_S(Y_x = y)}} \geq \underline{\underline{f_S(Y_x = y)}}$. Finally,
$\tilde{S}_k$ with the lowest bound $\min\limits_{S\in \cS_k}\underline{\underline{f_S(Y_x = y)}}$ is gathered as $\{\tilde{S}_k\}_{k=0}^n$ in order to formulate $\underline{f^n_{opt}(Y_x = y)}$ (see \textbf{Step~10} in Algorithm.~1). The asymptotic error can be bounded by \eqref{shrinking_diameter}-\eqref{eq:tildesbnd}. In conclusion, these four steps correspond to the four functions in the above section in order.

\begin{remark}{(Discussion on the growing rate of $L_n$ w.r.t. $n$)} The worst case is $L_n = \lfloor log(n) \rfloor +1$. In this scenario, PI-SFP will be equivalent to the method of exhaustion which shows a rather slow polynomial convergence as $O(n^{-\alpha})$ by Theorem.~\ref{convergence_theorem}, where $\alpha = \frac{1}{4d}log( \frac{{2}}{\sqrt{3}})$. However, empirically, this case is rare. In the simulation part, the convergence rate is faster than $O(n^{-\alpha})$. It can also be enhanced by some pruning strategy which will be discussed in the Section.~\ref{section_dis_ext}.

Despite this empirical observation, it is well beyond the scope of this paper to theoretically estimate $L_n$ w.r.t $n$. During iteration, each optimal solution (converging point) may be covered by increasing number of nested sequences\footnote{To guarantee each converging point is covered by finite partitions, we should resort to the regularity condition of simplices (identified in~\cite{ciarlet2002finite}). However, whether LE bisection can promise a family of regular partitions is still an open problem (\cite{korotov2016longest}) to be solved. }. These sequences possess different lengths and are difficult to estimate. More seriously, the number of optimal solutions is not necessarily finite either, namely we do not guarantee $|\bm{\Phi}_{opt}| < +\infty$.

\end{remark}

\begin{remark}{(Extension of bounding the ACE)} Taking advantage of PI-SFP, we can further achieve the valid bound of $ACE_{\bm{X} \rightarrow \bm{Y}}$. The above PI-SFP algorithm is to seek $\underline{f(Y_x = y)}$ when $X=x$ is fixed. We do further extension to consider all values of $\bm{X}$ simultaneously. In this sense, we reorganize (\ref{eqn_basic_bound}) to bound ACE (Definition.~\ref{def_ACE}) as follows:
\begin{equation}
    \begin{aligned}
    &\text{min~} \sum_x \pi(x) \int_{Y^L}^{Y^U} yf(y,X=x) dy +  \sum_x \pi(x) \sum_{i=1}^{d} \frac{ \left(\int_{Y^L}^{Y^U}yf(y,u_i,X=x)dy\right) f(u_i, X\neq x)}{f(u_i,X=x)}\\  &\text{subject to: $f(y,\bm{U},\bm{W}, \bm{X}) \in \mathcal{F}$}.
    \label{eqn_basic_bound_ace}
    \end{aligned}
\end{equation}
Using the same strategy as in Section.~\ref{framework}-\ref{section_algorithm}, we can achieve the valid bound of ${ ACE_{\bm{X} \rightarrow \bm{Y}}}$ in~(\ref{tight_bound_ace}). Due to the space limitation, we summarize it in the following corollary and deduce these details in Appendix.~\ref{corollary_ACE_appendix}.
\end{remark}

\section{Simulations}
In this section, we do simulations to illustrate the effectiveness of PI-SFP.

We focus on fig.~\ref{Fig.sub.1} and generalise the case \eqref{tight_example} in the introduction part. We consider an interesting and general situation called 'information leakage', namely the information of $\bm{U}$ is regularly retained by $\bm{W}$ but suffers loss in transmission. Formally, we claim 
\begin{equation}
    \begin{aligned}
    P(W=w_i \mid U=u_i)\geq 1-\epsilon, \epsilon \in [0,0.5]. 
    \end{aligned}
\end{equation}

To make the experiment simple and representative, we consider the binary cases of $\bm{W},\bm{U},\bm{X}$. On this basis, the construction is as follows:
\begin{equation}
    \begin{aligned}
         \left[\begin{matrix}
\overline{P(\bm{W}\mid \bm{U})} \\ \underline{P(\bm{W}\mid \bm{U})}
\end{matrix}\right] = \left[ \begin{matrix}
(1-\epsilon)\bm{I_{2*2}} + \epsilon \bm{J_{2*2}} \\ (1-\epsilon) \bm{I_{2*2}} 
\end{matrix}\right], \epsilon \in [0,0.5].
    \end{aligned} \label{initial_observe}
\end{equation}

The construction of $f(\bm{Y},\bm{W},\bm{X})$ still follows \eqref{tight_example}. In order to avoid the ill-conditioned case for PYTHON $3.8.5$, we make a rather broad restriction that elements of $P(\bm{U}, X=x)$ are at least $1e^{-2}$ in all cases. Moreover, we set the iteration number as $1000$.
\begin{figure}[h]
    \centering
    \includegraphics[width = 6cm, height = 4cm]{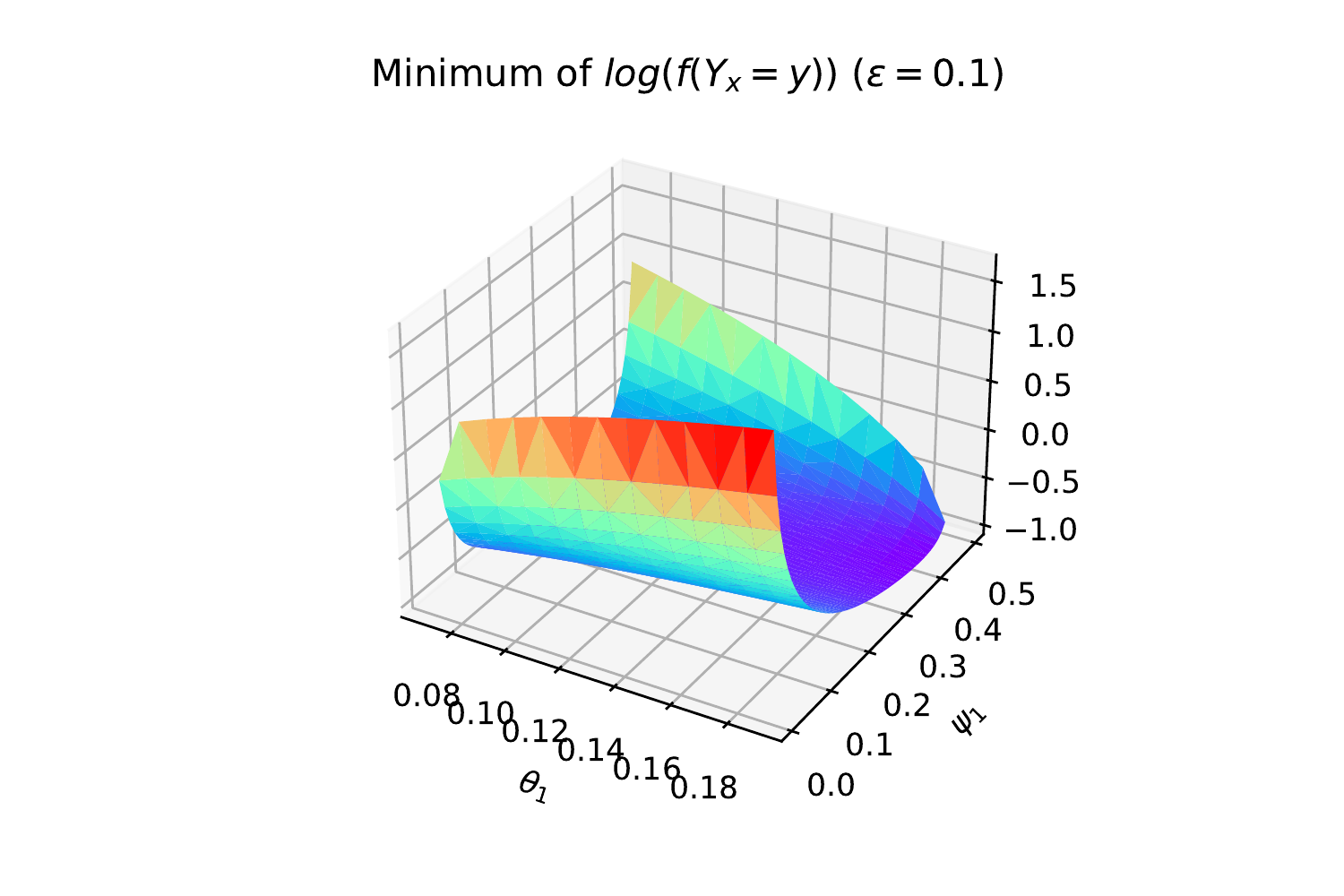}
    \includegraphics[width = 6cm, height = 4cm]{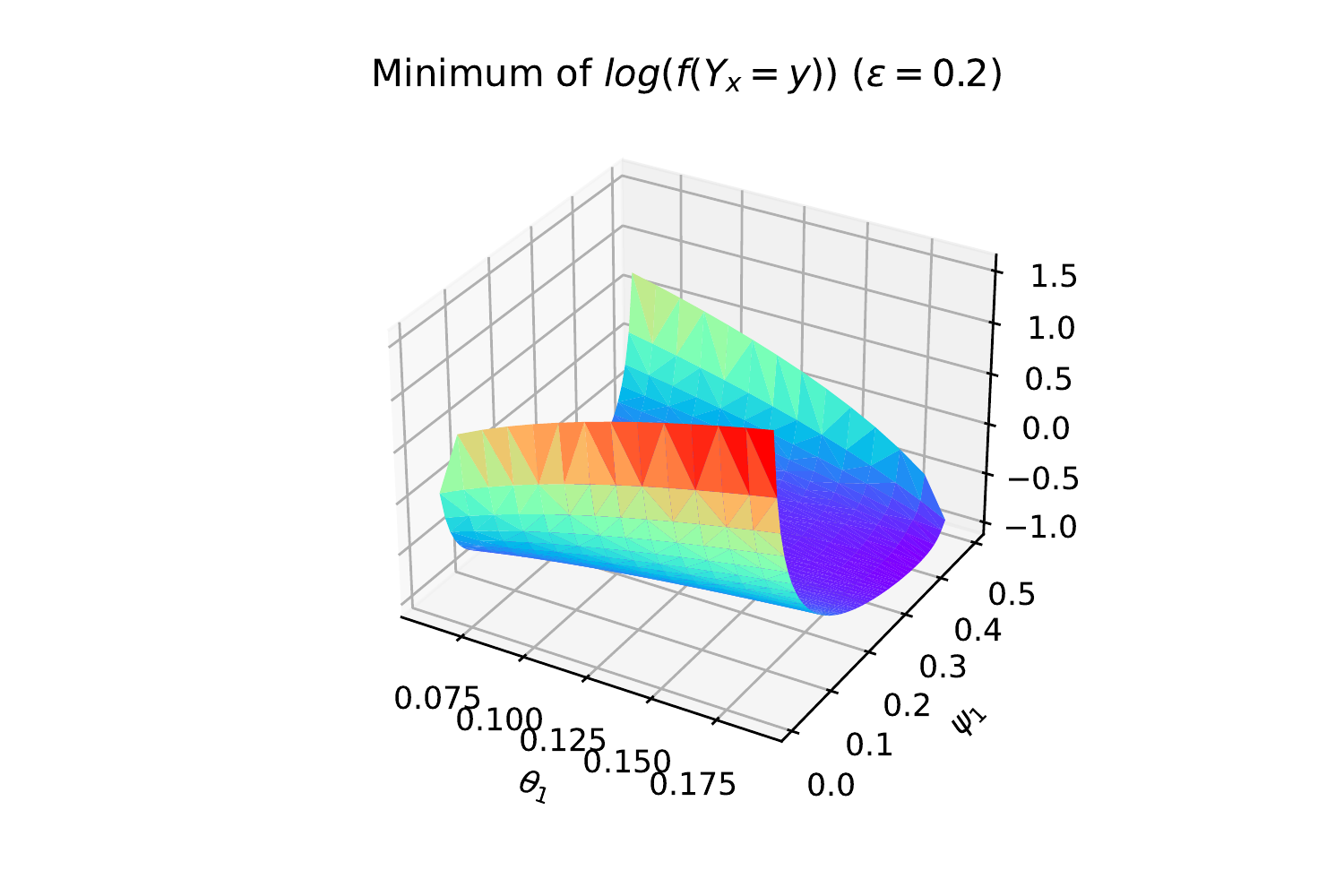}
    \includegraphics[width = 6cm, height = 4cm]{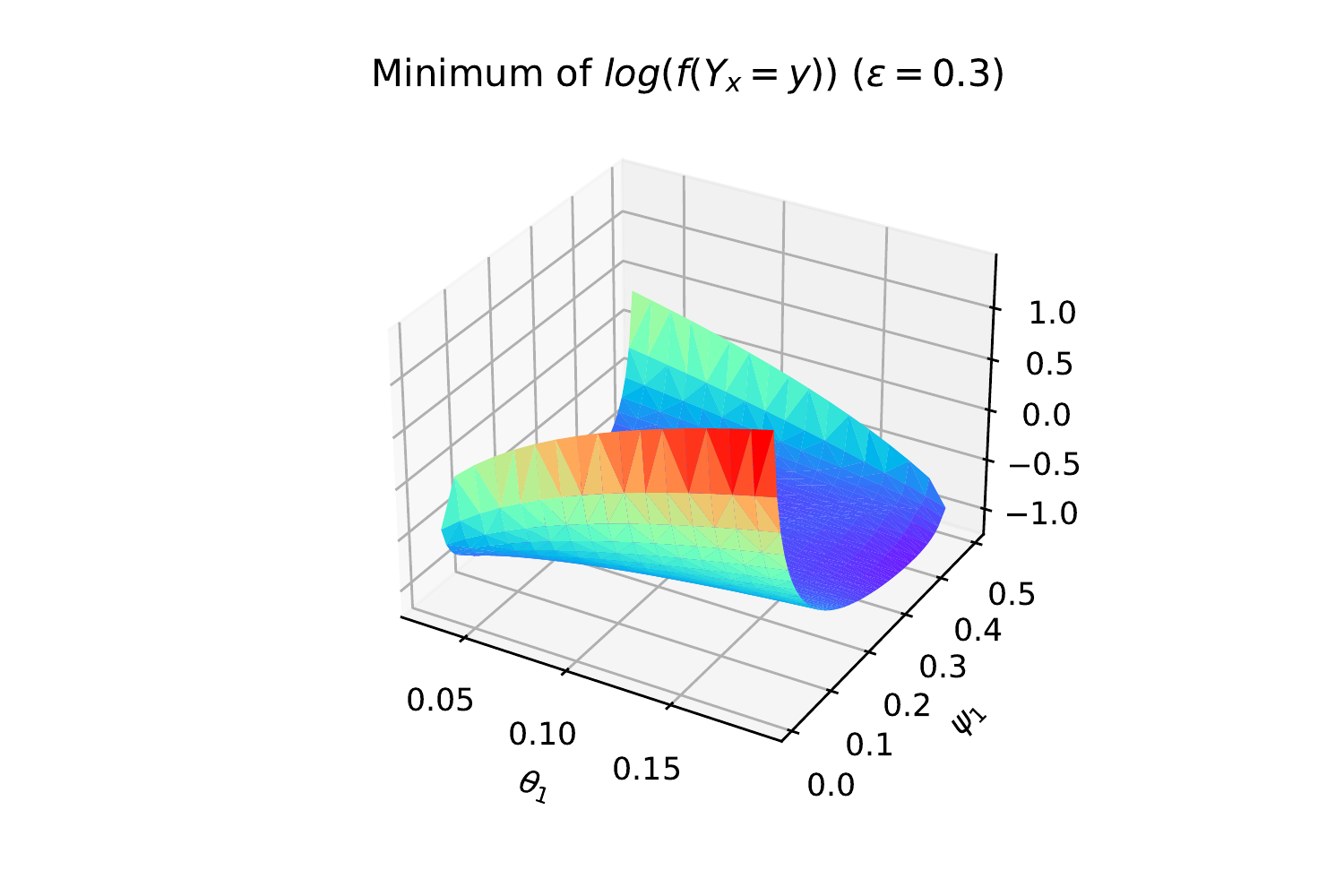}
    \includegraphics[width = 6cm, height = 4cm]{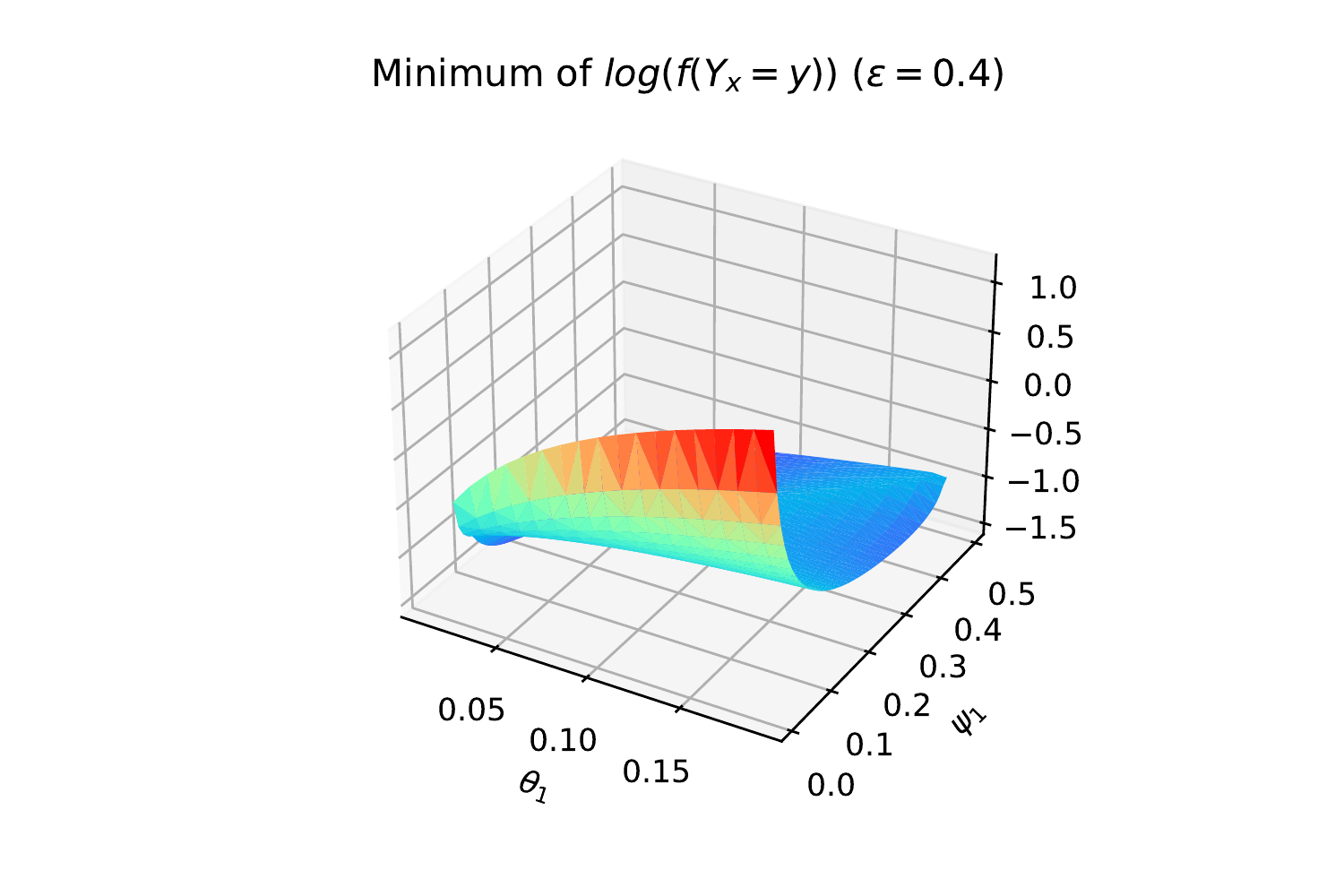}
    \caption{We search the minimum of $f(Y_x = y)$ in the binary case, by fixing each possible $\theta_1, \theta_2$ and $\psi_1, \psi_2$ in \eqref{re-formulation}. In this way \eqref{re-formulation} degenerates to be a set of linear programming problems. We can detect that the number of optimal solutions are finite. Moreover, we will show that PI-SFP converges to these optimal solutions as shown in tab.~\ref{simulation_result}.}
    \label{minimum_graph}
\end{figure}

The simulation result is in Tab.~\ref{simulation_result} and Fig.~\ref{PI-SFP_0.5}. We can find that PI-SFP successfully find the optimal solutions and optimal values in a fast convergence rate. Although the initial error increases as $\epsilon$ increases, they are always kept under control by the theoretical error, which is guaranteed by Theorem.~\ref{convergence_theorem}. Notice that the converging error performs an fast decrease in practice..

\begin{table}[h]
	\caption{Simulation results. The middle column denotes the optimal solution with iteration $1000$. The approximation of $\underline{f(Y_x = y)}$ decreases monotonically with the increasing $\epsilon$, since the feasible region of latent variables $\Phi$ is gradually enlarged by \eqref{initial_observe}.}
	\begin{tabular}{c| ccc ccc  | c}
		\hline
		\multirow{2}{*}{$\epsilon$}    & \multicolumn{6}{c|}{$\Phi$} & \multirow{2}{*}{$\underline {f(Y_x = y)} $ }      \\
		\cline{2-7} 
		& \makecell[c]{$\theta_1$ }        & \makecell[c]{$\theta_2$ }   &\makecell[c]{$\psi_1$ }  & \makecell[c]{$\psi_2$ }        & \makecell[c]{$\omega_1$ }   &\makecell[c]{$\omega_2$ }\\
		\hline
		0.1    & 0.067 &0.133 & 0.261 & 0.239 &0.333 & 0.167 & 0.370\\
		0.2    & 0.050 &0.150 & 0.262 & 0.238 &0.375 & 0.125 & 0.350 \\
		0.3    & 0.029 &0.171 & 0.264 & 0.236 &0.429 & 0.072 & 0.298 \\
		{0.4}    & 0.001 & 0.199 & 0.310 & 0.190 &0.500 & 0.000& 0.200 \\
	\end{tabular}
	\label{simulation_result}
\end{table}

\begin{figure}[h]
    \centering
    \includegraphics[width = 6cm, height = 4cm]{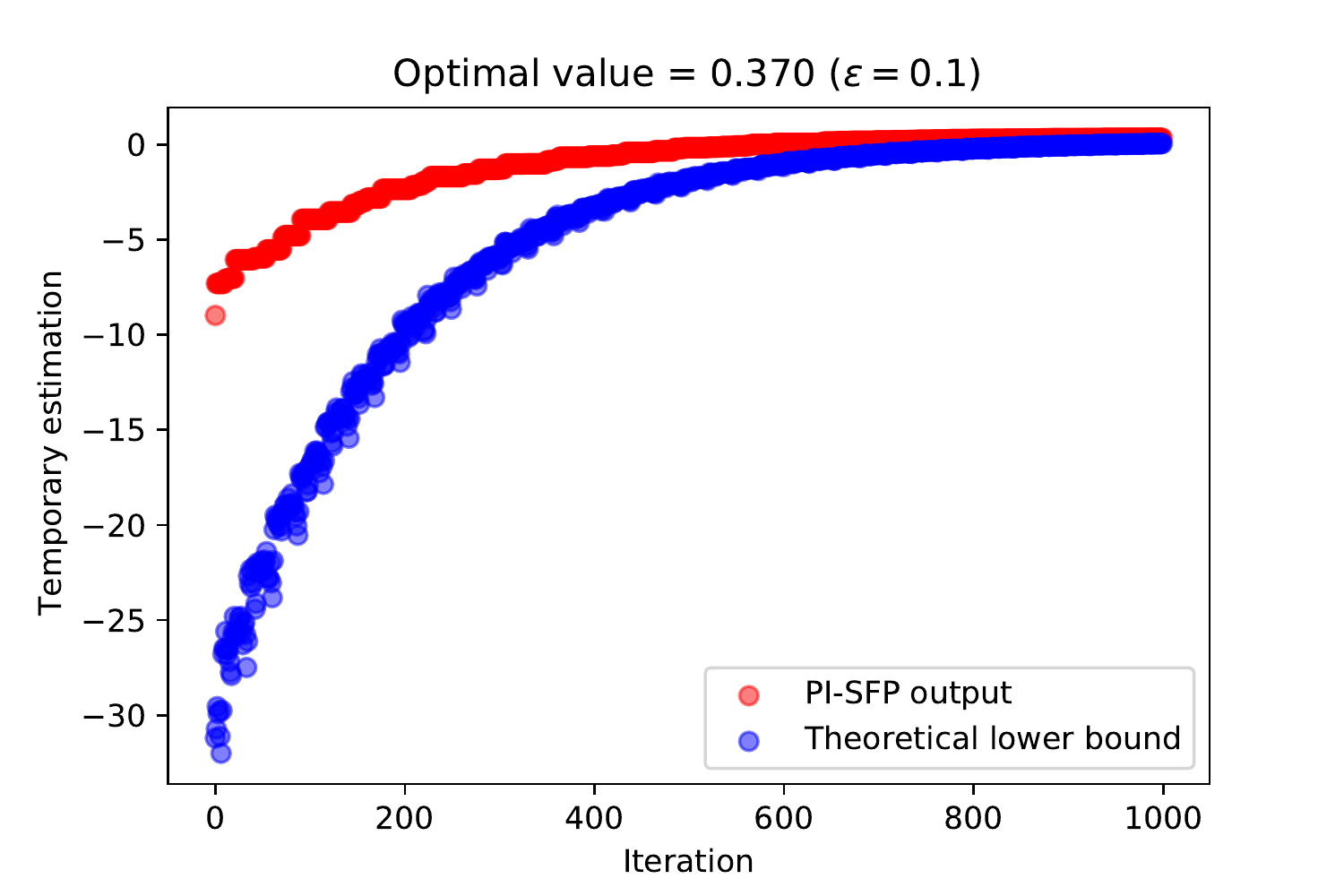}
    \includegraphics[width = 6cm, height = 4cm]{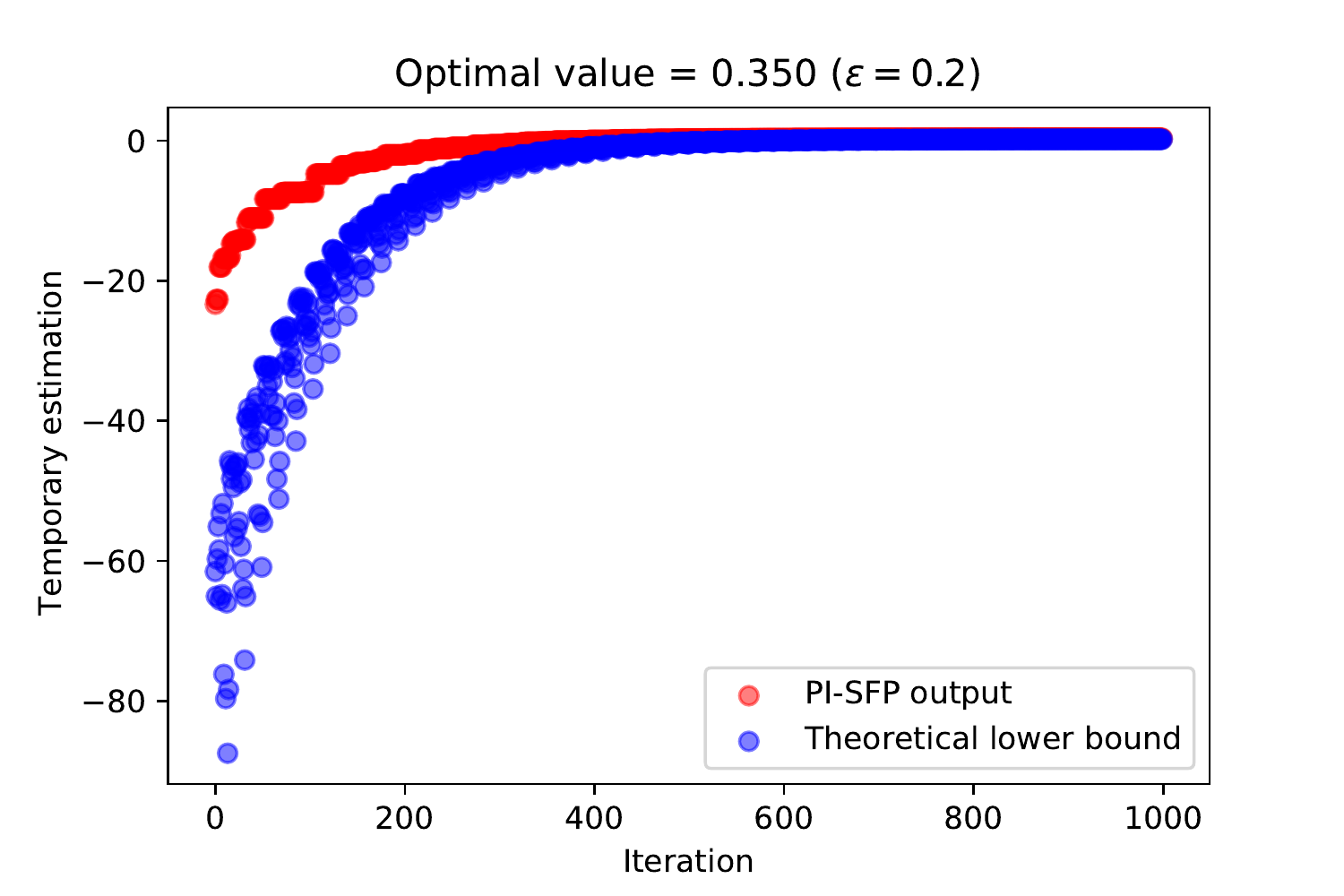}
    \includegraphics[width = 6cm, height = 4cm]{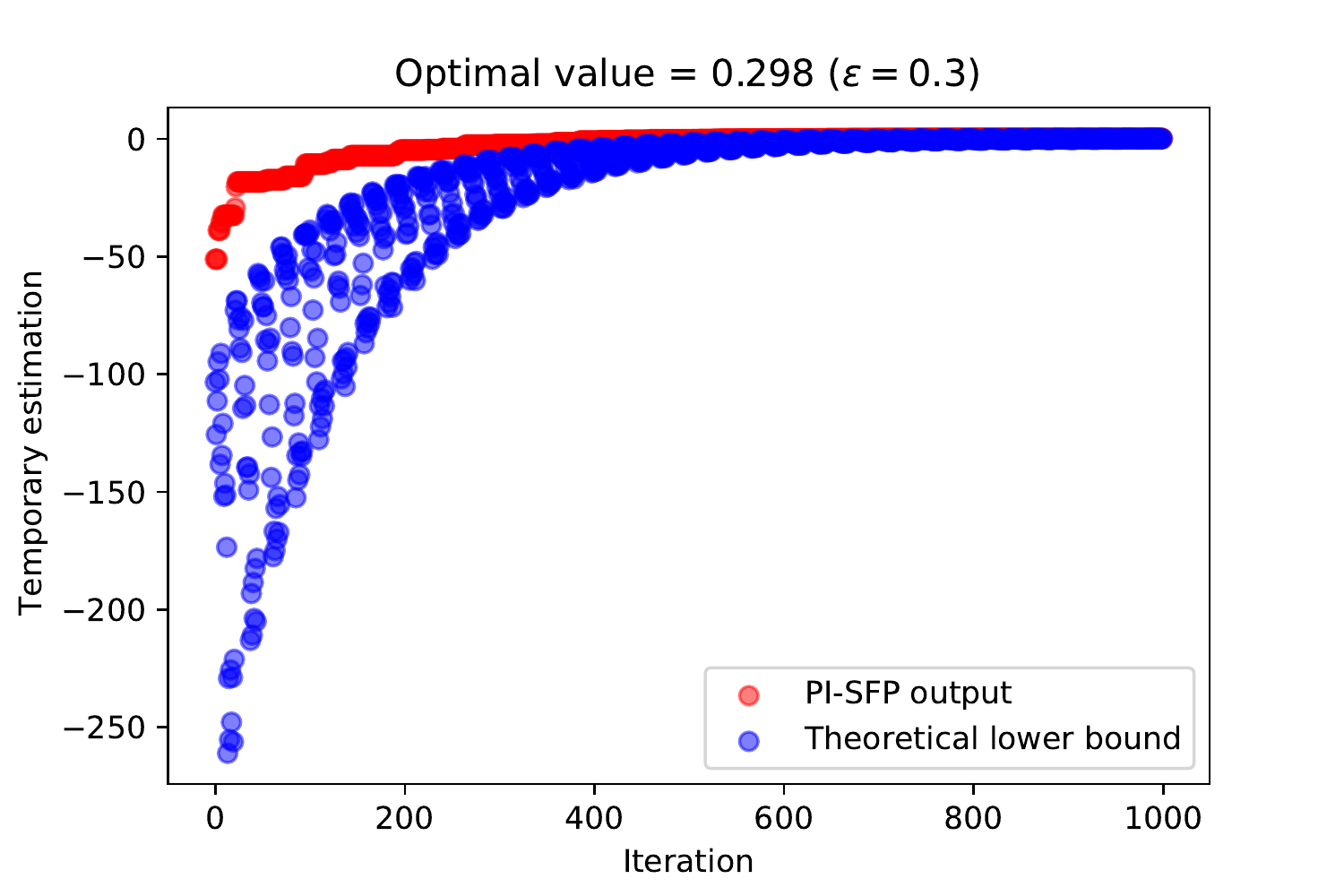}
    \includegraphics[width = 6cm, height = 4cm]{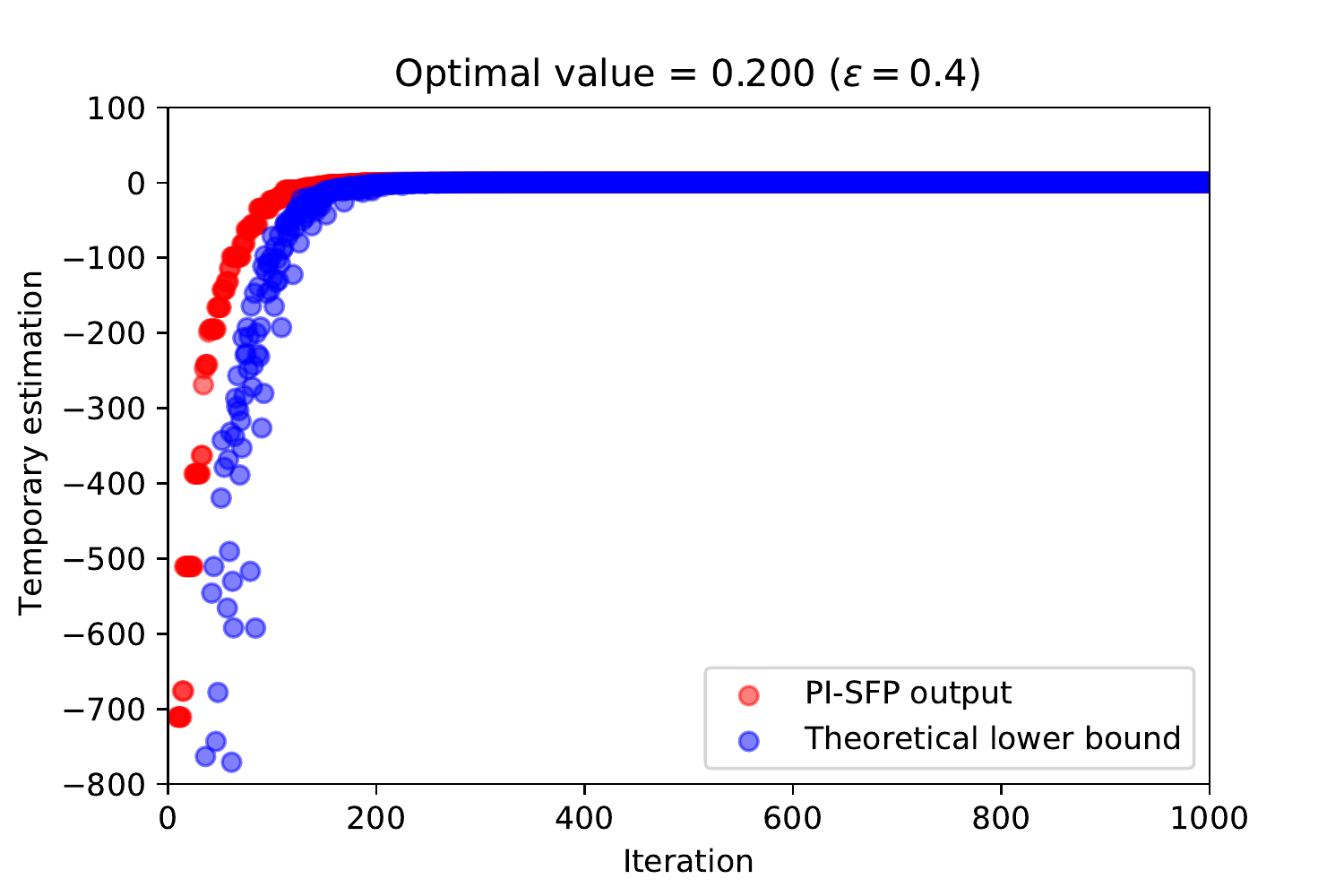}
    \caption{Results of PI-SFP. PI-SFP (red) converges to the optimal value of $\underline{f(Y_x = y)}$ when $\epsilon$ changes from $0.1$ to $0.4$ with a theoretical convergence rate guarantee (blue).}
    \label{PI-SFP_0.5}
\end{figure}

\section{Further discussions and extensions}\label{section_dis_ext}
In this section, we provide further discussions and extensions on assumptions, algorithm, graphical structure, and moreover, continuous confoundings.

\subsection{Discussions on assumptions}\label{discuss_ass}
We focus on the core partial observability assumption (Ass.~\ref{ass_partial_bounded}), and analyze its necessity, generalisability, and verifiability respectively. First, we show that the partial observability assumption is more helpful to achieve a better bound of $f(Y_x = y)$, instead of reversibility, although the latter one serves as an important hypothesis for calculation in previous papers. Second, we show that our Ass.~\ref{ass_partial_bounded} is weaker and more general, addressing kinds of cases that previous assumptions fail to work. At last, we illustrate that Ass.~\ref{ass_partial_bounded} is verifiable.

\noindent \textbf{Necessity} In this part, we investigate the necessity of the partial observability assumption. We consider the case when there is no knowledge on $P(\bm{W} \mid \bm{U})$, i.e., without Ass.~(\ref{ass_partial_bounded}). In fact, even in this case, it is still hard to achieve the tight lower bound of $f(Y_x = y)$ as we illustrated in the introduction. Hence, analogously to Section 3.2, we also resort to a relatively broader feasible region of $f(y,\bm{W},\bm{U},\bm{X})$ than $\mathcal{F}$, within which we show the tight lower bound will degenerate to be trivial ($f(y, X=x)$) without Ass.~(\ref{ass_partial_bounded}). 

Notice that a precise portrayal of the dynamic relationship between partial observability and tight bound is not realistic currently. However, the following lemma can at least serve as an initial exploration in single proxy control, to illustrate the role of Ass.~(\ref{ass_partial_bounded}).

\begin{lemma}
Assume that $[\underline{P(\bm{W} \mid \bm{U})}, \overline{P(\bm{W} \mid \bm{U})}] = [\bm{0}_{\dim(\bm{W}) * d}, \bm{1}_{\dim(\bm{W}) * d} ]$, and $f(\bm{U},X=x) > \bm{0}$. We consider the whole set of $f(y,\bm{W},\bm{U},\bm{X})$ which is within $\mathcal{\widetilde{F}}$ and is additionally compatible with two observed distributions $f(\bm{W}, X\neq x)> \bm{0}_{dim(\bm{W}) * 1}$, $f(y, \bm{W}, X= x)>\bm{0}_{dim(\bm{W}) * 1}$ by an unknown $P(\bm{W}\mid \bm{U})$. Then
\begin{itemize}
\item The tight lower bound of ${f(Y_x = y)}$ is $f(y, X=x)$.
\item If $P(\bm{W} \mid \bm{U})$ is restricted to be left-reversible and $ f(\bm{W} \mid X\neq x) \neq  f(\bm{W} \mid X=x,y)$, then the tight lower bound of ${f(Y_x = y)}$ is still $f(y, X=x)$.
\item If $P(\bm{W} \mid \bm{U})$ is restricted to be left-reversible and $ f(\bm{W} \mid X\neq x) =  f(\bm{W} \mid X=x,y)$, then ${f(Y_x = y)}$ is lower bounded by $f(y\mid X=x)$.
\end{itemize}
\label{lemma_no_assumption}
\end{lemma}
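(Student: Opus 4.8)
The plan is to build everything on the identity $f(Y_x=y)=f(y,X=x)+\sum_{i=1}^{d}\theta_i\omega_i/\psi_i$ coming from~\eqref{formulation}, where $\theta_i=f(y,u_i,X=x)$, $\psi_i=f(u_i,X=x)$, $\omega_i=f(u_i,X\neq x)$. Since every summand is non-negative, $f(Y_x=y)\ge f(y,X=x)$ holds at every feasible point, so for the first two bullets it suffices to make the fractional sum vanish, while the third shows that reversibility forbids this collapse. For bullet one I would exhibit a feasible distribution in which $\bm\theta$ and $\bm\omega$ are supported on disjoint confounder states: place all mass of $f(y,\bm U,X=x)$ on $u_1$ (so $\theta_1=f(y,X=x)$, $\theta_i=0$ for $i\ge 2$) and all mass of $f(\bm U,X\neq x)$ on $u_2$ (so $\omega_2=f(X\neq x)$, $\omega_i=0$ otherwise), whence $\theta_i\omega_i=0$ for every $i$ and the sum is exactly $0$. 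Feasibility is certified by an explicit witness $P(\bm W\mid\bm U)$ whose first two columns are $f(\bm W\mid y,X=x)$ and $f(\bm W\mid X\neq x)$ (genuine probability vectors by positivity of the observations), which reproduce $P\bm\theta=f(y,\bm W,X=x)$ and $P\bm\omega=f(\bm W,X\neq x)$; membership in $\widetilde{\mathcal F}$ is automatic because $[\underline P,\overline P]=[\bm 0,\bm 1]$ renders the box constraints~\eqref{partial_observe} vacuous. The remaining columns of $P$ and the slack coordinates of $\bm\psi$ are free subject only to $0\le\theta_i\le\psi_i$, $\psi_i>0$, and $\sum_i\psi_i=f(X=x)$ (with a limiting argument in the boundary case $f(y,X=x)=f(X=x)$). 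Hence the infimum is $f(y,X=x)$.

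The second bullet reuses the identical construction and only has to verify that $P$ may additionally be taken left-invertible, i.e. of full column rank $d$. The hypothesis $f(\bm W\mid X\neq x)\neq f(\bm W\mid X=x,y)$ is exactly what is required: two distinct probability vectors are automatically linearly independent, since $p=\lambda q$ with $\bm 1^\top p=\bm 1^\top q=1$ forces $\lambda=1$. Thus columns one and two are independent, and the residual columns $3,\dots,d$ can be completed to a full-rank stochastic matrix whenever $\dim(\bm W)\ge d$ (which left-reversibility presupposes). Because these completed columns multiply the zero coordinates $\theta_i=\omega_i=0$ for $i\ge 3$, they leave the two compatibility equations undisturbed, the sum stays $0$, and the bound $f(y,X=x)$ remains tight.

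The third bullet is where reversibility bites. Writing $\bm q:=f(\bm W\mid X\neq x)=f(\bm W\mid X=x,y)$, the compatibility equations become $P\bm\theta=f(y,X=x)\,\bm q$ and $P\bm\omega=f(X\neq x)\,\bm q$, so $P\bigl(\bm\theta/f(y,X=x)\bigr)=P\bigl(\bm\omega/f(X\neq x)\bigr)$; left-invertibility (injectivity of $P$) then forces the proportionality $\bm\omega=\tfrac{f(X\neq x)}{f(y,X=x)}\,\bm\theta$. Substituting gives $\sum_i\theta_i\omega_i/\psi_i=\tfrac{f(X\neq x)}{f(y,X=x)}\sum_i\theta_i^2/\psi_i$, and the Cauchy--Schwarz (Engel/Titu) inequality $\sum_i\theta_i^2/\psi_i\ge(\sum_i\theta_i)^2/\sum_i\psi_i=f(y,X=x)^2/f(X=x)$ yields $f(Y_x=y)\ge f(y,X=x)\bigl(1+f(X\neq x)/f(X=x)\bigr)=f(y,X=x)/f(X=x)=f(y\mid X=x)$, using $f(X=x)+f(X\neq x)=1$.

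The main obstacle I anticipate is the third bullet: one must justify that left-reversibility legitimately transfers to the relaxed feasible set and genuinely \emph{collapses} the degrees of freedom, so that the equality regime $\bm q:=f(\bm W\mid X\neq x)=f(\bm W\mid X=x,y)$ pins down $\bm\omega\propto\bm\theta$ rather than merely constraining it; the Cauchy--Schwarz step is then routine. The construction-based bullets are comparatively mechanical once the witnessing $P$ is written down, the only care being to keep the disjoint-support assignment consistent with $0\le\theta_i\le\psi_i$ and the marginal sums, and to ensure $d\ge 2$ so that two distinct confounder states are available.
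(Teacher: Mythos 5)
Your proposal is correct and follows the same overall strategy as the paper's proof: exhibit a feasible point where $\bm{\theta}$ and $\bm{\omega}$ have disjoint supports so that the fractional sum vanishes (bullets one and two), and for the third bullet use injectivity of $P(\bm{W}\mid\bm{U})$ to force $\bm{\omega}\propto\bm{\theta}$ and then apply Cauchy--Schwarz, landing on exactly the same chain $f(Y_x=y)\ge f(y,X=x)\bigl(1+f(X\neq x)/f(X=x)\bigr)=f(y\mid X=x)$. Where you genuinely diverge is in the execution of the constructive bullets. The paper spreads $f(y,\bm{U},X=x)$ over the first $m$ confounder states and $f(\bm{U},X\neq x)$ over the remaining $d-m$, building a block-structured witness $P(\bm{W}\mid\bm{U})$ and treating the cases $\dim(\bm{W})\ge d$ and $\dim(\bm{W})<d$ separately; you concentrate each on a single state, which is the cleanest instance of the same idea and suffices because only the two compatibility equations $P\bm{\theta}=f(y,\bm{W},X=x)$ and $P\bm{\omega}=f(\bm{W},X\neq x)$ must be met. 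For left-reversibility in bullet two, the paper fixes $m=d-1$ and verifies full column rank via a Schur-complement computation $\bm{P'_{22}}-\bm{P_{21}}\bm{P_{11}^{-1}}\bm{P'_{12}}\neq\bm{0}$ together with Sylvester's rank inequality, whereas you observe that two distinct probability vectors are automatically linearly independent and extend to $d$ independent stochastic columns; your argument is more elementary and avoids the rank bookkeeping, at the cost of being slightly less explicit. The caveats you flag yourself (needing $d\ge 2$, the boundary case $f(y,X=x)=f(X=x)$, and $\dim(\bm{W})\ge d$ for left-invertibility) are the right ones and are implicitly present in the paper's construction as well; none of them constitutes a gap.
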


The proof is in Appendix.~\ref{pro_lemma_no_assumption}. This lemma extends the well-known inequality $f(Y_x = y) \geq f(y,X=x)$ \cite{reason:Pearl09a} to single proxy control. $f(y, \bm{W}, X= x)$, $f(\bm{W}, X\neq x)$ are to control $f(y, \bm{U}, X= x)$, $f(\bm{U}, X\neq x)$ respectively, where we aim to construct $f(y, \bm{U}, X= x)\circ f(\bm{U}, X\neq x) = \bm{0}_{d*1}$\footnote{$\circ$ denotes the Hadamard product.}. 

According to the first two cases in lemma.~\ref{lemma_no_assumption}, we can claim the reversibility can not directly help produce a non-trivial tight lower bound in all cases. In fact, reversibility is only for ease of matrix calculation. Moreover, we could ignore the third scenario in most cases, since this equality is fairly rare in practice and without theoretical guarantee. In conclusion, in order to enhance the tight bound, 1) the knowledge on $P(\bm{W}\mid \bm{U})$ is necessary, and 2) reversibility assumption on $P(\bm{W} \mid \bm{U})$ may not be necessary. Removing or weakening the reversibility is rational and worthy of being explored.

\noindent \textbf{Generalisability} Our assumption relaxes the assumptions of the matrix adjustment method~\cite{rothman2008modern,kuroki2014measurement} (seen as tab.~\ref{table_literature}), where authors assumed that the transition operator $P(\bm{W}\mid \bm{U})$ is totally explicit and reversible. Due to this relaxation, our Ass.~\ref{ass_partial_bounded} covers a few new problems in practice. That is, if the reversibility and total observability do not both exist (see our simulation part), then the recent literature on single-proxy control fail to work, just except for our PI-SFP. On the other hand, we also do not need an auxiliary $\bm{Z}$ to adopt double negative control such as \cite{miao2018identifying, cui2020semiparametric, tchetgen2020introduction, deaner2018proxy,shi2020multiply,singh2020kernel, nagasawa2018identification,kallus2021causal}. This helps us to get rid of a large number of assumptions such as completeness, and bridge function in the previous literature.

\noindent \textbf{Verifiability} The previous work has suggested the feasibility of Ass.~\ref{ass_partial_bounded}. \cite{kuroki2014measurement} claimed that if we want to find the bounds $\underline{P(\bm{W}\mid \bm{U})}$ and $\overline{P(\bm{W}\mid \bm{U})}$ a priori, the Bayesian strategy \cite{greenland2005multiple} and some re-calibration methods~\cite{rothman2008modern, selen1986adjusting} can be resorted. To show this, they provide their estimation of $P(\bm{W}\mid \bm{U})$ in detail in their ``Head Start Program''. 

\subsection{Discussion on algorithms}

In this section, two more optimization methods are discussed to illustrate the diversity of solving \eqref{re-formulation}, and then we state that PI-SFP performs better than them. In addition, we propose a prunning strategy for acceleration supported by a local optimization method.

\noindent\textbf{Algorithm comparison} In \cite{shen2017solving}, the author derived an $\epsilon-$ approximation method, which can be adopted and its result falls in $[\underline{ f(Y_x = y)}, (1+\epsilon)\underline{f(Y_x = y)}]$. However, this algorithm maintains an exponential time complexity for the dimension $dim(\bm{U})$, making it impossible to operate properly in high-dimensional confoundings. Moreover, \cite{le2014dc} designed an iterative algorithm to search the karush-kuhn-tucker~(KKT) point of difference-in-convex (DC) problem, which can be applied to~\eqref{re-formulation}. However, KKT theory can not guarantee the global optimality, compared with our PI-SFP.

\noindent\textbf{Algorithm acceleration} In order to accelerate PI-SFP, we aim to set a sufficient criteria to evaluate whether the current partition contains the optimal solution. If not, we can delete the branch online and narrow our search. By this motivation, we propose an auxiliary algorithm to search the local minimum of $f(Y_x = y)$, which serves as an upper-bound of $\underline{f(Y_x = y)}$. Specifically, in sub simplex $S$, if the optimal value $\underline{\underline{f_S(Y_x = y)}}$ is even larger than the local minimum, then it will be larger than $\underline{f(Y_x = y)}$. Hence we claim this partition must not include the optimal solutions, and this partition can be deleted forever. This auxiliary algorithm is by local optimization, whose details are shown in Appendix.~\ref{pre_train}.

\subsection{Discussion on graphical structure}

\noindent \textbf{Fig.~\ref{Fig.sub.1}} Our algorithm PI-SFP mainly focuses on Fig.~\ref{Fig.sub.1}. Moreover, if $\bm{W} \rightarrow \bm{Y}$ is added, the optimization problem will be transferred as follows under Ass.~\ref{ass_partial_bounded}.
 \begin{equation}
    \begin{aligned}
    &  \min f(y \mid \bm{U},\bm{X} = x)f(\bm{U}),\\ &\text{~subject to~}
     f(y \mid \bm{U},\bm{X} = x)f(\bm{U}\mid \bm{X} = x) = f(y \mid x),~\text{where~}f(\bm{U} \mid \bm{X}=x) \text{~satisfies}\\
    & \left[\begin{matrix}
    \overline{f(\bm{W} \mid \bm{U})}f(\bm{U} \mid \bm{X}=x) - f(\bm{W} \mid \bm{X} = x)\\ f(\bm{W} \mid \bm{X} = x) - \underline{f(\bm{W} \mid \bm{U})}f(\bm{U} \mid \bm{X}=x)
    \end{matrix} \right] \geq 0, 
    \left[\begin{matrix}
    \overline{f(\bm{W} \mid \bm{U})}f(\bm{U}) - f(\bm{W}) \\ f(\bm{W}) - \underline{f(\bm{W} \mid \bm{U})}f(\bm{U})
    \end{matrix}\right] \geq 0.
    \end{aligned}  \label{wy_formulation}
\end{equation}
Notice that the feasible region of $f(y \mid \bm{U},\bm{X} = x)$ and $f(\bm{U})$ is even more irregular than in \eqref{re-formulation}. Nevertheless, we can still adopt a similar strategy to PI-SFP to approximate its optimal value. Analogously, we construct a simplex to enclose the original feasible region. Then do bisetion to generate sub simplices, and reduce our optimization problem to linear programming in the set of sub space. We will discuss in detail in the future work.

\noindent \textbf{Fig.~\ref{Fig.sub.2} and \ref{Fig.sub.3}} We extend PI-SFP on Fig.~\ref{Fig.sub.2} and \ref{Fig.sub.3}. We illustrate that the negative exposure control $\bm{Z}$ can enhance our estimation. Due to the fact $f(y\mid u,X=x) = f(y \mid u,X=x, Z)$, our original model~(\ref{eqn_basic_bound}) can be transformed as:

\begin{equation}
    \begin{aligned}
     \underline{f(Y_x = y)} := f(y,X=x)  + \max_{\mathcal{Z}\subseteq Z}\min_{f(y,\bm{W},\bm{U}, \bm{X}, \mathcal{Z}) \in \mathcal{\widetilde{F} }_{Z}}  \sum_{u=1}^{d} \frac{f(y,u_i,X=x, z \in \mathcal{Z}) f(u_i, X\neq x)}{f(u_i,X=x,z \in \mathcal{Z})}.
    \end{aligned}
\end{equation}
where $\mathcal{\widetilde{F} }_{Z}$ is the feasible region of $f(y,\bm{W},\bm{U}, \bm{X}, \mathcal{Z})$. It is constructed by an analogous way to that of constructing $\mathcal{\widetilde{F} }$ in \eqref{construction_F}.

That is to say, for each $\mathcal{Z} \subseteq Z$, we can adopt PI-SFP, and choose the maximum of which as the best lower bound $\underline{f(Y_x = y)}$.

\subsection{Discussions on extensions to the continuous confoundings}
In this section, we further consider the continuous case of $U$. We only have a priori upper/lower envelop on $\{P(\bm{W}\mid u \in [u_{i-1}, u_{i}]), i=1,2,...d \}$ as in Ass.~\ref{ass_partial_bounded}, and the optimal value is still denoted as $\underline{f(Y_x = y)}$. We re-use the branch-and-bound strategy in our main text based on discretization. The approximation error decreases with the sampling length.

\begin{assumption}{(Lipschitz condition)} $\forall y \in \bm{Y}, \forall \{u^{'},u^{''}\} \in \bm{U} \backslash \{u_0,u_1,...u_d\}$, 
\begin{equation}
    \begin{aligned}
      \left| \frac{f(y, u^{'}, X=x) - f(y, u^{''}, X=x)}{f(u^{'}, X=x) - f(u^{''}, X=x)} \right| \leq C_1, \left| \frac{f(u^{'}, X=x) - f(u^{''}, X=x)}{u^{'} - u^{''}} \right| \leq C_2,
    \end{aligned}
\end{equation}\label{ass_lipschitz}
where $ C_1, C_2$ are positive constants.
\end{assumption}

Then we have the following theorem. 

\begin{corollary}
Suppose that Ass.~\ref{ass_partial_bounded}-\ref{ass_lipschitz} holds. When $U$ is continuous, and $\max\limits_{i \in \{1,2,...d\}} |u_i - u_{i-1}| < \eta \delta$. Then
 \begin{equation}
        \begin{aligned}
          \underline{f(Y_x = y)} \leq \lim \limits_{n\rightarrow +\infty}\underline{\underline{f_{opt}^n (Y_x = y)}} &\leq \frac{1}{1-\frac{1}{2}C_2 \eta}\underline{f(Y_x = y)} + \frac{C_1f(X\neq x) -  f(y, X=x)}{2 - C_2 \eta} C_2 \eta.
         \end{aligned}
    \end{equation}
   \label{theorem_continuous}
\end{corollary}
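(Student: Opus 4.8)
The plan is to first reduce the statement to a purely analytic comparison between the \emph{continuous} objective and its \emph{discretized} surrogate, and then to control the discretization error bin-by-bin using the Lipschitz conditions (Assumption~\ref{ass_lipschitz}) together with the lower bound $f(u,X=x)\ge\delta$ furnished by the boundedness assumption. Concretely, for a partition $\{u_0,\dots,u_d\}$ with each bin of width $L_i:=u_i-u_{i-1}$ satisfying $\max_i L_i<\eta\delta$, the algorithm is run on the discrete surrogate obtained by replacing each continuous quantity by its bin integral $A_i=\int_{u_{i-1}}^{u_i}f(y,u,X=x)\,du$, $B_i=\int_{u_{i-1}}^{u_i}f(u,X\neq x)\,du$, $C_i=\int_{u_{i-1}}^{u_i}f(u,X=x)\,du$. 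By Theorem~\ref{convergence_theorem} applied to this discrete program, $\lim_{n\to\infty}\underline{\underline{f^n_{opt}(Y_x=y)}}$ equals its optimal value, so it suffices to sandwich that value between $\underline{f(Y_x=y)}$ and the stated upper expression.

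For the left inequality I would exhibit, for the discrete minimizer, a continuous feasible candidate whose continuous objective does not exceed the discrete optimum: spreading each bin mass as a piecewise-constant density $a\equiv A_i/L_i$, $b\equiv B_i/L_i$, $c\equiv C_i/L_i$ on $[u_{i-1},u_i]$ gives $\int\frac{ab}{c}\,du=\sum_i\frac{A_iB_i}{C_i}$ exactly, and this candidate remains compatible with Assumption~\ref{ass_partial_bounded} and the observed marginals in aggregate. Minimality of $\underline{f(Y_x=y)}$ over the continuous feasible set then yields $\underline{f(Y_x=y)}\le\lim_n\underline{\underline{f^n_{opt}(Y_x=y)}}$.

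The substantive work is the right inequality, which I would derive from a per-bin estimate of the form $\frac{A_iB_i}{C_i}\le\frac{1}{1-\frac12 C_2\eta}\int_{u_{i-1}}^{u_i}\frac{ab}{c}\,du+\frac{C_2\eta}{2-C_2\eta}\,C_1 B_i$. The multiplicative factor comes from the denominator: since $c$ is $C_2$-Lipschitz in $u$ (second part of Assumption~\ref{ass_lipschitz}) with $c\ge\delta$ and bin width below $\eta\delta$, the within-bin deviation of $c$ from its mean $\bar c=C_i/L_i$ is at most $\tfrac12 C_2\eta\,\bar c$, so $c(u)\ge\bar c\,(1-\tfrac12 C_2\eta)$ and hence $1/c(u)\le\bar c^{-1}(1-\tfrac12 C_2\eta)^{-1}$. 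The additive piece comes from the numerator: the first part of Assumption~\ref{ass_lipschitz} bounds the oscillation of $a=f(y,u,X=x)$ by $C_1$ times that of $c$, so replacing $a$ by its bin average costs at most $\tfrac12 C_1 C_2\eta\,\bar c$ per unit length, which after integrating against the $b$-mass yields the $C_1 B_i$ contribution. Summing over $i$, using $\sum_i B_i=f(X\neq x)$ and the fact that the $\int\frac{ab}{c}\,du$ sum to $\underline{f(Y_x=y)}-f(y,X=x)$, and recombining the leftover $f(y,X=x)$ terms under the same factor, reproduces exactly $\frac{1}{1-\frac12 C_2\eta}\underline{f(Y_x=y)}+\frac{C_2\eta}{2-C_2\eta}\bigl(C_1 f(X\neq x)-f(y,X=x)\bigr)$; letting $n\to\infty$ closes the argument.

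I expect the main obstacle to be the per-bin comparison between the \emph{ratio of bin integrals} $A_iB_i/C_i$ and the \emph{integral of the pointwise ratio} $\int ab/c$, since this is not governed by a single elementary inequality and demands simultaneously tracking the jointly varying oscillations of $a$, $b$ and $c$ inside a bin; obtaining the sharp constant $\tfrac12$ (rather than a crude $C_2\eta$) is precisely what forces a mean-value/midpoint refinement in estimating $\bar c$ and $\bar a$. A secondary subtlety is reconciling the left-inequality construction with Assumption~\ref{ass_lipschitz}: if the Lipschitz bounds are imposed on the feasible densities rather than only on the data law, the piecewise-constant candidate must be replaced by a Lipschitz mollification of it whose objective is perturbed by at most $O(C_2\eta)$ and can be absorbed into the same error budget, so that $\lim_n\underline{\underline{f^n_{opt}(Y_x=y)}}\ge\underline{f(Y_x=y)}$ still holds.
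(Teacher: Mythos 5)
Your proposal follows essentially the same route as the paper's proof: the paper likewise proves a per-bin lemma bounding the ratio of bin integrals against the pointwise ratio with the multiplicative factor $\frac{1}{1\mp\frac{1}{2}C_2\eta}$ and additive term $\frac{\frac{1}{2}C_1C_2\eta}{1\mp\frac{1}{2}C_2\eta}$, applies it to the continuous optimizer and uses feasibility of its bin integrals in the discrete program for the right inequality, and uses the piecewise-constant spreading of the discrete optimizer for the left inequality. The only real difference is presentational (the paper bounds $A_i/C_i$ pointwise first and then integrates against the $b$-mass, and it does not address your mollification concern), so the argument and constants match.
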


The proof is in Appendix.~\ref{proof_continuous}.

\section{Conclusions}
In this paper, we first list the traditional settings of assumptions, such as total observability, reversibility, completeness, bridge function in the negative control problem and analyze their limitations, then we propose a fairly broad 'partial boundedness' assumption. On this basis, we develop a branch-and-bound global optimization method called PI-SFP to achieve the valid bound of ACE. In the future, we will further extend the PI-SFP approach to a wider range of graph structures, as well as to the case of continuous confoundings.

\section{Acknowledgement}
I sincerely thank Professor Yuhao Wang for his suggestions for the first four parts.

\normalem
\bibliographystyle{unsrt}  
\bibliography{references}

\newpage 
\appendices


\setcounter{equation}{0}
\setcounter{section}{0}
\setcounter{subsection}{0}
\renewcommand{\theequation}{A.\arabic{equation}}
\renewcommand{\thesubsection}{A.\arabic{subsection}}

In appendices, we provide the supplementary material and proofs for our main text. 

Appendix.~\ref{proof_basic_IR}-\ref{equal_reform_algorithm1} are for propositions. In Appendix.~\ref{proof_basic_IR}, we prove that $\mathcal{F} \subseteq \mathcal{\widetilde{F}}$. In Appendix.~\ref{equal_reform_algorithm1}, we demonstrate the bound will be tight under certain cases. 

Appendix.~\ref{discussion_ass} is for the assumption. We discuss the case when Ass.~\ref{positive definite assumption} does not hold.

Appendix.~\ref{app_main_result_1} is for the main results. First, we show that our original simplex $S_{0}$ encloses our identification region. Second, we prove that the original optimization can be transformed to the set of sub-problems in the reduced space. Third, we show our construction to transfer the original nonlinear optimization problem to the weaker linear case. Finally, we demonstrate our algorithm converges to the global optimal solution at an exponential rate.

Appendix.~\ref{app_corollary} is for the corollary, in which we extend our result from $f(Y_x=y)$ to the more general ACE.

Appendix.~\ref{app_discussion} is for extensions. We additionally discuss 1) the previous assumptions in the original literature, 2) auxiliary optimization algorithm, 3) acceleration strategy, and 4) extension to the continuous confoundings.

\subsection{The proof of proposition.~\ref{basic_IR}}\label{proof_basic_IR}
According to Ass.~\ref{ass_partial_bounded}, by integration, we can also directly claim that if $f(y,\bm{U}, \bm{W}, \bm{X}) \in \mathcal{F}$, then 
\begin{equation}
    \begin{aligned}
     &\underline{P^{}(\bm{W} \mid \bm{U})} \bm{\theta} \leq P(y,\bm{W},X=x) \leq  \overline{P^{}(\bm{W} \mid \bm{U})} \bm{\theta}, \forall x \in X.\\
     &\underline{P^{}(\bm{W} \mid \bm{U})} \bm{\psi} \leq P(\bm{W},X=x) \leq  \overline{P^{}(\bm{W} \mid \bm{U})} \bm{\psi}, \forall x \in X.\\
     &\underline{P^{}(\bm{W} \mid \bm{U})} \bm{\omega} \leq f(\bm{W},X\neq x) \leq  \overline{P^{}(\bm{W} \mid \bm{U})}\bm{\omega}, \forall x \in X.
    \end{aligned}
\end{equation}
Thus
\begin{equation}
    \begin{aligned}
        \left[\begin{matrix}
   -\bm{I_{d*d}} \\ \bm{I_{d*d}}
   \end{matrix}\right] \left[\begin{matrix}
   &f(y,\bm{W},X=x)^{T} \\ &f(\bm{W},X=x)^{T} \\ &f(\bm{W},X\neq x)^{T}
   \end{matrix} \right]^{T}  -  \left[ \begin{matrix}
   &-\overline{P(\bm{W}\mid \bm{U})} \\  &\underline{P(\bm{W} \mid \bm{U})}
   \end{matrix} \right] \bm{\phi} \geq \bm{0}.
    \end{aligned}
\end{equation}
Combined with the natural that $\theta_i \in [0, P(y,X=x)]$, $\psi_i \in [0,P(X=x)], \omega_i \in [0, P(X\neq x)], i=1,2,...d$, we have $f(y,\bm{U}, \bm{W}, \bm{X}) \in \mathcal{\widetilde{F}}$. In conclusion, we claim $\mathcal{F} \subseteq \mathcal{\widetilde{F}}$.

\subsection{The proof of proposition.~\ref{proposition_tight}} \label{equal_reform_algorithm1}

\begin{proof}
As the optimal solution $\bm{\phi_{opt}}$ satisfies the constraint \eqref{constraint_prove_tight} in Proposition.~\ref{proposition_tight}, we can equivalently claim that $\bm{\phi_{opt}}$ is compatible with some $f(y,\bm{W}, \bm{U}, \bm{X})$ which satisfies $f(y,\bm{W}, \bm{U}, \bm{X}) \in \mathcal{F}$. On this basis, Formulation.~\ref{formulation} can be transformed with stricter constraints but equal minimum optimal value, namely that from

\begin{equation}
    \begin{aligned}
    &  \text{min~} f(y,X=x) +  \sum_{i=1}^{d}\frac{1}{\psi_{i}}\theta_{i}\omega_{i}\\
    &\text{subject to:~} f(y,\bm{U},\bm{W}, \bm{X}) \in \mathcal{\widetilde{F}}
    \end{aligned} 
\end{equation}

to 

\begin{equation}
    \begin{aligned}
    &  \text{min~} f(y,X=x) +  \sum_{i=1}^{d}\frac{1}{\psi_{i}}\theta_{i}\omega_{i}\\
    &\text{subject to:~} f(y,\bm{U},\bm{W}, \bm{X}) \in \mathcal{\widetilde{F}} \cap \mathcal{{F}} = \mathcal{{F}}.
    \end{aligned} 
\end{equation}

This is equal to the original \eqref{eqn_basic_bound}. Hence $f(Y_x = y)$ is the tight lower bound of $f(Y_x = y)$ under constraint.~\ref{constraint_prove_tight}. 

By contrast, if the constraint \eqref{constraint_prove_tight} does not hold, then any $f(y,\bm{U},\bm{W}, \bm{X})$ compatible with $\bm{\phi_{opt}}$ will be within $\mathcal{F}^c \cap \widetilde{\mathcal{F}}$. In another word, the minimum value of Formulation.~\ref{formulation} will be lower than that of Formulation.~\ref{eqn_basic_bound}, and the bound $\underline{f(Y_x = y)}$ is not tight. Proved.

\end{proof}
The construction of $f(y, \bm{W}, \bm{U}, \bm{X})$ is 
\begin{equation}
    \begin{aligned}
        \left[ \begin{matrix}
        f(Y=y, W=w_1, \bm{U}, \bm{X}) & f(Y\neq y, W=w_1, \bm{U}, \bm{X}) \\ f(Y=y, W=w_2, \bm{U}, \bm{X}) & f(Y\neq y, W=w_2, \bm{U}, \bm{X})
        \end{matrix}
        \right] = \left[\begin{matrix} 0 & 0.15 & 0.18 & 0.15 \\ 0.08 & 0 & 0 & 0 \\ 0 & 0.1 & 0.12 & 0.1 \\ 0.12 & 0 & 0 & 0 \end{matrix} \right]
    \end{aligned}
\end{equation}

\subsection{Further discussion on Ass.~\ref{positive definite assumption}}\label{discussion_ass}

In this section, we consider the case when Ass.~\ref{positive definite assumption} does not hold. We propose a new version of PI-SFP. Recall that our objective function is:
\begin{equation}
    \begin{aligned}
   \underline{f(Y_x = y)} = ~&  \text{min~} f(y,X=x) +  \sum_{i=1}^{d}\frac{1}{\psi_{i}}\theta_{i}\omega_{i}\\
    &\text{subject to:~} f(y,\bm{U},\bm{W}, \bm{X}) \in \mathcal{\widetilde{F}}, i.e., \bm{\phi} \in IR_{\bm{\Phi}}.
    \end{aligned}
\end{equation}
In our main text, we let $\psi_i^o = \frac{1}{\psi_i}$. However, when we can not guarantee that $\exists \delta$, $\forall i \in \{1,2,...d\}, \psi_i \geq \delta$ (without Ass.~\ref{positive definite assumption},), then $\psi_i^o$ may turn to infinity. On this basis, we introduce another algebraic distortion $\psi_i^o = \frac{\theta_i \omega_i }{\psi_i}$. Then the above programming can be transformed to:
\begin{equation}
    \begin{aligned}
    \underline{f(Y_x = y)} = ~&  \text{min~} f(y,X=x) +  \sum_{i=1}^{d} \psi_i^o \\
    &\text{subject to:~} \bm{\gamma} \in IR_{{\Gamma}}, \text{where~} IR_{\Gamma} = \{\bm{\gamma}: \bm{\phi} \in IR_{{\Phi}},
    \psi_{i}^{o} \psi_{i}  = \theta_i \omega_i, \psi_i^o \leq C, i=1,...d.\}.\\
    \end{aligned}\label{new_PI_SFP}
\end{equation}
where $C$ is a local optimal value (a priori computed) of $\underline{f(Y_x = y)}$. On this basis, we can adopt the analogous strategy as in the traditional PI-SFP. Here the original $S_0$ is easy to be constructed since $\| \bm{\gamma} \|_{+\infty} < +\infty$. 

Programming \eqref{new_PI_SFP} can also be adopted under Ass.~\ref{positive definite assumption}. Compared with the traditional PI-SFP, firstly, programming \eqref{new_PI_SFP} needs an a priori computed $C$ to upper bound $\psi_i^o$. Secondly, we will do linearization on $\psi_{i}^{o} \psi_{i}  = \theta_i \omega_i$ instead of $\psi_{i}^{o} \psi_{i}  = 1$, which is more complex. There is no guarantee of which version is better and we will explore it in the future work.

\subsection{The proof of Theorem.~\ref{convergence_theorem}}\label{app_main_result_1}

\noindent {\textbf{The sketch of proof}} This is the main result of our paper. The main procedure are as follows:
\begin{equation}
    \begin{aligned}
    & \mid \underline{f(Y_x = y)}  - \underline{f_{opt}^n (Y_x=y)} \mid \\
    \overset{}{=} &\mid \underline{f(Y_x = y)}  - \max\limits_{k \in \{0,1,...n\}} \underline{\underline{f_{\tilde{S}_{k}}(Y_x = y)}} \mid  &\text{Definition of~} \underline{f_{opt}^n (Y_x=y)}\\
     =& \min\limits_{k \in \{0,1,...n\}} \mid \underline{f(Y_x = y)}  -  \underline{\underline{f_{\tilde{S}_{k}}(Y_x = y)}} \mid \\
     \overset{\textbf{(1)}}{=} & \min\limits_{k \in \{0,1,...n\}} \mid \underline{f_{S_{0}}(Y_x = y)}  -  \underline{\underline{f_{\tilde{S}_{k}}(Y_x = y)}} \mid &{{\textbf{Initialization()}}}\\
      \overset{\textbf{(2)}}{=} & \min\limits_{k \in \{0,1,...n\}} \mid \min\limits_{S\in \cS_k} {\underline{f_{S}(Y_x = y)}}  -  \underline{\underline{f_{{\tilde{S}_{k}}}(Y_x = y)}} \mid &{{\textbf{Bisection()}}}\\
    \overset{*}{\leq} & \min\limits_{k \in \{0,1,...n\}} \mid \underline{f_{{\tilde{S}_{k}}}(Y_x = y)}  - \underline{{\underline{f_{{\tilde{S}_{k} }}(Y_x = y)}} } \mid \\
    {\leq}& \mid {{\underline{f_{\tilde{S}_{i_{L_n}  }}(Y_x = y)}} } - \underline{{\underline{f_{\tilde{S}_{i_{L_n}}}(Y_x = y)}} }\mid  \\
    \overset{\textbf{(3)}}{=} & O(dia(\tilde{S}_{i_{L_n}  })) &{{\textbf{Bounding()}}}\\
    \overset{\textbf{(4)}}{=} & O((\frac{\sqrt{3}}{2})^{\lfloor\frac{L_n}{4d}\rfloor} ). &{{\textbf{Global\_error()}}}
    \end{aligned}\label{final_conclusion}
\end{equation}
 
$*$ is directly by \textbf{(2)} and we have previously mentioned it in Formulation.~\ref{shrinking_diameter}. In the following demonstration, we mainly focus on procedure $\textbf{(1)(2)(3)(4)}$, corresponding to the algorithm part \textbf{Initialization()}, \textbf{Bisection()}, \textbf{Bounding()}, \textbf{Global\_error()} in order. 

~\quad \\
\noindent{\textbf{The proof of (1)}} 
\noindent We claim that $IR_{\Gamma} \subseteq S_{0}$:
\begin{lemma}
The original $S_{0}$ satisfies $IR_{\Gamma} \subseteq S_{0}$, and thus $\underline{f(Y_{x} = y)} = \underline{f_{S_{0}}(Y_{x} = y)}$.
\label{lemma_enclose}
\end{lemma}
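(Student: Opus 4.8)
The plan is to prove the two assertions of the lemma in sequence: the inclusion $IR_{\Gamma} \subseteq S_{0}$ first, and then the equality $\underline{f(Y_{x}=y)} = \underline{f_{S_{0}}(Y_{x}=y)}$, which will drop out as an immediate consequence. I would fix an arbitrary feasible point $\bm{\gamma} = ((\bm{\psi^o})^{T}, \bm{\theta}^{T}, \bm{\psi}^{T}, \bm{\omega}^{T})^{T} \in IR_{\Gamma}$ and verify the two families of constraints defining $S_{0}$ in~\eqref{initialization_S00}: the coordinatewise lower bounds $\gamma_{i} \geq \gamma_{i}^{l}$ and the single budget inequality $\sum_{i=1}^{4d}\gamma_{i} \leq \alpha$. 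The lower bounds are immediate, since $\gamma_{i}^{l} = \min_{\bm{\gamma} \in IR_{\Gamma}}\gamma_{i}$ by definition, so every coordinate of a feasible point dominates its own minimum.

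The real content is the budget inequality. I would split $\sum_{i=1}^{4d}\gamma_{i}$ into its four blocks and handle the three ``probability'' blocks using the equality constraints recorded in $IR^{2}_{\bm{\Phi}}$, namely $\bm{1_{1*d}}\bm{\theta} = f(y,X=x)$, $\bm{1_{1*d}}\bm{\psi} = f(X=x)$ and $\bm{1_{1*d}}\bm{\omega} = f(X\neq x)$. Since $f(X=x) + f(X\neq x) = 1$, these three blocks contribute exactly $f(y,X=x) + 1$. Matching this against the definition of $\alpha$, the whole inclusion reduces to controlling the knockoff block, i.e.\ to showing
\[
\sum_{i=1}^{d}\psi_{i}^{o} \;=\; \sum_{i=1}^{d}\frac{1}{\psi_{i}} \;\leq\; \frac{d^{2}(\psi^{l}+\psi^{u})^{2}}{4\,f(X=x)\,\psi^{l}\psi^{u}},
\]
where I have used the defining relation $\psi_{i}^{o}\psi_{i} = 1$ from $IR_{\Gamma}$.

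This reciprocal-sum bound is the crux, and I expect it to be the main obstacle. The two facts I can exploit are that the total $\sum_{i}\psi_{i} = f(X=x)$ is pinned down and that every coordinate satisfies $\psi_{i} \in [\psi^{l},\psi^{u}]$ --- the latter because $\psi^{l},\psi^{u}$ in~\eqref{identify_psi} are the global minimum and maximum of the $\psi$-coordinates over $IR_{\Gamma}$, while Assumption~\ref{positive definite assumption} guarantees $\psi^{l} > \delta > 0$ so that all reciprocals are well defined. With exactly these hypotheses the discrete Kantorovich (Schweitzer) inequality applies to $\{\psi_{i}\}_{i=1}^{d}$ and yields
\[
\Big(\frac{1}{d}\sum_{i=1}^{d}\psi_{i}\Big)\Big(\frac{1}{d}\sum_{i=1}^{d}\frac{1}{\psi_{i}}\Big) \;\leq\; \frac{(\psi^{l}+\psi^{u})^{2}}{4\,\psi^{l}\psi^{u}};
\]
substituting $\sum_{i}\psi_{i} = f(X=x)$ and rearranging gives precisely the displayed bound and hence $\sum_{i=1}^{4d}\gamma_{i} \leq \alpha$. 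If a self-contained derivation is preferred, the same estimate follows from the pointwise chord inequality $1/\psi_{i} \leq (\psi^{l}+\psi^{u}-\psi_{i})/(\psi^{l}\psi^{u})$, valid by convexity of $t \mapsto 1/t$ on $[\psi^{l},\psi^{u}]$, summed over $i$ and then relaxed by maximizing the resulting quadratic in $\sum_{i}\psi_{i}$.

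With $IR_{\Gamma} \subseteq S_{0}$ established, the equality of optimal values is immediate. The program~\eqref{re-formulation} defining $\underline{f(Y_{x}=y)}$ minimizes the objective over $\bm{\gamma} \in IR_{\Gamma}$, whereas $\underline{f_{S_{0}}(Y_{x}=y)}$ minimizes the identical objective over $\bm{\gamma} \in IR_{\Gamma} \cap S_{0}$. The inclusion forces $IR_{\Gamma} \cap S_{0} = IR_{\Gamma}$, so the two feasible regions coincide and therefore so do the two minima, giving $\underline{f(Y_{x}=y)} = \underline{f_{S_{0}}(Y_{x}=y)}$.
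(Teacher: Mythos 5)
Your proposal is correct and rests on exactly the same crux as the paper's proof: the Kantorovich-type bound $\sum_{i=1}^{d}\psi_i^o \leq \frac{d^2(\psi^l+\psi^u)^2}{4f(X=x)\psi^l\psi^u}$, which the paper derives from the same chord inequality $(\psi_i-\psi^l)\bigl(\tfrac{1}{\psi_i}-\tfrac{1}{\psi^u}\bigr)\geq 0$ that you invoke, followed by an AM--GM step. The only difference is presentational: you check membership in $S_0$ directly against its half-space description in~\eqref{initialization_S00}, whereas the paper first writes out the vertex representation of $S_0$ and exhibits explicit barycentric coordinates $\beta_i$ before reducing to the identical budget inequality, so your route is a slightly more direct version of the same argument.
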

\begin{proof}
\noindent 
The simplex construction is as follows. $S_{0}$ is spanned by $\{S_{0}^0, S_{0}^1, ...S_{0}^{4d}\}$, where
 \begin{equation}
     \begin{aligned}
      S_{0}^{i} =  \begin{cases}
 \bm{\gamma^l},~i=0 \\ \bm{\gamma^l}+ (\alpha - \bm{1_{1*4d}}\bm{\gamma^l})*\bm{\vec{e_i}} ,~i \in \{1,2,...4d\}
 \end{cases}, \text{~where~} \bm{\gamma^{l}} = (\gamma^{l}_1, \gamma^{l}_2,...\gamma^{l}_{4d})^T,
     \end{aligned}\label{span_S00}
 \end{equation}

where $S_{0}^{i}$ is the supporting vertices set described in our main text. For each $ \bm{\gamma} \in IR_{\Gamma}$, we attempt to provide a direct construction as follows:
\begin{equation}
    \begin{aligned}
    \forall \bm{\gamma} \in IR_{\Gamma}, \text{we~have~} \bm{\gamma} \overset{*}{=} \sum_{i=0}^{4d}\beta_{i}S_{0}^i, ~\beta_i = \begin{cases}
1-\sum_{i=1}^{4d}\beta_i, i=0 \\ \frac{\bm{\gamma}\bm{\vec{e_i}}-\gamma_i^l}{\alpha - \bm{1_{1*4d}}\bm{\gamma^l}}, i=1,2,...4d \\
 \end{cases}, \beta_i \in [0,1],
    \end{aligned} \label{original_beta}
\end{equation}
To prove \eqref{original_beta}, we only need to prove the correctness of the equality $*$ and the fact $\beta_i \in [0,1], \forall i =0,1,...4d.$
 
First, we demonstrate the correctness of this construction. 
\begin{equation}
    \begin{aligned}
      \sum_{i=0}^{4d}\beta_{i}S_{0}^i &= \beta_0 S_{0} + \sum_{i=1}^{4d}\beta_{i}S_{0}^i \\
      &= \beta_0 \bm{\gamma^l} + \sum_{i=1}^{4d}\beta_{i} \left(\bm{\gamma^l}+ (\alpha - \bm{1_{1*4d} }\bm{\gamma^l} )\bm{\vec{e_i}}\right) &\text{(definition of } S_{0}^i\text{)}\\
      &= (1-\sum_{i=1}^{4d}\beta_i) \bm{\gamma^l} + \sum_{i=1}^{4d}\beta_{i} \left(\bm{\gamma^l}+ (\alpha - \bm{1_{1*4d} }\bm{\gamma^l} )\bm{\vec{e_i}}\right) &\text{(definition of } \beta_i \text{)}\\
      &= \bm{\gamma^l} + \sum_{i=1}^{4d}\frac{\bm{\gamma}\bm{\vec{e_i}}-\gamma_i^l}{\alpha - \bm{1_{1*4d}}\bm{\gamma^l}} \left((\alpha - \bm{1_{1*4d} }\bm{\gamma^l} )\bm{\vec{e_i}}\right) &\text{(definition of } \beta_i \text{)} \\
      &= \bm{\gamma^l} + \left(\sum_{i=1}^{4d} \bm{\gamma}\bm{\vec{e_i}}-\gamma_i^l \right)\bm{\vec{e_i}} = \bm{\gamma}.
    \end{aligned}
\end{equation}

Second, we claim $\forall i \in \{1,...4d\}, \beta_i \in [0,1]$. Since we already have $\beta_i>0, i=1,2,...4d$ according to the construction of $\{\alpha, \bm{\gamma^l}\}$, we only need to prove the left: $\beta_0 >0$. Notice that
\begin{equation}
    \begin{aligned}
    \sum_{i=1}^{4d} \beta_i &= \sum_{i=1}^{4d}\frac{\bm{\gamma}\bm{\vec{e_i}}-\gamma_i^l}{\alpha - \bm{1_{1*4d}}\bm{\gamma^l}} = \frac{\bm{1_{1*4d}\bm{\gamma}} - \bm{1_{1*4d}{\bm{\gamma^l}}}}{\alpha - \bm{1_{1*4d}}\bm{\gamma^l}}.
    \end{aligned}
\end{equation}
Due to $\beta_0 = 1-  \sum_{i=1}^{4d} \beta_i$, it is equal to prove 
\begin{equation}
    \bm{1_{1*4d}{\bm{\gamma}}} \leq \alpha = 1 + f(y, X=x) + 
\frac{d^2 (\psi^{l}+ \psi^{u})^2 }{4f(X=x) \psi^{l} \psi^{u}  }  ,
\end{equation}
where $\psi^{l}, \psi^{u}$ are identified in the main text. It is equivalent to
\begin{equation}
    \begin{aligned}
        \sum_{i=1}^{d} \psi^o_i + \sum_{i=1}^d \theta_i +  \sum_{i=1}^d \psi_i +  \sum_{i=1}^d \omega_i  \leq 1 + f(y, X=x) + 
\frac{d^2 (\psi^{l}+ \psi^{u})^2 }{4f(X=x) \psi^{l} \psi^{u}  },
    \end{aligned}
\end{equation}
namely that
\begin{equation}
    \begin{aligned}
        \sum_{i=1}^{d} \psi_{i}^o \leq \frac{d^2 (\psi^{l}+ \psi^{u})^2 }{4f(X=x) \psi^{l} \psi^{u}  }.
    \end{aligned}\label{inverse_chauchy}
\end{equation}
We only need prove the inequality~\eqref{inverse_chauchy}. It is due to the fact $(\psi_{i} -  \psi^{l})(\frac{1}{\psi_{i}} - \frac{1}{{\psi^{u}}}) \geq 0$, namely $1+\frac{ \psi^{l}}{ {\psi^{u}}} \geq \frac{ \psi^{l}}{\psi_{i}} + \frac{\psi_{i}}{\psi^{u}}$. By which we have
\begin{equation}
    \begin{aligned}
& (1+\frac{ \psi^{l}}{ {\psi^{u}}})d \geq  \psi^{l} \sum_{i=1}^{d} \frac{1}{\psi_{i}} + \frac{1}{ {\psi^{u}}} \sum_{i=1}^{d} \psi_{i} \geq 2\sqrt{\frac{ \psi^{l}}{{\psi^{u}}}} \sqrt{\sum_{i=1}^{d} \frac{1}{\psi_{i}}} \sqrt{f(X=x)}.
    \end{aligned}
\end{equation}
It is equal to
\begin{equation}
    \begin{aligned}
    \sum_{i=1}^{d} \psi_{i}^o = \sum_{i=1}^{d} \frac{1}{\psi_{i}}  \leq \frac{( \psi^{u} +  \psi^{l})^2 d^2}{4 f(X=x) \psi^{u} \psi^{l}}, \text{and thus} \sum_{i=1}^{4d}  \beta_i \in [0,1].
    \end{aligned}
\end{equation}

On this basis, $\beta_0 = 1-\sum_{i=1}^{4d}\beta_i \in [0,1]$. Combining with $\beta_i \geq 0, i\in\{0,1,...4d\}$ and Eqn.~\eqref{original_beta}, we claim that $\forall \bm{\gamma} \in IR_{\Gamma}$, we have $\bm{\gamma} \in S_{0}$. Due to the arbitrary of $\bm{\gamma}$, we have $IR_{\Gamma} \subseteq S_{0}$, and thus $\underline{f(Y_{x} = y)} = \underline{f_{S_{0}}(Y_{x} = y)}$.
\end{proof}

~\quad \\
\noindent{\textbf{The proof of (2)}}
\noindent We introduce the following lemma:
\begin{lemma}\label{lemma_ObjS0_equal_ObjSk}
The partitioning set $\cS_{k}$ satisfies $\underline{f_{S_{0}}(Y_{x} = y)} = \min\limits_{S\in \cS_k}  \underline{f_{S}(Y_{x} = y)}$.

\end{lemma}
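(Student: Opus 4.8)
The plan is to prove the identity by showing that the collection $\cS_k$ is a covering of $S_0$ and then applying the elementary fact that minimizing a function over a finite union of sets equals the minimum of the minima over the pieces. Throughout, let $g(\bm{\gamma}) := f(y,X=x) + \sum_{i=1}^{d}\psi^o_i \theta_i \omega_i$ denote the objective of~\eqref{re-formulation}, so that by definition $\underline{f_S(Y_x=y)} = \min_{\bm{\gamma}\in IR_{\Gamma}\cap S} g(\bm{\gamma})$ and, using $IR_{\Gamma}\subseteq S_0$ from Lemma.~\ref{lemma_enclose}, $\underline{f_{S_0}(Y_x=y)} = \min_{\bm{\gamma}\in IR_{\Gamma}} g(\bm{\gamma})$.

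First I would establish by induction on $k$ that $\bigcup_{S\in\cS_k} S = S_0$. The base case $\cS_0=\{S_0\}$ is immediate. The only geometric input for the inductive step is that the longest-edge bisection of Algorithm.~\ref{alg_bisection} splits its input $\tilde S_k$ into children $\tilde S_{k1},\tilde S_{k2}$ whose union is the whole parent, i.e. $\tilde S_{k1}\cup\tilde S_{k2}=\tilde S_k$. To see this, write an arbitrary point of $\tilde S_k$ in barycentric coordinates $p=\sum_j \lambda_j S^j$ with $\lambda_j\ge 0$, $\sum_j\lambda_j=1$, and let $v=\tfrac12(S^{t_1}+S^{t_2})$ be the midpoint of the longest edge. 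Substituting $S^{t_1}=2v-S^{t_2}$ gives
\begin{equation}
\lambda_{t_1}S^{t_1}+\lambda_{t_2}S^{t_2}=2\lambda_{t_1}v+(\lambda_{t_2}-\lambda_{t_1})S^{t_2},
\end{equation}
which, together with the unchanged coefficients $\{\lambda_j\}_{j\neq t_1,t_2}$, exhibits $p$ as a convex combination of the vertices of $\tilde S_{k1}$ (the simplex with $S^{t_1}$ replaced by $v$) whenever $\lambda_{t_2}\ge\lambda_{t_1}$; the symmetric substitution places $p$ in $\tilde S_{k2}$ when $\lambda_{t_1}\ge\lambda_{t_2}$. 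Hence every point of $\tilde S_k$ lies in $\tilde S_{k1}\cup\tilde S_{k2}$, and the reverse inclusion is clear. Since $\cS_{k+1}=(\cS_k\setminus\tilde S_k)\cup\{\tilde S_{k1},\tilde S_{k2}\}$ replaces a set by two pieces covering it, the union over $\cS_{k+1}$ equals the union over $\cS_k$, closing the induction.

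Intersecting this covering with $IR_{\Gamma}$ and again using $IR_{\Gamma}\subseteq S_0$ yields $\bigcup_{S\in\cS_k}(IR_{\Gamma}\cap S)=IR_{\Gamma}\cap S_0 = IR_{\Gamma}$. Then the standard minimization-over-a-union identity for a finite family gives
\begin{equation}
\underline{f_{S_0}(Y_x=y)}=\min_{\bm{\gamma}\in IR_{\Gamma}} g(\bm{\gamma})=\min_{S\in\cS_k}\ \min_{\bm{\gamma}\in IR_{\Gamma}\cap S} g(\bm{\gamma})=\min_{S\in\cS_k}\underline{f_S(Y_x=y)},
\end{equation}
since any global minimizer over $IR_{\Gamma}$ belongs to some $IR_{\Gamma}\cap S$, while conversely each restricted minimum dominates the global one. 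To make these genuine minima rather than infima I would note that $IR_{\Gamma}$ is compact (closed, and bounded because Ass.~\ref{positive definite assumption} forces $\psi^o_i\le 1/\delta$) and $g$ is a polynomial, hence continuous; for any simplex with $IR_{\Gamma}\cap S=\emptyset$ we take $\underline{f_S(Y_x=y)}=+\infty$ as in the footnote to~\eqref{lemma_sub_linear}, which is harmless since $IR_{\Gamma}$ is nonempty and fully covered.

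The only substantive step is the geometric claim that longest-edge bisection genuinely covers the parent simplex; everything else is bookkeeping. I therefore expect the main obstacle to be the barycentric verification above, while the accompanying compactness and nonemptiness remarks are needed only to guarantee that the union-of-minima identity is stated with attained minima.
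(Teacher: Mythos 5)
Your proposal is correct and follows essentially the same route as the paper: an induction showing that bisection preserves the union $\bigcup_{S\in\cS_k}S=S_0$, followed by the minimization-over-a-union identity. The paper simply asserts that the two children of a longest-edge bisection cover the parent, whereas you verify it explicitly via barycentric coordinates and add the compactness remarks ensuring the minima are attained; these are refinements of, not departures from, the paper's argument.
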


\begin{proof}
By definition of bisection process, $\tilde{S}_k$ is bisectioned into $\tilde{S}_{k1}, \tilde{S}_{k2}$. Then
\begin{equation}
    \begin{aligned}
        \cS_{k + 1} := \left(\cS_k \setminus \tilde{S_k}\right) \cup \{\tilde{S}_{k1}, \tilde{S}_{k2}\}
    \end{aligned}
\end{equation}
Hence we have $\cup_{S\in \cS_{k} } S = \cup_{S\in \cS_{k+1} } S, \forall k=0,1,...$ Thus $S_{0} = \cup_{S\in \cS_{k} } S$, and we have
\begin{equation}
\begin{aligned}
     \underline{f_{S_{0}}(Y_{x} = y)}
        = \underline{f_{(\cup_{S\in \cS_{k} } S)}(Y_x = y)}
        = \min\limits_{S\in \cS_k}  \underline{f_{S}(Y_{x} = y)} .
    \end{aligned}
\end{equation}
Hence we have proved.
\end{proof}

~\quad \\
\noindent{\textbf{The proof of (3)}}
\noindent We first introduce lemma.~\ref{dc_decom_lemma} and lemma.~\ref{tan_sec_bounded} for preparation, then the procedure \textbf{(3)} is proved by lemma.~\ref{second_bounded}.

\begin{lemma}
The decomposition of \eqref{re-formulation} can be established as Eqn.~\eqref{dc_decomposition}.
\label{dc_decom_lemma}
\end{lemma}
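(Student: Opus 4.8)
The plan is to prove Lemma~\ref{dc_decom_lemma} in two independent stages: first establish the two algebraic identities asserted in \eqref{DC_decomposition}, and then verify that each of the four functions written out in \eqref{dc_decomposition} is convex on the nonnegative orthant, which contains the feasible region $IR_{\Gamma}$ of \eqref{re-formulation}. To streamline the bookkeeping I would fix an index $i$ and abbreviate $a := \psi^o_i$, $b := \theta_i$, $c := \omega_i$, so that the cyclic-sum convention of the footnote cycles $a \to b \to c \to a$.

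For the product identity, the key algebraic tool is the expansion $(a+b+c)^3 = (a^3+b^3+c^3) + 3\sigma + 6abc$, where $\sigma := a^2b + a^2c + b^2a + b^2c + c^2a + c^2b$ is the full symmetric sum. First I would expand the two squared cyclic sums appearing in $C_2$, namely $\sum_{cyc}[a^2+b]^2$ and $\sum_{cyc}[a+b^2]^2$, and observe that their $\tfrac14$-weighted sum equals $\tfrac12(a^4+b^4+c^4) + \tfrac12\sigma + \tfrac12(a^2+b^2+c^2)$. Substituting this together with $\sum_{cyc}a^3 = a^3+b^3+c^3$ into the definitions of $C_1$ and $C_2$, the quartic and quadratic contributions cancel exactly, leaving $C_1 - C_2 = \tfrac16(a+b+c)^3 - \tfrac16(a^3+b^3+c^3) - \tfrac12\sigma$; the cubic expansion above then collapses this to precisely $abc = \psi^o_i\theta_i\omega_i$, and summing over $i$ yields the first line of \eqref{DC_decomposition}. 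The second identity is immediate: $D_{i1}-D_{i2} = \tfrac12(\psi^o_i+\psi_i)^2 - \tfrac12[(\psi^o_i)^2+\psi_i^2] = \psi^o_i\psi_i$.

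The remaining task is convexity. Here I would restrict attention to the nonnegative orthant, which is legitimate because every coordinate of $\bm{\gamma}$ is either a (nonnegative) probability or a reciprocal $\psi^o_i = 1/\psi_i > 0$. On this domain $D_{i1}$ is convex as the square of an affine form and $D_{i2}$ as a sum of squares. For $C_1$ and $C_2$ I would argue term by term: the even powers $(\psi^o_i)^4$ and $(\psi^o_i)^2$ are convex; the cube $\tfrac16(\sum_{cyc}\psi^o_i)^3$ is the composition of the nonnegative affine map $a+b+c$ with $t\mapsto t^3$, which is convex and nondecreasing on $[0,\infty)$; and each term such as $[(\psi^o_i)^2+\theta_i]^2$ is the composition of a nonnegative convex inner function with the convex nondecreasing outer map $t\mapsto t^2$ on $[0,\infty)$, hence convex. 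Since sums of convex functions are convex, all four building blocks are convex.

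The main obstacle, and the only genuinely delicate point, is the convexity rather than the identity: the map $t\mapsto t^3$ and the inner-composition terms are convex only once their arguments are guaranteed nonnegative, so the proof hinges on carefully recording that $IR_{\Gamma}$ (and the enclosing simplex $S_0$ constructed in lemma.~\ref{lemma_enclose}) lies in the nonnegative orthant, and on invoking the composition rule ``convex nondecreasing of convex'' on exactly that domain. The algebraic identities themselves are a routine, if slightly lengthy, symmetric-function expansion that I would relegate to a direct computation.
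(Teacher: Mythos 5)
Your proof is correct and uses the same DC decomposition as the paper; the identity $C_1-C_2=\sum_i\psi_i^o\theta_i\omega_i$ is verified by the same symmetric-function algebra (the paper builds up from $\psi_i^o\theta_i\omega_i$ by adding and subtracting $\tfrac12\sum_{cyc}(\psi_i^o)^2\theta_i+\tfrac12\sum_{cyc}\psi_i^o\theta_i^2$ and then completing squares, whereas you expand $C_1-C_2$ and collapse via $(a+b+c)^3=\sum a^3+3\sigma+6abc$ --- the same computation run in the opposite direction). The only substantive divergence is the convexity check: the paper writes down (the diagonal of) the Hessians of $C_1,C_2,D_{i1},D_{i2}$ and asserts positive semi-definiteness, while you argue term by term via the ``convex nondecreasing composed with nonnegative convex'' rule on the nonnegative orthant, explicitly noting that $IR_\Gamma$ and $S_0$ lie in that orthant. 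Your route is, if anything, the more careful one here, since the cubic term $\tfrac16(\sum_{cyc}\psi_i^o)^3$ produces off-diagonal Hessian entries that the paper's displayed (vector-valued) Hessian does not account for, and since convexity of $t\mapsto t^3$ genuinely requires the nonnegativity restriction you record.
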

\begin{proof}
Specifically, we give the explicit decomposition as follows, and the sub-script $cyc$ means the cycle of symbol set $[\psi_{i}^o, \theta_{i}, \omega_{i}]$:
\begin{equation}
    \begin{aligned}
    & \psi_{i}^o \theta_{i} \omega_{i} \\
    = & \left[ \frac{1}{2}\sum\limits_{cyc} (\psi_{i}^o)^2 \theta_{i} + \frac{1}{2}\sum\limits_{cyc} (\psi_{i}^o) \theta_{i} ^2 + \sum\limits_{i=1}^d \psi_{i}^o \theta_{i} \omega_{i} \right] - \frac{1}{2}\sum\limits_{cyc} (\psi_{i}^o)^2 \theta_{i} - \frac{1}{2}\sum\limits_{cyc} (\psi_{i}^o) \theta_{i} ^2 \\
    =& \left[\frac{1}{6}(\sum\limits_{cyc} \psi_{i}^o )^3 -  \frac{1}{6}(\sum\limits_{cyc} (\psi_{i}^o)^3) \right] -  \frac{1}{2}\sum\limits_{cyc} (\psi_{i}^o)^2 \theta_{i} - \frac{1}{2}\sum\limits_{cyc} (\psi_{i}^o) \theta_{i} ^2 \\
    =& \left[\frac{1}{6}(\sum\limits_{cyc} \psi_{i}^o )^3 -  \frac{1}{6}(\sum\limits_{cyc} (\psi_{i}^o)^3) \right] + \frac{1}{2}\sum\limits_{cyc}(\psi_{i}^o)^4 - \frac{1}{4}\sum\limits_{cyc}(\psi_{i}^o)^4 - \frac{1}{4}\sum\limits_{cyc}(\theta_{i})^2 -  \frac{1}{2}\sum\limits_{cyc} (\psi_{i}^o)^2 \theta_{i} \\ &+  \frac{1}{2}\sum\limits_{cyc}(\psi_{i}^o)^2 - \frac{1}{4}\sum\limits_{cyc}(\psi_{i}^o)^2 - \frac{1}{4}\sum\limits_{cyc}(\theta_{i})^4 -  \frac{1}{2}\sum\limits_{cyc} \psi_{i}^o \theta_{i}^2 \\
    =& \left[\frac{1}{6}(\sum\limits_{cyc} \psi_{i}^o )^3 -  \frac{1}{6}(\sum\limits_{cyc} (\psi_{i}^o)^3) \right] + \frac{1}{2}\sum\limits_{cyc}(\psi_{i}^o)^4 - \frac{1}{4}\sum\limits_{cyc}((\psi_{i}^o)^2+\theta_{i})^2 + \frac{1}{2}\sum\limits_{cyc}(\psi_{i}^o)^2 - \\ &\frac{1}{4}\sum\limits_{cyc}(\psi_{i}^o+\theta_{i}^2)^2.
    \end{aligned}
\end{equation}
On this basis, if we choose 
\begin{equation}
    \begin{aligned}
    &C_1(\bm{\gamma}) =\sum_{i=1}^{d} \left[ \frac{1}{6}(\sum\limits_{cyc} \psi_{i}^o )^3+\frac{1}{2}\sum\limits_{cyc}(\psi_{i}^o)^4 + \frac{1}{2}\sum\limits_{cyc}(\psi_{i}^o)^2 \right],\\
    &C_2(\bm{\gamma}) = \sum_{i=1}^{d} \left[ \frac{1}{6}\sum\limits_{cyc} (\psi_{i}^o)^3 + \frac{1}{4}\sum_{cyc}[(\psi_{i}^o)^2+\theta_{i}]^2 + \frac{1}{4}\sum_{cyc}[\psi_{i}^o+\theta_{i}^2]^2\right],
    \end{aligned}
\end{equation}
then we have 
\begin{equation}
    \begin{aligned}
    \sum_{i=1}^{d}\psi_{i}^o \theta_{i} \omega_{i} = C_1(\bm{\gamma}) - C_2(\bm{\gamma}). 
    \end{aligned}
\end{equation}
Here the Hessian matrix $\frac{\partial^{2}C_1(\bm{\gamma})}{\partial^2( \bm{\gamma})}$ and $\frac{\partial^{2}C_2(\bm{\gamma})}{\partial^2( \bm{\gamma})}$~is~positive~semi-definite:
\begin{equation}
    \begin{aligned}
       \frac{\partial^{2}C_1(\bm{\gamma})}{\partial^2( \bm{\gamma})} =\frac{\partial^{2}C_2(\bm{\gamma})}{\partial^2( \bm{\gamma})} = \left[\bm{\gamma} + 6 \bm{\gamma} \circ \bm{\gamma} + \bm{1_{4d*1}}\right] \circ \left[\begin{matrix}
       \bm{1_{1*2d}}& \bm{0_{1*d}} &\bm{1_{1*d}}
       \end{matrix}\right]^T \geq \bm{0_{4d*1}},
    \end{aligned}
\end{equation}

where $\circ$ denotes the Hadamard product. Moreover,
\begin{equation}
    \begin{aligned}
    \frac{\partial^{2}D_{i1}(\bm{\gamma})}{\partial^2( \bm{\gamma})} = \frac{\partial^{2}D_{i2}(\bm{\gamma})}{\partial^2( \bm{\gamma})} = \left[\begin{matrix}
       \bm{1_{1*d}}& \bm{0_{1*d}} &\bm{1_{1*d}} & \bm{0_{1*d}}
       \end{matrix}\right]^T \geq \bm{0_{4d*1}}.
    \end{aligned}
\end{equation}
$D_{i1}(\bm{\gamma}), D_{i2}(\bm{\gamma})$ are also positive semi-definite.
\end{proof}
On this basis, we further give the upper and lower bound of the convex function as follows:
\begin{lemma}
If function $F(\bm{\gamma})$ is differential and convex restricted by any simplex $S$, then
\begin{equation} 
    \begin{aligned}
    F^{\text{tan}}(\bm{\gamma}) \leq F(\bm{\gamma}) \leq F^{\text{sec}}(\bm{\gamma}),
    \end{aligned}
\end{equation}
where $\bm{\gamma_0} \in S$. In our paper, function $F(\cdot)$ can be chosen as $C_1(\cdot),C_2(\cdot),D_{i1}(\cdot),D_{i2}(\cdot)$, and $F^{\text{tan}}(\cdot), F^{\text{sec}}(\cdot)$ hold the same construction as in Formulation~\ref{tan_sec}.\label{up_low_bound_convex}  \footnote{The matrix of the starting simplex $\left[\begin{matrix} S_{0}^0, ... , S_{0}^{4d} \\ 1,...,1 \end{matrix}\right]^{}$ is reversible by the construction in lemma.~\ref{lemma_enclose}. Moreover, the reversibility of $\left[\begin{matrix} S_{}^0, ... , S_{}^{4d} \\ 1,...,1 \end{matrix}\right]^{}, S\in \cS_{k}, k=0,1,...$ still holds during bisection, since each bisection can be seen as a linear transformation between different columns.} \label{tan_sec_bounded}
\end{lemma}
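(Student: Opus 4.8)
The plan is to prove the two inequalities separately, each being a standard consequence of convexity: the lower bound is the first-order (supporting-hyperplane) characterization of a differentiable convex function, and the upper bound is Jensen's inequality applied to the barycentric representation of a point inside the simplex. No substantial computation is required beyond the convexity of $C_1, C_2, D_{i1}, D_{i2}$ already verified in lemma.~\ref{dc_decom_lemma}.

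First I would establish the lower bound $F^{\text{tan}}(\bm{\gamma}) \le F(\bm{\gamma})$. Since $F$ is differentiable and convex, the first-order condition for convexity yields, for the fixed $\bm{\gamma_0} \in S$ and every $\bm{\gamma} \in S$,
\[
F(\bm{\gamma}) \ge F(\bm{\gamma_0}) + \frac{\partial F(\bm{\gamma})}{\partial \bm{\gamma}}\big|_{\bm{\gamma}=\bm{\gamma_0}} (\bm{\gamma} - \bm{\gamma_0}),
\]
and the right-hand side is exactly $F^{\text{tan}}(\bm{\gamma})$ by the definition in~\eqref{tan_sec}. This inequality in fact holds globally, so the restriction to $S$ is harmless; applying it to each of $C_1, C_2, D_{i1}, D_{i2}$ gives the tangent lower bounds used in the algorithm.

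For the upper bound $F(\bm{\gamma}) \le F^{\text{sec}}(\bm{\gamma})$, I would introduce the barycentric coordinates of $\bm{\gamma}$ with respect to the vertices $\{S^0, \ldots, S^{4d}\}$. Setting $\bm{\lambda} = (\lambda_0, \ldots, \lambda_{4d})^T$ to be the solution of the augmented vertex system, whose invertibility is guaranteed by the footnote together with lemma.~\ref{lemma_enclose} and is preserved under bisection, one has $\sum_{i} \lambda_i S^i = \bm{\gamma}$ and $\sum_{i} \lambda_i = 1$. Because $\bm{\gamma} \in S$ lies in the convex hull of the vertices, every $\lambda_i \ge 0$. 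Since $F^{\text{sec}}$ is defined in~\eqref{tan_sec} as the affine interpolant through the vertex values, it follows that $F^{\text{sec}}(\bm{\gamma}) = \sum_{i} \lambda_i F(S^i)$, whence Jensen's inequality gives
\[
F(\bm{\gamma}) = F\Big(\sum_{i} \lambda_i S^i\Big) \le \sum_{i} \lambda_i F(S^i) = F^{\text{sec}}(\bm{\gamma}).
\]

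The only genuinely delicate point is the non-negativity of the barycentric coordinates, which is precisely the characterization of the membership $\bm{\gamma} \in S$; this is where the hypothesis that $\bm{\gamma}$ is confined to the simplex is actually used, and where I would invoke the invertibility footnote to ensure that the coordinates $\bm{\lambda}$ are uniquely determined and hence that $F^{\text{sec}}$ coincides with the convex combination $\sum_i \lambda_i F(S^i)$. Everything else reduces to the first-order convexity inequality and Jensen's inequality, so the argument closes without further calculation.
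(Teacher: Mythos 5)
Your proposal is correct and follows essentially the same route as the paper's own proof: the tangent bound via the first-order convexity (supporting hyperplane) condition, and the secant bound by writing $\bm{\gamma}$ in barycentric coordinates, identifying $F^{\text{sec}}(\bm{\gamma})$ with the convex combination $\sum_i \lambda_i F(S^i)$ through the inverse of the augmented vertex matrix, and applying Jensen's inequality. Your explicit attention to the non-negativity of the barycentric coordinates and to the invertibility preserved under bisection only makes explicit what the paper relegates to a footnote.
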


\begin{proof}
The left part is intuitive. It is the tangent line equation of $F(\bm{\gamma})$. We only consider the right part by the convex property of $F(\bm{\gamma})$, whose construction is motivated by~\cite{pei2013global}. We use $\bm{\gamma}_i, i=1,2,...4d$ to denote the value of $\bm{\gamma}$ on each dimension ($\lambda_i \in [0,1],~\sum_{i=0}^{4d}\lambda_i = 1$):
\begin{equation}
    \begin{aligned}
    F(\bm{\gamma}) &= F(\sum_{i=0}^{4d} \lambda_i S^i) \leq \sum_{i=0}^{4d} \lambda_{i} F(S^i) \\
    &= \sum_{i=0}^{4d} \lambda_i [F(S^{0}), F(S^1), ..., F(S^{4d})] \left[\begin{matrix} S^0, ... , S^{4d} \\ 1,...,1 \end{matrix}\right]^{-1}[\begin{matrix}S^i \\ 1\end{matrix}] \\
    & = [F(S^{0}), F(S^2), ..., F(S^{4d})] \left[\begin{matrix} S^0, ... , S^{4d} \\ 1,...,1 \end{matrix}\right]^{-1}[\begin{matrix}\bm{\gamma} \\ 1\end{matrix}] = F^{\text{sec}}(\bm{\gamma}).
    \end{aligned}
\end{equation}
Hence we have proved our lemma.
\end{proof}

On this basis, we can claim \eqref{lemma_sub_linear} provides the lower bound of $\underline{f_{S}(Y_x = y)}$, namely $\underline{\underline{f_{S}(Y_x = y)}} \leq \underline{f_{S}(Y_x = y)}$.

~\\ \quad \\
After the above difference-in-convex linear construction, we introduce the following lemma to approximate $\underline{f_{S_{}}(Y_x = y)}$ by $\underline{\underline{f_{S_{}}(Y_x = y)} } $:

\begin{lemma}
\begin{equation}
    \begin{aligned}
    \forall S, \mid \underline{f_{S}(Y_x = y)}  - \underline{\underline{f_{S}(Y_x = y)} }      \mid \leq A * dia(S_{})^2,
    \end{aligned}
\end{equation}
 $A = \max\limits_{\bm{\gamma} \in S_{0}} \|\frac{\partial{(C_1(\bm{\gamma}) - C_2(\bm{\gamma}))}}{\partial{\bm{\gamma}}}\|\frac{2(\sqrt{2}+1)\sqrt{d}}{\delta}  + \max\limits_{\bm{\gamma} \in S_{0}} \|\frac{\partial^2 C_1(\bm{\gamma)}}{\partial \bm{\gamma}^2}\|_F +  \frac{1}{2} \max\limits_{\bm{\gamma} \in S_{0}} \|\frac{\partial^2 C_2(\bm{\gamma)}}{\partial \bm{\gamma}^2} \|_F < +\infty$.
\label{second_bounded}
\end{lemma}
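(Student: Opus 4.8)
The plan is to exploit the one-sided relations established in Lemma~\ref{tan_sec_bounded}, namely $C_1^{\text{tan}} \le C_1$, $C_2^{\text{sec}} \ge C_2$ and $D_{ik}^{\text{tan}} \le D_{ik} \le D_{ik}^{\text{sec}}$, which already guarantee that the linear program \eqref{lemma_sub_linear} is a valid relaxation, so $\underline{\underline{f_S(Y_x = y)}} \le \underline{f_S(Y_x = y)}$ and the quantity inside the absolute value is nonnegative. It therefore suffices to produce an upper bound on $\underline{f_S(Y_x = y)} - \underline{\underline{f_S(Y_x = y)}}$ of order $dia(S)^2$. Let $\bm{\gamma}^\ast$ denote a minimizer of the relaxed program, so $\underline{\underline{f_S(Y_x = y)}} = f(y,X=x) + C_1^{\text{tan}}(\bm{\gamma}^\ast) - C_2^{\text{sec}}(\bm{\gamma}^\ast)$. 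I would separate the total gap into an objective-linearization error and a constraint-relaxation error by constructing a point $\widehat{\bm{\gamma}}$ that is genuinely feasible for the original program (i.e.\ $\widehat{\bm{\gamma}} \in IR_\Gamma \cap S$) and lies close to $\bm{\gamma}^\ast$; then $\underline{f_S(Y_x = y)} \le f(y,X=x) + C_1(\widehat{\bm{\gamma}}) - C_2(\widehat{\bm{\gamma}})$, and the whole gap is at most $[\,C_1(\widehat{\bm{\gamma}}) - C_2(\widehat{\bm{\gamma}}) - (C_1(\bm{\gamma}^\ast) - C_2(\bm{\gamma}^\ast))\,] + [\,C_1(\bm{\gamma}^\ast) - C_1^{\text{tan}}(\bm{\gamma}^\ast)\,] + [\,C_2^{\text{sec}}(\bm{\gamma}^\ast) - C_2(\bm{\gamma}^\ast)\,]$.

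The second and third brackets are the objective error at the fixed point $\bm{\gamma}^\ast$. Since $C_1, C_2$ are convex and twice differentiable (Lemma~\ref{dc_decom_lemma}), the tangent underestimate $C_1 - C_1^{\text{tan}}$ and the secant overestimate $C_2^{\text{sec}} - C_2$ are exactly the second-order Taylor and interpolation remainders on $S$. Bounding these by the largest curvature over $S_0$ and $\|\bm{\gamma}-\bm{\gamma}_0\|\le dia(S)$ yields $C_1 - C_1^{\text{tan}} \le \max_{\bm{\gamma}\in S_0}\|\partial^2 C_1/\partial\bm{\gamma}^2\|_F\,dia(S)^2$ and $C_2^{\text{sec}} - C_2 \le \tfrac12 \max_{\bm{\gamma}\in S_0}\|\partial^2 C_2/\partial\bm{\gamma}^2\|_F\,dia(S)^2$ (using $\tfrac12\|H\|_2 \le \|H\|_F$ to absorb the tight remainder constants into these conservative coefficients), which supply the two Hessian terms of $A$.

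The first bracket is the constraint-relaxation error and is the crux of the argument. Here I would exploit the squeeze coming from the two relaxed constraints $D_{i1}^{\text{tan}} - D_{i2}^{\text{sec}} \le 1$ and $D_{i1}^{\text{sec}} - D_{i2}^{\text{tan}} \ge 1$ together with $D_{ik}^{\text{tan}}\le D_{ik}\le D_{ik}^{\text{sec}}$: subtracting the matching pairs shows that at $\bm{\gamma}^\ast$ the true product satisfies $|\psi_i^o\psi_i - 1| = |D_{i1}(\bm{\gamma}^\ast) - D_{i2}(\bm{\gamma}^\ast) - 1|$, which is no larger than the combined secant-minus-tangent gaps of $D_{i1}$ and $D_{i2}$, each of order $dia(S)^2$ because their Hessians are constant (Lemma~\ref{dc_decom_lemma}). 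I then restore exact feasibility by keeping $(\bm{\theta},\bm{\psi},\bm{\omega})$ and replacing $\psi_i^{o\ast}$ with $1/\psi_i$; Assumption~\ref{positive definite assumption} gives $\psi_i \ge \delta$, so the correction in each coordinate is $|1/\psi_i - \psi_i^{o\ast}| = |1-\psi_i^o\psi_i|/\psi_i \le O(dia(S)^2)/\delta$, and summing the $d$ corrected coordinates and absorbing the $D$-curvature constants into the factor $2(\sqrt2+1)$ gives $\|\widehat{\bm{\gamma}} - \bm{\gamma}^\ast\| \le \tfrac{2(\sqrt2+1)\sqrt d}{\delta}\,dia(S)^2$. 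A mean-value estimate then bounds the first bracket by $\max_{\bm{\gamma}\in S_0}\|\partial(C_1-C_2)/\partial\bm{\gamma}\|\cdot\|\widehat{\bm{\gamma}}-\bm{\gamma}^\ast\|$, producing the remaining (gradient) term of $A$; the leftover second-order contribution of this move is $O(dia(S)^4)$ and is discarded.

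The main obstacle I anticipate lives entirely in the constraint-relaxation step, and is twofold. First, one must verify that the corrected point $\widehat{\bm{\gamma}}$ stays inside $S$, not merely inside $IR_\Gamma$: I would argue this from the fact that the correction is of order $dia(S)^2$, asymptotically negligible relative to the simplex scale $dia(S)$, so feasibility in $IR_\Gamma \cap S$ persists for the small simplices that dominate the nested sequence, enforced if necessary by a harmless projection along the $\psi^o$-axis. Second, the delicate bookkeeping is tracking the exact constants $2(\sqrt2+1)$ and $\sqrt d$, which come from the secant-tangent gap of $D_{i1}=\tfrac12(\psi_i^o+\psi_i)^2$ (whose Hessian has spectral norm $2$) and of $D_{i2}=\tfrac12[(\psi_i^o)^2+\psi_i^2]$, and from converting the $d$ coordinate-wise corrections into a Euclidean norm. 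Finiteness of $A$ is then immediate once $dia(S_0)<+\infty$ (Remark after Assumption~\ref{positive definite assumption}), since the gradients and Hessians of the polynomials $C_1,C_2$ are continuous and hence bounded on the compact simplex $S_0$.
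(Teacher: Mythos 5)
Your route coincides with the paper's in every essential ingredient: the gap is split into a tangent-linearization error for $C_1$, a secant-interpolation error for $C_2$ (bounded via the Frobenius norms of the Hessians over $S_0$, exactly as in the paper's items (1) and (2)), and a constraint-relaxation error handled by restoring $\psi_i^o\psi_i=1$ through the substitution $\psi_i^o\mapsto 1/\psi_i$, with the per-coordinate correction controlled by the secant-minus-tangent gaps of $D_{i1},D_{i2}$ (Hessian norms $2$ and $\sqrt2$) and the lower bound $\psi_i\ge\delta$ from Assumption~\ref{positive definite assumption}. The factor $(\sqrt2+1)\sqrt d/\delta$ arises the same way in both arguments.

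The one genuine gap is in how the corrected point is used. To write $\underline{f_S(Y_x=y)}\le f(y,X=x)+C_1(\widehat{\bm{\gamma}})-C_2(\widehat{\bm{\gamma}})$ you need $\widehat{\bm{\gamma}}\in IR_{\Gamma}\cap S$, but the substitution $\psi_i^o\mapsto 1/\psi_i$ generically pushes the point outside the (heavily bisected, hence thin) simplex $S$, and neither of your proposed repairs closes this. The ``asymptotically negligible'' argument does not cover an arbitrary fixed $S$, while the lemma is stated for all $S$; and projecting back onto $S$ along the $\psi^o$-axis destroys the exact identity $\psi_i^o\psi_i=1$, so the projected point leaves $IR_{\Gamma}$ and the comparison with $\underline{f_S(Y_x=y)}$ fails again. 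The paper sidesteps exactly this circularity by introducing the auxiliary program \eqref{auxiliary-formulation}, whose feasible set keeps the exact constraint $D_{i1}-D_{i2}=1$ but enlarges $S$ to an $O(dia(S)^2)$ neighborhood intersected with $S_0$; both the true optimum $\underline{\bm{\gamma}}$ and the corrected point built from $\underline{\underline{\bm{\gamma}}}$ are feasible for that program, and the two resulting one-sided comparisons \eqref{claim_1} and \eqref{claim_2} are what produce the factor $2$ in the gradient term of $A$. Your proof needs either this enlargement device or an actual argument that a point of $IR_{\Gamma}\cap S$ exists within $O(dia(S)^2)$ of $\bm{\gamma}^{\ast}$; as written, that existence is asserted rather than proved.
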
 
\begin{proof}
Since $dia(S_{0}) < +\infty$, we have that each element of $\gamma \in S_{0}$ can be bounded, namely $\| \gamma \|_{+\infty}<+\infty $. Then $\|\frac{\partial C_1(\bm{\gamma)}}{\partial \bm{\gamma}}\|$,  $\|\frac{\partial C_2(\bm{\gamma)}}{\partial \bm{\gamma}}\|$, $\|\frac{\partial^2 C_1(\bm{\gamma)}}{\partial \bm{\gamma}^2}\|_F$, $\|\frac{\partial^2 C_2(\bm{\gamma)}}{\partial \bm{\gamma}^2}\|_F$ are all finite. Here $\|\cdot \|$ denotes the Euclidean norm, and $\|\cdot \|_F$ denotes the Frobenius norm. 

If the corresponding optimal solution of 
$\underline{f_S(Y_x = y)}$ and $\underline{\underline{f_S(Y_x = y)}}$ are denoted as $\underline{\bm{\gamma}}$ and $\underline{\underline{{\bm{\gamma}}}}$ ($\underline{\bm{\gamma}}, \underline{\underline{{\bm{\gamma}}}} \in S$). Then according to lemma.~\ref{tan_sec_bounded}, $\mid \underline{f_{S_{}}(Y_x = y)}  - \underline{\underline{f_{S_{}}(Y_x = y)} }\mid$ can be bounded as follows:
\begin{equation}
\begin{aligned}
       0 \leq &{\underline{f_{S_{}}(Y_x = y)}}  - \underline{\underline{f_{S_{}}(Y_x = y)} } \\ = &\mid C_1({\underline{{\bm{\gamma}}}}) - C^{\text{tan}}_1(\underline{\underline{{\bm{\gamma}}}})  - C_2({\underline{{\bm{\gamma}}}}) + C^{\text{sec}}_2(\underline{\underline{{\bm{\gamma}}}}) \mid\\
       \leq &\mid C_1(\underline{\underline{{\bm{\gamma}}}}) -  C^{\text{tan}}_1(\underline{\underline{{\bm{\gamma}}}})  -
       C_2(\underline{\underline{{\bm{\gamma}}}}) + C^{\text{sec}}_2(\underline{\underline{{\bm{\gamma}}}}) \mid + 
       \mid C^{\text{}}_1({\underline{{\bm{\gamma}}}}) -  C^{\text{}}_1(\underline{\underline{{\bm{\gamma}}}}) - C^{\text{}}_2({\underline{{\bm{\gamma}}}}) +  C^{\text{}}_2(\underline{\underline{{\bm{\gamma}}}}) \mid \\
       \overset{*}{\leq} & {\underbrace{\mid C_1(\underline{\underline{{\bm{\gamma}}}}) -  C^{\text{tan}}_1(\underline{\underline{{\bm{\gamma}}}}) \mid }_{(1)}} + \underbrace{\mid  C_2(\underline{\underline{{\bm{\gamma}}}}) - C^{\text{sec}}_2(\underline{\underline{{\bm{\gamma}}}}) \mid}_{(2)} +
       \underbrace{\mid (C^{\text{}}_1({\underline{{\bm{\gamma}}}}) -  C^{\text{}}_2({\underline{{\bm{\gamma}}}})) - ( C^{\text{}}_1({\underline{\underline{{\bm{\gamma}}}}}) -  C^{\text{}}_2(\underline{\underline{{\bm{\gamma}}}}))  \mid}_{(3)} \\
       \end{aligned}\label{linear_bound_main}
\end{equation}

\noindent \noindent \textbf{item~(1)}:We consider the last line. The tangent line equation satisfies the following bound by Taylor expansion:
\begin{equation}
    \mid C_1(\bm{\gamma}) - C^{\text{tan}}_1(\bm{\gamma}) \mid = O(\max\limits_{\bm{\gamma} \in S_{0}} \|\frac{\partial^2 C_1(\bm{\gamma)}}{\partial \bm{\gamma}^2} \|_F (dia(S_{}))^2) = O( dia(S_{})^2), \label{bound_1}
\end{equation}

\noindent \textbf{item~(2)}: On the other hand, note that $\bm{\gamma} = \sum\limits_{j=0}^{4d} \lambda_j S_{}^j$, here $\sum\limits_{j=0}^{4d} \lambda_j = 1, \lambda_j \geq 0$:
\begin{equation}
    \begin{aligned}
    &\mid C_2(\bm{\gamma}) - C^{\text{sec}}_2(\bm{\gamma}) \mid \\ =& -[C_2(S_{}^0), C_2(S_{}^1),... C_{2}(S_{}^{4d})] \left[\begin{matrix} S_{}^0, ... , S_{}^{4d} \\ 1,...,1 \end{matrix}\right]^{-1}\left[\begin{matrix}{\sum\limits_{i=0}^{4d} \lambda_i S_{}^{i}} \\ 1\end{matrix}\right] + C_2(\sum_{j=0}^{4d} \lambda_j S_{}^{j}) \\
    = &  \sum_{j=0}^{4d} \lambda_j C_2(S_{}^j) -C_2(\sum_{j=0}^{4d} \lambda_j S_{}^{j}).
    \label{bound_2}
    \end{aligned}
\end{equation}

We now aim to bound Eqn.~\eqref{bound_2}, inspired by \cite{budimir2001further}. For simplicity, we use $\triangledown$ to denote the derivative of a vector. Notice that the convex function has the property:
\begin{equation}
    \begin{aligned}
    C_2(\sum\limits_{i=0}^{4d}\lambda_j S^j ) - C_2(S^j) \geq \langle  \nabla C_2(S^j), \sum\limits_{j=0}^{4d} \lambda_j S^j - S^j\rangle 
    \end{aligned}
\end{equation}
By summation, we have
\begin{equation}
    \begin{aligned}
    \eqref{bound_2} &= \sum\limits_{j=0}^{4d} \lambda_j C_2(S^j) - C_2(\sum\limits_{i=0}^{4d}\lambda_j S^j ) \\ &\leq \sum_{j=0}^{4d} \lambda_j \langle  \nabla C_2(S^j), -\sum\limits_{j=0}^{4d} \lambda_j S^j + S^j\rangle \\
    &= \sum_{j=0}^{4d} \lambda_j \langle \nabla C_2(S^j), S^j \rangle - \langle \sum_{j=0}^{4d} \lambda_j S^j, \sum_{j=0}^{4d} \lambda_j \nabla C_2(S^j) \rangle
    \end{aligned}\label{inverse_qinsen}
\end{equation}

\eqref{inverse_qinsen} equals to
\begin{equation}
    \begin{aligned}
     &\frac{1}{2} \sum_{i=0}^{4d} \sum_{j=0}^{4d} \lambda_i \lambda_j \left[\left[ \langle \nabla C_2(S^j), S^j \rangle + \langle \nabla C_2(S^i), S^i  \rangle \right]- \left[\langle \nabla C_2(S^j), S^i \rangle + \langle \nabla C_2(S^i), S^j \rangle\right] \right] \\
     = & \frac{1}{2} \sum_{i=0}^{4d} \sum_{j=0}^{4d} \lambda_i \lambda_j \langle S^i - S^j, \nabla C_2(S^i) - \nabla C_2(S^j) \rangle \\
     \leq & \frac{1}{2} \sum_{i=0}^{4d} \sum_{j=0}^{4d} \lambda_i \lambda_j \| S^i - S^j\| \|\nabla C_2(S^i) - \nabla C_2(S^j) \| \\
     \leq & \frac{1}{2} \sum_{i=0}^{4d} \sum_{j=0}^{4d} \lambda_i \lambda_j (\max\limits_{\bm{\gamma} \in S} \|\frac{\partial^2 C_2(\bm{\gamma})}{\partial \bm{\gamma}^2}\|_F) \| S^i - S^j\|^2 \\
     \leq & \frac{1}{2} \sum_{i=0}^{4d} \sum_{j=0}^{4d} \lambda_i \lambda_j (\max\limits_{\bm{\gamma} \in S}\|\frac{\partial^2 C_2(\bm{\gamma})}{\partial \bm{\gamma}^2}\|_F) dia(S)^2 \\
     \leq & \frac{1}{2}(\max\limits_{\bm{\gamma} \in S_0}\|\frac{\partial^2 C_2(\bm{\gamma})}{\partial \bm{\gamma}^2}\|_F) dia(S)^2 .
    \end{aligned}\label{bound_chauchy}
\end{equation}

We have
\begin{equation}
    \begin{aligned}
         0 \leq \sum_{j=0}^{4d} \lambda_j C_2(S_{}^j) -C_2(\sum_{j=0}^{4d} \lambda_j S_{}^{j}) \leq \frac{1}{2} \max\limits_{\bm{\gamma} \in S_{}} \|\frac{\partial^2 C_2(\bm{\gamma)}}{\partial \bm{\gamma}^2} \|_F (dia(S_{}))^2 = O((dia(S_{}))^2).
    \end{aligned}
\end{equation}
Thus 
\begin{equation}
    \begin{aligned}
    \text{Eqn.~\eqref{bound_2}} &= \sum_{j=0}^{4d} \lambda_j C_2(S_{}^j) - C_2(\sum_{j=0}^{4d} \lambda_j S_{}^{j}) 
    = O ((dia(S_{}))^2). \label{bound_2_final}
    \end{aligned}
\end{equation}

\noindent \textbf{item~(3)} We introduce an auxiliary optimization problem as follows:
\begin{equation}
    \begin{aligned}
    &\text{min~}f(y ,X=x) + C_1^{\text{}}(\bm{\gamma}) - C_2^{\text{}}(\bm{\gamma}) \\
    &\text{subject to}: \bm{\phi} \in IR_{{\Phi}}, D_{i1}^{\text{}}(\bm{\gamma}) - D_{i2}^{\text{}}(\bm{\gamma}) = 1 , \\ &~~~~~~~~~~~~~~~~~~~\bm{\gamma} \in \{\bm{\gamma}^{'}: \text{} \exists \bm{\gamma}^{''}\in S, \|\bm{\gamma}^{'} - \bm{\gamma}^{''}\|\leq \frac{(\sqrt{2}+1)\sqrt{d}}{\delta} dia(S)^2 \} \cap S_0.
    \end{aligned}   \label{auxiliary-formulation}
\end{equation}

Compared with the optimization problem of $\underline{ f_S(Y_x = y)}$ (by \eqref{auxiliary-formulation} with an additional constraint $\gamma \in S$), \eqref{auxiliary-formulation} provides a relaxed constraint on $\bm{\gamma}$. We denote the optimal solution of \eqref{auxiliary-formulation} as $\uwave{\bm{\gamma}}$, and the optimal value as $\uwave{f_S(Y_x = y)}$.

On the one hand, \eqref{auxiliary-formulation} slightly relaxes the constraint $\bm{\gamma} \in S_0$. Namely for each $\uwave{\bm{\gamma}}$, there exists a corresponding $\bm{\gamma}^{''} \in S$ with a distance less than $\frac{Q}{\delta}dia(S)^2$. Hence
\begin{equation}
    \begin{aligned}
    &[C_1(\underline{\bm{\gamma}}) -  C_2(\underline{\bm{\gamma}})] - [C_1(\uwave{\bm{\gamma}}) -  C_2(\uwave{\bm{\gamma}})] \\
    \leq& [C_1({\bm{\gamma}}^{''}) -  C_2({\bm{\gamma}}^{''})] - [C_1(\uwave{\bm{\gamma}}) -  C_2(\uwave{\bm{\gamma}})] \\
    \leq& \max\limits_{\bm{\gamma} \in S_0} \|\frac{\partial (C_1(\bm{\gamma}) - C_2(\bm{\gamma}))}{\partial \bm{\gamma}}\| \left(\frac{(\sqrt{2}+1)\sqrt{d}}{\delta}dia(S)^2\right).
    \end{aligned}\label{claim_1}
\end{equation}

On the other hand, we consider the optimal solution $\underline{\underline{\bm{\gamma}}}$ of $\underline{\underline{f_S(Y_x = y)}}$. We identify the elements $\underline{\underline{\bm{\gamma}}}^{T} = ((\underline{\underline{\bm{\psi^o}}})^T, \underline{\underline{\bm{\theta}}}^T, \underline{\underline{\bm{\psi}}}^T, \underline{\underline{\bm{\omega}}^T})$. Then we introduce an auxiliary solution as follows:
\begin{equation}
    \begin{aligned}
    &{\uwave{\psi^o_i}} = \frac{1}{\underline{\underline{\psi_i}}}, {\uwave{\bm{\psi^o}}} = ({\uwave{\psi^o_1}},... {\uwave{\psi^o_{d}}}) \\
    &\uwave{\bm{\gamma}}^{'} =  (({\uwave{\bm{\psi^o}}})^T, \underline{\underline{\bm{\theta}}}^T, \underline{\underline{\bm{\psi}}}^T, \underline{\underline{\bm{\omega}}}^T).
    \end{aligned}\label{identification_new_gamma}
\end{equation}
We will show that $\uwave{\bm{\gamma}}^{'}$ is within the feasible region of \eqref{auxiliary-formulation}. By identification in \eqref{identification_new_gamma}, the first row of constraints in \eqref{auxiliary-formulation} can be directly satisfied. Moreover, by Ass.~\ref{positive definite assumption}, we have 

\begin{equation}
    \begin{aligned}
    \|\uwave{\bm{\gamma}}^{'} - \underline{\underline{\bm{\gamma}}}\| =&  \left(\sum\limits_{i=1}^{d} (\frac{1}{\underline{\underline{\psi_i}}} - \underline{\underline{\psi_i^o}} )^2\right)^{\frac{1}{2}} \\
    \leq& \frac{1}{\delta} (\sum\limits_{i=1}^{d}( \underline{\underline{\psi_i}}\underline{\underline{\psi_i^o}} - 1)^2)^{\frac{1}{2}}\\
    =& \frac{1}{\delta} (\sum\limits_{i=1}^{d}( D_{i1}(\bm{\underline{\underline{\gamma}}}) - D_{i2}(\bm{\underline{\underline{\gamma}}}) - 1)^2)^{\frac{1}{2}}\\
    \leq& \frac{\sqrt{d}}{\delta} \max\limits_{i=1,...d} \left(1- \left(D_{i1}(\bm{\underline{\underline{\gamma}}}) - D_{i2}(\bm{\underline{\underline{\gamma}}})\right) \right) \\
    \leq& \frac{\sqrt{d}}{\delta} \max\limits_{i=1,...d}\left[ \left( D_{i1}^{sec}(\bm{\underline{\underline{\gamma}}}) - D_{i2}^{tan}(\bm{\underline{\underline{\gamma}}})\right) - \left( D_{i1}^{}(\bm{\underline{\underline{\gamma}}}) - D_{i2}^{}(\bm{\underline{\underline{\gamma}}})\right)\right].
    \end{aligned}\label{gamma_difference_1}
\end{equation}

Symmetrically, we have 
\begin{equation}
    \begin{aligned}
    \|\uwave{\bm{\gamma}}^{'} - \underline{\underline{\bm{\gamma}}}\| \leq \frac{\sqrt{d}}{\delta} \max\limits_{i=1,...d}\left[ \left( D_{i1}^{}(\bm{\underline{\underline{\gamma}}}) - D_{i2}^{}(\bm{\underline{\underline{\gamma}}})\right) - \left( D_{i1}^{tan}(\bm{\underline{\underline{\gamma}}}) - D_{i2}^{sec}(\bm{\underline{\underline{\gamma}}})\right) \right].
    \end{aligned}\label{gamma_difference_2}
\end{equation}

By the same strategy in \textbf{item(1)-(2)}, and noticing the fact that 
\begin{equation}
    \begin{aligned}
    \max\limits_{\bm{\gamma} \in S_{}} \|\frac{\partial^2 D_{i1}(\bm{\gamma)}}{\partial \bm{\gamma}^2} \|_F = 2, \max\limits_{\bm{\gamma} \in S_{}} \|\frac{\partial^2 D_{i2}(\bm{\gamma)}}{\partial \bm{\gamma}^2} \|_F = \sqrt{2},
    \end{aligned}
\end{equation}
\eqref{gamma_difference_1} and \eqref{gamma_difference_2} can be combined as
\begin{equation}
    \begin{aligned}
    &\|\uwave{\bm{\gamma}}^{'} - \underline{\underline{\bm{\gamma}}}\|  \leq   \frac{\sqrt{d}}{\delta} \min \{\frac{1}{2}*2+\sqrt{2}, 2 + \frac{1}{2}\sqrt{2}\} dia(S)^2 = \frac{\sqrt{d}}{\delta} (\sqrt{2}+1)dia(S)^2.
    \end{aligned}
\end{equation}

Hence we claim this $\uwave{\bm{\gamma}}^{'}$ is within the feasible region of \eqref{auxiliary-formulation}. Then
\begin{equation}
    \begin{aligned}
     \left[C_1(\uwave{\bm{\gamma}}) - C_2(\uwave{\bm{\gamma}})\right] - \left[C_1(\underline{\underline{\bm{\gamma}}}^{}) - C_2(\underline{\underline{\bm{\gamma}}}^{})\right]  &\leq \left[ C_1(\uwave{\bm{\gamma}}^{'}) - C_2(\uwave{\bm{\gamma}}^{'}) \right] - \left[C_1(\underline{\underline{\bm{\gamma}}}^{}) - C_2(\underline{\underline{\bm{\gamma}}}^{})\right] \\
     &\leq \max\limits_{\bm{\gamma} \in S_{}} \|\frac{\partial{(C_1(\bm{\gamma}) - C_2(\bm{\gamma}))}}{\partial{\bm{\gamma}}}\|  \frac{\sqrt{d}}{\delta} (\sqrt{2}+1)dia(S)^2.
     \end{aligned}\label{claim_2}
\end{equation}

Combining \eqref{claim_1} and \eqref{claim_2}, we have
\begin{equation}
    \begin{aligned}
    \mid (C^{\text{}}_1({\underline{{\bm{\gamma}}}}) -  C^{\text{}}_2({\underline{{\bm{\gamma}}}})) - ( C^{\text{}}_1({\underline{\underline{{\bm{\gamma}}}}}) -  C^{\text{}}_2(\underline{\underline{{\bm{\gamma}}}}))  \mid \leq \max\limits_{\bm{\gamma} \in S_{0}} \|\frac{\partial{(C_1(\bm{\gamma}) - C_2(\bm{\gamma}))}}{\partial{\bm{\gamma}}}\|  \frac{2\sqrt{d}}{\delta} (\sqrt{2}+1)dia(S)^2.
    \end{aligned}
\end{equation}

\noindent \textbf{Combination of} \textbf{item(1)-(3)} Combining with Eqn.~\eqref{bound_1} and Eqn.~\eqref{bound_2_final}, and recalling the bound in (\ref{linear_bound_main}), we have:
\begin{equation}
    \underline{\underline{f_{S_{}}^{}(Y_x = y)}} \leq \underline{f_{S_{}}(Y_x = y)} \leq   \underline{\underline{f_{S_{}}^{}(Y_x = y)}} +A * dia(S_{})^2.
\end{equation}
In brief, we have $\mid  \underline{\underline{f_{S_{}}^{}(Y_x = y)}} -  {\underline{f_{S_{}}^{}(Y_x = y)}}\mid = O(dia(S)^2)$. Thus we have proved our lemma.
\end{proof}

\begin{remark}
We can do enhancement in $\textbf{Bounding()}$ as follows. It is through taking advantage of the information from the parent simplex $\text{pa}(S)$ \footnote{$S_1 = \text{pa}(S_2)$ denotes $S_2$ is bisectioned from $S_1$.} and encapsulating the above bounding strategy into a recursive form during partitioning.
\begin{equation}
    \begin{aligned}
        \underline{\underline{f_{S_{}}(Y_{x} = y)}} =  &\max \{ \textbf{Bounding}(\text{pa}(S_{})), \underline{\underline{f_{S_{}}^{}(Y_{x} =  y)}\}}
    \end{aligned}
\end{equation}
\end{remark}

\noindent{\textbf{The proof of (4)}}
For the final preparation, we introduce the bisection theorem:
\begin{theorem}{(\cite{kearfott1978proof}, Theorem~3.1)}
When $\tilde{S}_{i_{k}}$ is bisectioned from $S_{0}$ by $k$ times, we have
\begin{equation}
    \begin{aligned}
    dia(\tilde{S}_{i_{k}} ) \leq  (\frac{\sqrt{3}}{2})^{\lfloor\frac{k}{4d}\rfloor } dia(S_{0}). 
    \end{aligned}
\end{equation}
\end{theorem}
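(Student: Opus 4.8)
The plan is to prove the claimed diameter contraction purely geometrically, reducing the whole statement to the single fact that $4d$ consecutive longest-edge (LE) bisections shrink the diameter by a factor of at least $\sqrt3/2$, and then iterating this over blocks. Throughout I would use that for a simplex $S$ the quantity $dia(S)$ equals the length of its longest edge (the maximizing pair in $\max_{s_1,s_2}\|s_1-s_2\|$ is always a pair of vertices), and that \textbf{Bisection()} in Algorithm.~3 always subdivides precisely this longest edge. Recall the simplices here live in $\mathbb{R}^{4d}$ and have $4d+1$ vertices, which is why the block length is exactly $4d$.

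First I would establish the \emph{median bound}, the only metric input. If the longest edge $S^a S^b$ of length $\ell$ is split at its midpoint $v$, then every newly created edge $v S^i$ ($i\ne a,b$) satisfies $\|v-S^i\|\le\frac{\sqrt3}{2}\ell$: this follows from the median-length identity $\|v-S^i\|^2=\frac14\left(2\|S^a-S^i\|^2+2\|S^b-S^i\|^2-\|S^a-S^b\|^2\right)$ together with $\|S^a-S^i\|,\|S^b-S^i\|\le\ell$ (as $S^aS^b$ is longest), which gives $\|v-S^i\|^2\le\frac34\ell^2$; the two half-edges have length $\ell/2\le\frac{\sqrt3}{2}\ell$ as well. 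Two corollaries follow immediately. Monotonicity: $dia(\text{child})\le dia(\text{parent})$, since every child edge is an inherited parent edge, a half-edge, or a median, all bounded by the parent's longest edge. And the \emph{created-edge bound}: every edge that is ever created along the nested chain (an edge incident to some midpoint) has length $\le\frac{\sqrt3}{2}dia(S_0)$, because the edge being bisected always has length $\le dia(S_0)$ by monotonicity.

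The core is a vertex-counting step over a block of $4d$ consecutive bisections $\tilde S_{i_0}\to\cdots\to\tilde S_{i_{4d}}$. If some intermediate $\tilde S_{i_j}$ with $j<4d$ already satisfies $dia(\tilde S_{i_j})\le\frac{\sqrt3}{2}dia(S_0)$, monotonicity finishes the block. Otherwise every $\tilde S_{i_j}$ with $j<4d$ has diameter $>\frac{\sqrt3}{2}dia(S_0)$, so at each step the bisected (longest) edge exceeds $\frac{\sqrt3}{2}dia(S_0)$ and hence, by the created-edge bound, must be an \emph{original} edge of $S_0$, i.e.\ one whose two endpoints are both original vertices. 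Each LE bisection replaces exactly one vertex of the current simplex by a midpoint, so bisecting an original edge strictly decreases the number of surviving original vertices by one. Since $S_0$ has only $4d+1$ original vertices, after $4d$ such bisections at most one survives in $\tilde S_{i_{4d}}$; with fewer than two original vertices every edge of $\tilde S_{i_{4d}}$ touches a midpoint and is therefore a created edge, forcing $dia(\tilde S_{i_{4d}})\le\frac{\sqrt3}{2}dia(S_0)$. Either way the diameter contracts by at least $\sqrt3/2$ across any $4d$ consecutive bisections.

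Finally I would conclude by grouping the $k$ bisections of the chain into $\lfloor k/(4d)\rfloor$ disjoint blocks of length $4d$, applying the block contraction once per block, and invoking monotonicity to discard the trailing $k-4d\lfloor k/(4d)\rfloor<4d$ bisections (which can only decrease the diameter further); this yields $dia(\tilde S_{i_k})\le(\sqrt3/2)^{\lfloor k/(4d)\rfloor}dia(S_0)$. I expect the vertex-counting step to be the main obstacle: one must verify carefully that, under the hypothesis that the diameter stays above the threshold, each bisection in the block genuinely eliminates a \emph{distinct} original vertex rather than re-splitting an edge between a surviving original vertex and a midpoint — which is exactly what the created-edge length bound rules out — and that the pigeonhole on the $4d+1$ original vertices is tight enough to force the collapse precisely at step $4d$.
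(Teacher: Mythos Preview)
The paper does not actually supply a proof of this bisection theorem: it is stated with a direct citation to Kearfott (1978, Theorem~3.1) and then used as a black box in the proof of step~\textbf{(4)} of Theorem~1. Your proposal therefore goes beyond what the paper itself does. The argument you outline --- the Apollonius/median bound $\|v-S^i\|^2\le\tfrac34\ell^2$, monotonicity of diameters under LE bisection, the resulting fact that every newly created edge has length at most $\tfrac{\sqrt3}{2}$ times the starting diameter, and the pigeonhole count on the $4d+1$ original vertices over a block of $4d$ consecutive bisections --- is precisely the classical proof of Kearfott's theorem and is correct as stated. The point you single out as delicate (that each bisection within a block kills a \emph{distinct} original vertex) is indeed handled by your created-edge bound: if the diameter stays above $\tfrac{\sqrt3}{2}dia(T)$, the longest edge cannot touch any midpoint, so both its endpoints are original, and whichever child the nested chain follows loses exactly one of them.
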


On this basis, notice that lemma.~\ref{second_bounded} holds on each iteration, and $dia(S_{0})<+\infty$, then we have 
\begin{equation}
    \begin{aligned}
    \mid {\underline{f_{\tilde{S}_{i_{L_n} }}(Y_x = y)}}  -    \underline{\underline{f_{\tilde{S}_{i_{L_n} }}(Y_x = y)}} \mid  \leq A ((\frac{{3}}{4})^{\lfloor\frac{L_n}{4d}\rfloor} )  = O((\frac{{3}}{4})^{\frac{L_n}{4d}}),
    \end{aligned}\label{second_bounded_result}
\end{equation}
where $A$ is identified in our main text.
~\quad \\

Until here we have proved procedure \textbf{(1)-(4)}, thus the main part of Theorem.~\ref{convergence_theorem} has been proved.
~\quad \\

Additionally, consider the infinite case. Due to $L_n \geq log(n)$ (the worst case is that simplices set is bisectioned like a complete binary tree), we have $L\rightarrow +\infty$ when $n\rightarrow +\infty$, thus 
\begin{equation}
    \begin{aligned}
    \lim_{n\rightarrow +\infty} \mid \underline{f(Y_x = y)}  - \underline{f_{opt}^n (f(Y_x=y))} \mid = 0.
    \end{aligned}
\end{equation} Done.

\subsection{Extension to the ACE case}\label{app_corollary}
\label{corollary_ACE_appendix}

We first illustrate the construction of (\ref{eqn_basic_bound_ace}):
\begin{equation}
\begin{aligned}
     &\int_{X^L}^{X^U} \int_{Y^L}^{Y^U} f(Y_x=y)\pi(x) dx dy \\
     =&\int_{X^L}^{X^U} \int_{Y^L}^{Y^U} \sum_{i=1}^{d} \left(\frac{f(y,u_i,X=x) f(u_i,X\neq x)}{f(u_i,X=x)}+f(y,X=x)\right)\pi(x) dx dy \\
     =& \sum_x \pi(x) \int_{Y^L}^{Y^U} yf(y,X=x) dy +  \sum_x \pi(x) \sum_{i=1}^{d} \frac{ \left(\int_{Y^L}^{Y^U}yf(y,u_i,X=x)dy\right) f(u_i, X\neq x)}{f(u_i,X=x)}.
\end{aligned}
\end{equation}

Let $X=\{x_{1},x_{2},...x_{dim(\bm{X})}\}$. In this section, we extend PI-SFP method from bounding $f(Y_x = y)$ to bounding $ACE.$ For simplicity, we extend the denotations in our main text as follows:
\begin{equation}
    \begin{aligned}
    &\theta_{i \mid x} = {\int_{Y^{L}}^{Y^U}yf(y,U=u_i,X=x)dy}, & \bm{\theta_x} = (\theta_{1\mid x}, \theta_{2\mid x}, ...\theta_{d \mid x})^T\\ 
    &\psi_{i \mid x} = {f(U=u_i,X=x)}, & \bm{\psi_x} = (\psi_{1\mid x}, \psi_{2\mid x}, ...\psi_{d \mid x})^T\\ 
    &\omega_{i \mid x} = {f(U=u_i,X\neq x)}, & \bm{\omega_x} = (\omega_{1\mid x}, \omega_{2\mid x}, ...\omega_{d \mid x})^T\\
    &\psi_{i \mid x} \psi_{i \mid x}^o = 1, & \bm{\psi_x^o} = (\psi^o_{1\mid x}, \psi^o_{2\mid x}, ...\psi^o_{d \mid x})^T\\
    &\bm{\phi_x} = (\bm{\theta_{x}}, \bm{\psi_{x}}, \bm{\omega_{x}}),
    &\bm{\gamma}_x= \left(\begin{matrix}
    (\bm{\psi_{x}^o})^T,  \bm{\theta_{x}}^T,  \bm{\psi_x}^T,  \bm{\omega_x}^T
    \end{matrix}\right)^T
    \end{aligned}
\end{equation}
On this basis, the independent variables are transformed to $\bm{\gamma} = (\bm{\gamma}_{x_1}, \bm{\gamma}_{x_2},...\bm{\gamma}_{x_{dim(\bm{X})}} )$. Following the same strategy as in Section.~\ref{framework} and Section.~\ref{section_algorithm}, we can relax the programming (\ref{eqn_basic_bound_ace}) in our main text as follows. It is a natural extension of (\ref{re-formulation}) in Section.~\ref{framework}, by which we seek the valid bound of ACE: 

\begin{equation}
    \begin{aligned}
    \underline{ACE_{\bm{X}\rightarrow \bm{Y}}} {=} &\min \sum_{x}\int_{Y^{L}}^{Y^U} \pi(x) yf(y ,X=x) dy +  \sum_{x}\sum_{i=1}^{d} \psi_{i\mid x}^{o} \theta_{i\mid x} \omega_{i\mid x}\pi(x), \\
    &\text{~subject to~} 
    \forall x \in X, \psi_{i\mid x}^{o} \psi_{i\mid x} = 1 , 
    \bm{\phi_x} \in IR_{\bm{\Phi_x}} =  IR^{1}_{\bm{\Phi_x}} \cap  IR^2_{\bm{\Phi_x}}, \\
    \end{aligned}   \label{PI-SFP_ACE}
\end{equation}

where the set $IR^{1}_{\bm{\Phi}}$ is constructed as

\begin{equation}
\begin{aligned}
   &IR^{1}_{\bm{\Phi_x}} = \{\bm{\phi_x}: \left[\begin{matrix}
   -\bm{I_{d*d}} \\ \bm{I_{d*d}}
   \end{matrix}\right] \left[\begin{matrix}
   &(\int_{Y^L}^{Y^U} yf(y,\bm{W},X=x) dy )^{T} \\ &f(\bm{W},X=x)^{T} \\ &f(\bm{W},X\neq x)^{T}
   \end{matrix} \right]^{T}  -  \left[ \begin{matrix}
   &-\overline{P(\bm{W}\mid \bm{U})} \\  &\underline{P(\bm{W} \mid \bm{U})}
   \end{matrix} \right] \bm{\phi_x} \geq \bm{0}\}.\\
   \end{aligned}
\end{equation}

$\bm{I_{d*d}}$ is the $d*d$ identity matrix. Moreover, the set $IR^{2}_{\bm{\Phi}}$ indicates the natural constraints by default:

\begin{equation}
    \begin{aligned}
    IR_{\bm{\Phi}_x}^{2} = \left\{ \bm{\phi}_x:
    \left[\begin{matrix}
    &\bm{1_{1*d}}^{} \bm{\theta_x} \\ &\bm{1_{1*d}}^{} \bm{\phi_x} \\ &\bm{1_{1*d}}^{} \bm{\omega_x} 
    \end{matrix}\right] = \left[ \begin{matrix}
    &\int^{Y^U}_{Y^L} yf(y,X=x)dy \\ &f(X=x)\\ &f(X\neq x) \}
    \end{matrix}\right], \forall i,
\left\{\begin{matrix}
    \theta_{i\mid x} \in [0,f(y,X=x)] \\
    \phi_{i\mid x} \in (0,f(X=x)] \\
    \omega_{i\mid x} \in [0,f(X\neq x)]
\end{matrix} \right\} \right\}    .
    \end{aligned}
\end{equation}
$\bm{1_{1*d}}$ is the $1*d$ all-ones vector. Then (\ref{re-formulation_linear_weaker}) in our main text is extended as
\begin{equation}
    \begin{aligned}
   &\min \sum_{x} \pi(x)\{[C_1^{\text{tan}}(\bm{\gamma}_{x}) - C_2^{\text{sec}}(\bm{\gamma}_{x})]\mathbbm{1}_{\pi(x)>0}+[C_1^{\text{sec}}(\bm{\gamma}_{x}) - C_2^{\text{tan}}(\bm{\gamma}_{x})]\mathbbm{1}_{\pi(x)<0}\},  \\
   &\text{~subject to~}D_{i}^{l}(\bm{\gamma}_{x}) \leq 1,  D_{i}^{u}(\bm{\gamma}_{x}) \geq 1, \forall i=1,2,...d, 
    \bm{\phi_x} \in IR_{\bm{\Phi_x}}.
    \end{aligned}
\end{equation}
Here the function $C_k^{\text{tan}}(\bm{\gamma}_{x}), C_k^{\text{sec}}(\bm{\gamma}_{x}), k=1,2, D_{i}^{l}(\bm{\gamma}_{x}), D_{i}^{u}(\bm{\gamma}_{x}), i=1,2,...d$ are all following (\ref{tan_sec}) in our main text. After this construction, we adopt the same simplicial partition strategy as in our main text.

\subsection{The proof of further discussions and extensions}\label{app_discussion}
\subsubsection{Discussion 1: the proof of lemma.~\ref{lemma_no_assumption}}\label{pro_lemma_no_assumption}

\begin{proof}
For simplification, the denotations $Y=y$, $X=x$ are simplified as $y$ and $x$, the denotation $X \neq x$ is simplified as $x^c$, and $dim(\bm{W})$ is simplified as $\mathscr{W}$. Samely, we use $\bm{E_{i*i}}$ to denote the $i*i$ identity matrix, $\bm{J_{i,j}}$ to denote the $i*j$ all-ones matrix, and $\bm{0_{i*j}}$ to denote the $i*j$ all-zero matrix.

\begin{itemize}
\item \textbf{Conclusion 1:} The tight lower bound of ${f(Y_x = y)}$ is $f(y, X=x)$.
\end{itemize}

We divide it into two parts. On the one hand, if $\mathscr{W} \geq d$, $P(\bm{W} \mid \bm{U})$ can be constructed as follows.
\begin{equation}
    \begin{aligned}   P(\bm{W}\mid \bm{U}) = \left[
\begin{BMAT}{c.c}{c}
    \begin{matrix} 
    \overbrace{\bm{P_{11}}}^{m*m} \\
    {\overbrace{\bm{P_{21}}}^{(\mathscr{W}-m)*m}}
    \end{matrix} 
    & 
    \begin{matrix} 
    \overbrace{\bm{P_{12}}}^{(d-m)*(d-m)}
    \\   
    \overbrace{\bm{P_{22}}}^{(\mathscr{W}-d+m)*(d-m)}
    \end{matrix} \\
\end{BMAT}
\right]
,
    \end{aligned} \label{trivial_construction}
\end{equation}  
where $\bm{P_{11}}, \bm{P_{12}}, \bm{P_{21}}, \bm{P_{22}}$ are matrices whose upper brackets indicate their rows and columns~($m\in [1,d-1]$). Specifically,
\begin{equation}
    \begin{aligned}
&\bm{P_{11}} = {\sum\limits_{i=1}^{m}f(W=w_i \mid y,x) }  \bm{E_{m*m}},~\bm{P_{12}} =  {\sum\limits_{i=1}^{d-m} f(W=w_i\mid x^c) } \bm{E_{(d-m)*(d-m)}}, \\
&\bm{P_{21}} = { \left[ \begin{matrix}  {f(W=w_{m+1}\mid y,x)}\\
    ... \\
    {f(W=w_{\mathscr{{W}}}\mid y,x)}\\  
    \end{matrix}\right]}  \bm{J_{1*m}},~\bm{P_{22}} = {\left[ \begin{matrix}  {f(W=w_{d-m+1}\mid x^c)}  \\
    ...  \\ 
    {f(W=w_{\mathscr{W}}\mid x^c)} \end{matrix}\right]}   \bm{J_{1*(d-m)}}.
\end{aligned} 
\end{equation} 

There is a solution for $f(y,\bm{U},x)$, $f(\bm{U},x^c)$ respectively as
\begin{equation}
\begin{aligned}
     \frac{1}{\sum\limits_{i=1}^{m}f(W=w_i\mid y,x)}\left[
     \begin{matrix}
     {f(y,W=w_1,x) }\\
     ...\\
     {f(y,W=w_m,x)}\\
       \bm{0_{(d-m)*1}}
     \end{matrix} \right],~~ \frac{1}{\sum\limits_{i=1}^{{d-m}}P(W=w_i\mid x^c)} \left[
     \begin{matrix}
          \bm{0_{m*1}}\\
     {P(W=w_1,x^c)}{}\\
     ...\\
     {P(W=w_{d-m},x^c)}\\
     \end{matrix}\right].
    \end{aligned}
\end{equation}

Due to $f(y,\bm{U},x) \circ f(\bm{U},x^c) = 0$ and the condition $f(\bm{U},x)>\bm{0}$, we have
\begin{equation}
    \begin{aligned}
     f(Y_x = y) =  f(y,x)+\sum_{i=1}^d\frac{f(y,u_i,x)}{P(u_i,x)}P(u_i, x^c) = f(y,x).
    \end{aligned}
\end{equation}

On the other hand, if $ \mathscr{W} < d $, we make adjustments on \eqref{trivial_construction} ($m_1+m_2 \leq \mathscr{W}$):
\begin{equation}
    \begin{aligned} \left[
\begin{BMAT}{c.c.c}{c}
    \begin{matrix} 
    \overbrace{\bm{P_{11}}}^{m_1*m_1} \\
    \overbrace{\bm{P_{21}}}^{(\mathscr{W}-m_1)*m_1}
    \end{matrix} 
    & 
    \begin{matrix} 
    \overbrace{\bm{P_{12}}}^{m_2 * m_2}
    \\   
    \overbrace{\bm{P_{22}}}^{(\mathscr{W}-m_2) * m_2}
    \end{matrix} 
    &
    \overbrace{\bm{P_3}}^{\mathscr{W} * (d-m_1-m_2)}
\end{BMAT}
\right] ,
    \end{aligned} \label{trivial_construction_extend}
\end{equation}
Specifically,
\begin{equation}
    \begin{aligned}
&\bm{P_{11}} = {\sum\limits_{i=1}^{m_1} P(W=w_i\mid y,x) }  \bm{E_{m_1 * m_1}},~\bm{P_{12}} = {\sum\limits_{i=1}^{m_2} P(W=w_i\mid x^c) }{}\bm{E_{m_2*m_2}}. \\
&\bm{P_{21}} = {\left[ \begin{matrix}  {f(W=w_{m_1+1}\mid y,x)}\\
    ... \\
    {f(W=w_{\mathscr{W}}\mid y,x)}\\  
    \end{matrix}\right]}^{} \bm{J_{1,m_1}},~\bm{P_{22}} = {\left[ \begin{matrix}  {f(W=w_{m_2+1} \mid x^c)} \\
    ...  \\ 
    {f(W=w_{\mathscr{W}}\mid x^c)}  \end{matrix}\right]} \bm{J_{1*m_2}}.\\
    &\bm{P_3} = \frac{1}{\mathscr{W}} {\bm{J_{\mathscr{W} * (d-m_1-m_2)}}}.
\end{aligned} 
\end{equation} 

Analogously, there is a solution for $f(y,\bm{U},x)$, $f(\bm{U},x^c)$ as follows respectively:

\begin{equation}
\begin{aligned}
     \frac{1}{\sum\limits_{i=1}^{m_1}f(W=w_i\mid y,x)}\left[
     \begin{matrix}
     {f(y,W=w_1,x)}\\
     ...\\
     {f(y,W=w_{m_1},x)}\\
       \bm{0_{(d-m_1)*1}}
     \end{matrix} \right],~~  \frac{1}{\sum\limits_{i=1}^{m_2}P(W=w_i\mid x^c)}\left[
     \begin{matrix}
         \bm{0_{m_1*1}} \\
     {P(W=w_1, x^c)}\\
     ...\\
     {P(W=w_{m_2}, x^c)}\\
    \bm{0_{(d-m_1-m_2)*1}}
     \end{matrix}\right].
    \end{aligned}
\end{equation}

In this case, we also have $f(Y_x = y) =  f(y,x)+\sum\limits_{i=1}^d\frac{f(y,u_i,x)}{f(u_i,x)}f(u_i,x^c) = f(y,x)$. In conclusion, if no assumptions are imposed, we have $\min {f(Y_x = y)} = f(y, X=x)$. Proved.\\

\begin{itemize}
\item \textbf{Conclusion 2:} If $P(\bm{W} \mid \bm{U})$ is restricted to be left-reversible and $ f(\bm{W} \mid X\neq x) \neq  f(\bm{W} \mid X=x,y)$, then the tight lower bound of ${f(Y_x = y)}$ is $f(y, X=x)$.
\end{itemize}

Without loss of generalization, we can assume that $\exists i_0 \in \{d,d+1,...\mathscr{W}\}$, such that $f(W=w_{i_0} \mid X\neq x) \neq f(W = w_{i_0} \mid x,y)$, or else we just need to relabel $\bm{W}$ in another order. 

On this basis, we still follow the Construction.~\ref{trivial_construction} in the first part. The tight lower bound has already been proved as $f(y,x)$, thus we only need demonstrate that with some choice of $m$, $P(\bm{W} \mid \bm{U})$ is left-reversible with the above assumption. In practice, we choose $m=d-1$. Then the $P(\bm{W} \mid \bm{U})$ is reformulated as
\begin{equation}
\begin{aligned}
    \left[
\begin{BMAT}{c.c}{c}
    \begin{matrix} 
    \overbrace{\bm{P_{11}}}^{(d-1)*(d-1)} \\
    {\overbrace{\bm{P_{21}}}^{(\mathscr{W}-d+1)*(d-1)}}
    \end{matrix} 
    & 
    \begin{matrix} 
    \overbrace{\bm{P_{12}}}^{1*1}
    \\   
    \overbrace{\bm{P_{22}}}^{(\mathscr{W}-1)*1}
    \end{matrix} \\
\end{BMAT}
\right]
:=
\left[
\begin{BMAT}{c.c}{c}
    \begin{matrix}
     \sum\limits_{i=1}^{d-1}P(W=w_i\mid y,x) \bm{E_{(d-1)*(d-1)}} \\
    \left[\begin{matrix}
     {f(W=w_d\mid y,x)} \\{f(W=w_{d+1}\mid y,x)} 
     \\...
     \\ {f(W=w_{\mathscr{W}}\mid y,x)}  
    \end{matrix} \right] \bm{J_{1*(d-1)}}
\end{matrix}
    & \left[
    \begin{matrix} 
   {f(W=w_1\mid x^c)} \\
   {f(W=w_2\mid x^c)} \\
   ...\\
   {f(W=w_{\mathscr{W}}\mid x^c)}
    \end{matrix} \right] \\
\end{BMAT}
\right].
\end{aligned}\label{left_reversible_wu_construct}
\end{equation}

We make equivalent denotations:
\begin{equation}
    \begin{aligned}
\left[
\begin{BMAT}{c}{c}
    \begin{matrix} 
    \overbrace{\bm{P^{'}_{12}}}^{(d-1)*1}
    \\   
    \overbrace{\bm{P^{'}_{22}}}^{(\mathscr{W}-d+1)*1}
    \end{matrix} \\
\end{BMAT}
\right] := 
 \left[
\begin{BMAT}{c}{c}
    \begin{matrix} 
    \overbrace{\bm{P_{12}}}^{1*1}
    \\   
    \overbrace{\bm{P_{22}}}^{(\mathscr{W}-1)*1}
    \end{matrix} \\
\end{BMAT}
\right]
    \end{aligned}
\end{equation}

In the following part, we claim that we only need to prove $\bm{P^{'}_{22} - P_{21} P_{11}^{-1} P^{'}_{12}} \neq \bm{0}$. We do the following algebraic distortion:
\begin{equation}
    \begin{aligned}
    \left[ \begin{matrix}
    \bm{E_{(d-1)*(d-1)}}& \bm{0_{(d-1)*(\mathscr{W}-d+1)}} \\ \bm{-P_{21} P_{11}^{-1}}& \bm{E_{(\mathscr{W}-d+1)*(\mathscr{W}-d+1)}}
    \end{matrix} 
     \right] *  \left[ \begin{matrix}
   \bm{P_{11}} & \bm{P^{'}_{12}} \\ \bm{P_{21}} & \bm{P^{'}_{22}}
    \end{matrix} 
     \right]  =  \left[ \begin{matrix}
   \bm{P_{11}} & \bm{P^{'}_{12}} \\ \bm{0_{(\mathscr{W}-d+1)*(d-1)}} & \bm{P^{'}_{22} - P_{21} P_{11}^{-1} P^{'}_{12}}
    \end{matrix} 
     \right].
    \end{aligned}
\end{equation}

According to the well-known Sylvester’s inequality~\cite{matsaglia1974equalities}: $\forall$$\bm{A_{m*n}},\bm{B_{n*p}}$, we have $\min \{rank(\bm{A}), rank(\bm{B})\} \geq rank(\bm{AB}) \geq rank(\bm{A})+rank(\bm{B})-n$. Then we have
\begin{equation}
    \begin{aligned}
        rank \left(\left[ \begin{matrix}
   \bm{P_{11}} & \bm{P^{'}_{12}} \\ \bm{P_{21}} & \bm{P^{'}_{22}}
    \end{matrix} 
     \right]\right) = rank\left(\left[ \begin{matrix}
   \bm{P_{11}} & \bm{P^{'}_{12}} \\ \bm{0_{(\mathscr{W}-d+1)*(d-1)}} & \bm{P^{'}_{22} - P_{21} P_{11}^{-1} P^{'}_{12}}
    \end{matrix} 
     \right]\right).
    \end{aligned}
\end{equation}

If $\bm{P^{'}_{22} - P_{21} P_{11}^{-1} P^{'}_{12}} = \bm{0_{(\mathscr{W}-d+1)*(d-1)}}$, then the right side of $rank()$ will be equal to $rank(\left[\bm{P_{11}},\bm{P_{12}^{'}}\right]) = d-1 <d$. On the other hand, if $\bm{P^{'}_{22} - P_{21} P_{11}^{-1} P^{'}_{12}} \neq \bm{0_{(\mathscr{W}-d+1)*(d-1)}}$, then it will turn to be $d$ (full column rank). In conclusion, to demonstrate the left-reversibility of $P(\bm{W}\mid\bm{U})$, $\bm{P^{'}_{22} - P_{21} P_{11}^{-1} P^{'}_{12}} \neq \bm{0}$ is all we need.

If we use $[\cdot]_{(i)}$ to denote the $i$-th element of vector $i=d,...\mathscr{W}$, then
\begin{equation}
    {[\bm{P^{'}_{22} - P_{21} P_{11}^{-1} P^{'}_{12}}]}_{(i)} = {\sum\limits_{i=1}^{d-1}f(W=w_i\mid x^c) }  \left[\frac{f(W=w_i,x^c)}{\sum\limits_{i=1}^{d-1}f(W=w_i,x^c)} - \frac{f(y,W=w_i,x)}{\sum\limits_{i=1}^{d-1}f(y,W=w_i,x)}\right]. \label{each_element}
\end{equation}
We make the contradiction. If we have ${\bm{P^{'}_{22} - P_{21} P_{11}^{-1} P^{'}_{12}}} = \bm{0_{(\mathscr{W}-d+1)*(d-1)}}$, then 
\begin{equation}
\begin{aligned}
\|{\bm{P^{'}_{22} - P_{21} P_{11}^{-1} P^{'}_{12}}}\|_1 &= {\sum\limits_{i=1}^{d-1}f(W=w_i\mid x^c) } \left[ \frac{\sum\limits_{i=d}^{\mathscr{W}}f(W=w_i,x^c)}{\sum\limits_{i=1}^{d-1}f(W=w_i,x^c)} - \frac{\sum\limits_{i=d}^{\mathscr{W}}f(y,W=w_i,x)}{\sum\limits_{i=1}^{d-1}f(y,W=w_i,x)}\right] \\
&= {\sum\limits_{i=1}^{d-1}f(W=w_i\mid x^c) } \left[ \frac{f(x^c)}{\sum\limits_{i=1}^{d-1}f(W=w_i,x^c)} - \frac{f(y,x)}{\sum\limits_{i=1}^{d-1}f(y,W=w_i,x)}\right]\\
&={\sum\limits_{i=1}^{d-1}f(W=w_i\mid x^c) }\left[\frac{1}{\sum\limits_{i=1}^{d-1}f(W=w_i \mid x^c)} - \frac{1}{\sum\limits_{i=1}^{d-1}f(W=w_i \mid y,x)}\right] = 0.
\end{aligned}
\end{equation}
Thus we have $\sum\limits_{i=1}^{d-1} f(W=w_i \mid x^c) = \sum\limits_{i=1}^{d-1} f(W=w_i \mid y,x)$. Then we substitute it into Eqn.~\eqref{each_element}, we have
\begin{equation}
    \begin{aligned}
        f(W=w_i \mid x^c) - f(W=w_i \mid x,y) = 0, \forall i\in\{d,...\mathscr{W}\}.
    \end{aligned}
\end{equation}
Contradiction! Hence we have $ {\bm{P_{22} - P_{21} P_{11}^{-1} P_{12}}} \neq \bm{0_{(\mathscr{W}-d+1)*(d-1)}}$, and then $P(\bm{W} \mid \bm{U})$ in Construction.~\ref{left_reversible_wu_construct} is left-reversible. Proved.

\begin{itemize}
\item \textbf{Conclusion 3:} If $P(\bm{W} \mid \bm{U})$ is restricted to be left-reversible and $ f(\bm{W} \mid X\neq x) =  f(\bm{W} \mid X=x,y)$, then the tight lower bound of ${f(Y_x = y)}$ is $f(y\mid X=x)$.
\end{itemize}
If this assumption holds, we will have $\bm{P^{'}_{22} - P_{21} P_{11}^{-1} P^{'}_{12}} = \bm{0}$ in the above construction, thus $P(\bm{W}\mid \bm{U})$ will be irreversible and validates the condintion here. Hence we need another way. 

According to the left-reversibility of $P(\bm{W} \mid \bm{U})$, we have
\begin{equation}
    \begin{aligned}
      f(\bm{U}\mid x,y) = P(\bm{W} \mid \bm{U})^{-1} f(\bm{W}\mid  x,y) = P(\bm{W} \mid \bm{U})^{-1} f(\bm{W}\mid x^c) = f(\bm{U} \mid x^c)
    \end{aligned}\label{totally_equal}
\end{equation}
Then we have
\begin{equation}
    \begin{aligned}
        f(Y_x = y) &= f(x,y) + \sum_{i=1}^{d}\frac{f(x,y,u_i)}{f(x,u_i)}f(u_i,x^c) \\
        &= f(x,y) + f(x,y)f(x^c) \sum_{i=1}^d \frac{f(u_i \mid x,y)}{f(x, u_i)} f(u_i \mid x^c)\\
        &= f(x,y) + f(x,y)f(x^c) \sum_{i=1}^d \frac{f(u_i \mid x,y)^2}{f(x, u_i)}\\
        & \overset{*}{\geq} f(x,y) + f(x,y)f(x^c) \frac{(\sum_{i=1}^d f(u_i\mid x,y))^2}{\sum_{i=1}^d f(x,u_i)}\\
        & = f(x,y)\left(1+\frac{f(x^c)}{f(x)}\right)\\
        & = f(y \mid x). 
    \end{aligned}
\end{equation}
According to the Chauchy's inequality, the $'\geq'$ ($*$) turns to be $'='$ if and only if $f(\bm{U} \mid x,y) = f(\bm{U} \mid x)$. Combining with Eqn.~\eqref{totally_equal}, we have $f(\bm{U} \mid x,y) = f(\bm{U} \mid x) = f(\bm{U} \mid x^c) = f(\bm{U})$. It holds if and only if $f(\bm{W} \mid x,y) = f(\bm{W} \mid x) = f(\bm{W} \mid x^c) = f(\bm{W})$, or else the lower bound is not tight.
\end{proof}

\subsubsection{Discussion 2: an acceleration trick of PI-SFP} \label{pre_train}

In this section, we provide a fast-convergent local optimization method to produce a good upper bound $\underline{f(Y_x = y)}$. It can help accelerate our algorithm. That is, if we find our optimal result in the reduced space is already larger then the local optimal value here, then it will be larger than $\underline{f(Y_x = y)}$. On this basis, we can conclude that this partition must not include the optimal solution, and we can delete this partition forever. The principle of our algorithm is based on the lemma:
\begin{lemma}
$\forall i,j$, if we make adjustment:
\begin{equation}{
    \begin{aligned}\left[
     \begin{matrix}
     &\breve{\theta}_i & \breve{\theta}_j \\
     &\breve{\psi}_i & \breve{\psi}_j \\
     &\breve{\omega}_i & \breve{\omega}_j
     \end{matrix}
     \right]
     =  \left[
     \begin{matrix}
     &\theta_i & \theta_j \\
     &\psi_i & \psi_j \\
     &\omega_i & \omega_j
     \end{matrix}
     \right]
     \left[
     \begin{matrix}
     &\alpha &1-\alpha \\
     &1-\alpha & \alpha 
     \end{matrix} \right],
     \text{~where~} \alpha \in \begin{cases}
 (0,1] \text{~if~} (\frac{\theta_{i}}{\theta_j}-\frac{\psi_{i}}{\psi_{j}})(\frac{\omega_{i}}{\omega_{j}}-\frac{\psi_{i}}{\psi_{j}}) \geq 0. \\ [1,+\infty) \text{~if~} (\frac{\theta_{i}}{\theta_j}-\frac{\psi_{i}}{\psi_{j}})(\frac{\omega_{i}}{\omega_{j}}-\frac{\psi_{i}}{\psi_{j}}) \leq 0,
 \end{cases}
    \end{aligned}}
\end{equation}
Then we have
\begin{equation}
    \begin{aligned}
     \sum_{m=i,j}\frac{\breve{\theta}_m}{\breve{\psi}_m}\breve{\omega}_m  \leq
     \sum_{m=i,j}\frac{{\theta}_m}{{\psi}_m}{\omega}_m.
    \end{aligned}
\end{equation}\label{adjustment}

\end{lemma}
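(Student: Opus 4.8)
The plan is to reduce this two-term inequality to a one-variable calculus statement along the adjustment path. Writing $a=\theta_i,\,b=\theta_j,\,p=\psi_i,\,q=\psi_j,\,c=\omega_i,\,d=\omega_j$ (all positive), the adjustment is an affine path in $\alpha$, and I would study
\[
g(\alpha):=\frac{\breve\theta_i\breve\omega_i}{\breve\psi_i}+\frac{\breve\theta_j\breve\omega_j}{\breve\psi_j}
=\frac{(\alpha a+(1-\alpha)b)(\alpha c+(1-\alpha)d)}{\alpha p+(1-\alpha)q}+\frac{((1-\alpha)a+\alpha b)((1-\alpha)c+\alpha d)}{(1-\alpha)p+\alpha q}.
\]
First I would record two elementary facts. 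The mixing matrix has unit row and column sums, so the column totals $\theta_i+\theta_j$, $\psi_i+\psi_j$, $\omega_i+\omega_j$ are invariant; hence the adjusted triple still lies in $IR_{\bm{\Phi}}$ and the denominators stay positive over the relevant range. Moreover, swapping the roles of $i$ and $j$ shows $g(0)=g(1)$, where $g(1)=\frac{\theta_i\omega_i}{\psi_i}+\frac{\theta_j\omega_j}{\psi_j}$ is precisely the right-hand side we must not exceed.

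The heart of the argument is a derivative computation at the base point $\alpha=1$. A direct (if tedious) differentiation yields the clean factorization
\[
g'(1)=(\psi_i+\psi_j)\,\frac{(\theta_i\psi_j-\theta_j\psi_i)(\omega_i\psi_j-\omega_j\psi_i)}{\psi_i^2\psi_j^2},
\]
whose sign is exactly that of $\bigl(\tfrac{\theta_i}{\theta_j}-\tfrac{\psi_i}{\psi_j}\bigr)\bigl(\tfrac{\omega_i}{\omega_j}-\tfrac{\psi_i}{\psi_j}\bigr)$, since $\theta_j,\omega_j,\psi_j>0$. This already pins down the correct local descent direction and explains the dichotomy in the statement: when the product is $\ge 0$ we should move $\alpha$ below $1$, and when it is $\le 0$ we should move $\alpha$ above $1$.

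To upgrade this local statement to a bound over the whole interval, I would analyze the curvature of $g$. Each summand is a (quadratic)/(affine) function of $\alpha$, hence equals an affine function plus a single reciprocal term $k/P$; evaluating the quadratic numerator at the pole of its affine denominator shows both summands carry the \emph{same} residue $k=\frac{(\theta_i\psi_j-\theta_j\psi_i)(\omega_i\psi_j-\omega_j\psi_i)}{(\psi_i-\psi_j)^2}$. Consequently, wherever the denominators are positive, $g''(\alpha)$ has the same sign as $(\theta_i\psi_j-\theta_j\psi_i)(\omega_i\psi_j-\omega_j\psi_i)$. I expect this residue/curvature identification to be the main obstacle, since it is exactly what ties the convexity of $g$ to the sign condition in the lemma; the remaining steps are bookkeeping.

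With the curvature sign established the conclusion is immediate. If the product is $\ge 0$, then $g$ is convex on $[0,1]$, and $g(0)=g(1)$ forces $g(\alpha)\le g(1)$ for every $\alpha\in(0,1]$. If the product is $\le 0$, then $g$ is concave on the feasible part of $[1,\infty)$ and $g'(1)\le 0$, so $g$ is non-increasing there, again giving $g(\alpha)\le g(1)$. In both cases this is precisely $\sum_{m=i,j}\frac{\breve\theta_m}{\breve\psi_m}\breve\omega_m\le\sum_{m=i,j}\frac{\theta_m}{\psi_m}\omega_m$. For the second case I would note that the range ``$\alpha\in[1,\infty)$'' is understood to be truncated at the first $\alpha$ where a denominator would vanish, which is automatic once we require the adjusted point to remain feasible.
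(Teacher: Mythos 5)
Your proof is correct, but it takes a genuinely different route from the paper's. The paper argues purely algebraically for a fixed $\alpha$: it rewrites the sign hypothesis as the polynomial inequality $(\theta_i-\theta_j)(\omega_j-\omega_i)\psi_i\psi_j+(\theta_j\omega_j\psi_i-\theta_i\omega_i\psi_j)(\psi_j-\psi_i)\le 0$, introduces the auxiliary quantity $Q_{ij}$, shows the hypothesis is equivalent to $Q_{ij}\le -Q_{ji}$, and identifies $Q_{ij}$ with $\frac{1}{1-\alpha}\psi_j\breve{\psi}_j\bigl(\breve{\theta}_i\breve{\omega}_i\psi_i-\theta_i\omega_i\breve{\psi}_i\bigr)$, after which the two-term inequality follows upon dividing by $\psi_i\psi_j\breve{\psi}_i\breve{\psi}_j>0$. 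You instead study the objective as a function $g(\alpha)$ along the mixing path and control it through its curvature; I checked that your two key identities are right: both summands do share the residue $k=\frac{(\theta_i\psi_j-\theta_j\psi_i)(\omega_i\psi_j-\omega_j\psi_i)}{(\psi_i-\psi_j)^2}$, so $g''=2k(\psi_i-\psi_j)^2\bigl(P_1^{-3}+P_2^{-3}\bigr)$ has the sign of the hypothesis product wherever the denominators are positive, and $g'(1)=(\psi_i+\psi_j)\frac{(\theta_i\psi_j-\theta_j\psi_i)(\omega_i\psi_j-\omega_j\psi_i)}{\psi_i^2\psi_j^2}$. Combined with $g(0)=g(1)$, the convexity argument on $[0,1]$ and the concavity-plus-nonpositive-slope argument on the feasible part of $[1,+\infty)$ each give $g(\alpha)\le g(1)$, which is the claim. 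Your route is more conceptual: it explains why the two sign cases send $\alpha$ to opposite sides of $1$ and yields monotonicity information along the whole path, at the price of one degenerate case you should state explicitly --- when $\psi_i=\psi_j$ the denominators are constant, the residue decomposition is vacuous, and $g$ is a quadratic with $g''=4(\theta_i-\theta_j)(\omega_i-\omega_j)/\psi_i$, which still has the sign of the hypothesis product, so the conclusion persists. The paper's computation is shorter and needs no smoothness or path argument, but it is opaque about where the dichotomy in the admissible range of $\alpha$ comes from.
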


\begin{proof}
We consider the case $\alpha \in (0,1)$, and the second case is symmetric. Due to $(\frac{\theta_i}{\theta_{j}}-\frac{\psi_{i}}{\psi_{j}}) (\frac{\omega_{i}}{\omega_{j}}-\frac{\psi_{i}}{\psi_{j}}) \geq 0$, we have
\begin{equation}
    \begin{aligned}
        (\theta_i - \theta_j)(\omega_j - \omega_i)\psi_i \psi_j + (\theta_j \omega_j \psi_i - \theta_i \omega_i \psi_j)(\psi_j - \psi_i) \leq 0.
    \end{aligned}\label{original_condition}
\end{equation}
If we denote that 
\begin{equation}
    \begin{aligned}
        Q_{ij} := \alpha \psi_i \psi_j (\theta_i - \theta_j) (\omega_j - \omega_i)
        \left[ (1-\alpha)\psi_i + \alpha \psi_j \right] + (\theta_j \omega_j \psi_i - \theta_i \omega_i \psi_j)  \left[ (1-\alpha)\psi_i + \alpha \psi_j \right] \psi_j.
    \end{aligned}
\end{equation}
Then Formulation.~\ref{original_condition} is equal to 
\begin{equation}
    \begin{aligned}
        Q_{ij}  \leq - Q_{ji}. 
    \end{aligned}\label{leq_q}
\end{equation}
Furthermore, we find 
\begin{equation}
    \begin{aligned}
        Q_{ij} &= \psi_j \breve{\psi}_j \left[ \alpha \psi_i (\theta_i - \theta_j)(\omega_j - \omega_i) + \theta_j \omega_j \psi_i - \theta_i \omega_i \psi_j\right]\\
        &= \frac{1}{1-\alpha} \psi_j \breve{\psi}_j \left[\breve{\theta}_i \breve{\omega}_i \psi_i - \theta_i \omega_i \breve{\psi}_i \right].
    \end{aligned}
\end{equation}
Hence Formulation.~\ref{leq_q} can be transformed as 
\begin{equation}
    \begin{aligned}
        \psi_j \breve{\psi}_j \left[\breve{\theta}_i \breve{\omega}_i \psi_i - \theta_i \omega_i \breve{\psi}_i\right] < - \psi_i \breve{\psi}_i \left[\breve{\theta}_j \breve{\omega}_j \psi_j - \theta_j \omega_j \breve{\psi}_j\right]
    \end{aligned}
\end{equation}
Hence 
\begin{equation}
    \begin{aligned}
        \sum_{m=i,j}\frac{\breve{\theta}_m}{\breve{\psi}_m}\breve{\omega}_m  \leq
     \sum_{m=i,j}\frac{{\theta}_m}{{\psi}_m}{\omega}_m.
    \end{aligned}
\end{equation}

Thus we have proved.
\end{proof}

In this strategy, we should choose suitable $\alpha$ to satisfy $\bm{\phi} \in IR_{\bm{\Phi}}$, namely that $f(y,\bm{W},\bm{U},\bm{X})\in \mathcal{\widetilde{F}}$.

\subsubsection{Discussion 3: Fig.~\ref{Fig.sub.2} and \ref{Fig.sub.3}}
$\mathcal{\widetilde{F}}_{Z}$ is identified as follows:

We denote
\begin{equation}
    \begin{aligned}
    \begin{matrix}
    ~~~~~~~~~~~~~~~~\theta_i = f(y,u_i,X=x,z\in \mathcal{Z})\\
    ~~~~~~~~~~~~\psi_i = f(u_i,X=x,z\in \mathcal{Z})\\
    \omega_i = f(u_i,X\neq x)\\
    \end{matrix},~
    \begin{matrix}
    \bm{\theta_{\mathcal{Z}}} &= (\theta_1, \theta_2,...\theta_d)^T\\
    \bm{\psi_{\mathcal{Z}}} &= (\psi_1, \psi_2,...\psi_d)^T\\
    \bm{\omega_{\mathcal{Z}}} &= (\omega_1, \omega_2,...\omega_d)^T\\
    \end{matrix},
    ~\bm{\phi_{\mathcal{Z}}} = \left(\begin{matrix}
    \bm{\theta_{\mathcal{Z}}}~  \bm{\psi_{\mathcal{Z}}}~ \bm{\omega_{\mathcal{Z}}}
    \end{matrix} \right).
    \end{aligned}
\end{equation}

where $f(y,\bm{W}, \bm{U}, \bm{X},\mathcal{Z}) \in \mathcal{\widetilde{F}}_{Z} = \{\bm{\phi}_{\mathcal{Z}} \in IR_{{Z}}, IR_{Z} = IR^1_{Z} \cap IR^2_{Z}\}$ leads to the following constraints that we really use:

\begin{equation}
\begin{aligned}
   &IR^{1}_{{Z}} = \{\bm{\phi}_{Z}: \left[\begin{matrix}
   -\bm{I_{d*d}} \\ \bm{I_{d*d}}
   \end{matrix}\right] \left[\begin{matrix}
   &f(y,\bm{W},X=x,z \in \mathcal{Z})^{T} \\ &f(\bm{W},X=x,z \in \mathcal{Z})^{T} \\ &f(\bm{W},X\neq x,z \in \mathcal{Z})^{T}
   \end{matrix} \right]^{T}  -  \left[ \begin{matrix}
   &-\overline{P(\bm{W}\mid \bm{U})} \\  &\underline{P(\bm{W} \mid \bm{U})}
   \end{matrix} \right] \bm{\phi}_{\mathcal{Z}} \geq \bm{0}\}.\\
   \end{aligned}
\end{equation}

Moreover, the set $IR^{2}_{\bm{\Phi}}$ indicates the natural constraints by default:

\begin{equation}
    \begin{aligned}
    IR_{Z}^{2} = \left\{ \bm{\phi}_Z:
    \left[\begin{matrix}
    &\bm{1}^{T} \bm{\theta} \\ &\bm{1}^{T} \bm{\phi} \\ &\bm{1}^{T} \bm{\omega} 
    \end{matrix}\right] = \left[ \begin{matrix}
    &f(y,X=x,z \in \mathcal{Z})\\ &f(X=x,z \in \mathcal{Z})\\ &f(X\neq x,z \in \mathcal{Z}) \}
    \end{matrix}\right], \forall i,
\left\{\begin{matrix}
    \theta_i \in [0,f(y,X=x,z \in \mathcal{Z})] \\
    \phi_i \in (0,f(X=x,z \in \mathcal{Z})] \\
    \omega_i \in [0,f(X\neq x,z \in \mathcal{Z})]
\end{matrix} \right\} \right\}.
    \end{aligned}
\end{equation}

\subsubsection{Discussion 4: the proof of Corollary.~\ref{theorem_continuous}} \label{proof_continuous}

We do partition on the confounding interval $[U^{L}, U^{U}]$ as $[u_{0}, u_{1}, u_2, ...,u_{d-1}, u_{d}]$, where $u_0 = U^{L}, u_d = U^U$. The independent variables is re-defined by
    \begin{equation}
    \begin{aligned}
    \begin{matrix}
     &~~~~\theta_i = {f(y,U\in [u_i, u_{i+1}],X=x)},\\
    &\psi_i = {f(U\in [u_i,u_{i+1}],X=x)}^{},\\  
    &\omega_i = {f(U\in [u_i, u_{i+1}],X\neq x)}^{},
    \end{matrix}
    ~~~~i=0,1,...d-1.
    \end{aligned}
\end{equation}

\begin{lemma}
Suppose that Ass.~\ref{positive definite assumption}-\ref{ass_lipschitz} hold. $\forall i \in \{0,1,...d-1\}, \forall u \in [u_{i}, u_{i+1}]$, we have
\begin{equation}
    \frac{\int_{u_i}^{u_{i+1}}f(y,u,X=x)du}{\int_{u_i}^{u_{i+1}}f(u,X=x)du} \leq \frac{f(y,u^{},X=x)}{f(u^{},X=x)}\frac{1}{1-\frac{1}{2}C_2 \eta} + \frac{\frac{1}{2}C_1 C_2 \eta}{1-\frac{1}{2}C_2 \eta}. 
\end{equation}
On the other hand,
\begin{equation}
    \frac{\int_{u_i}^{u_{i+1}}f(y,u,X=x)du}{\int_{u_i}^{u_{i+1}}f(u,X=x)du} \geq \frac{f(y,u,X=x)}{f(u,X=x)}\frac{1}{1+\frac{1}{2}C_2 \eta} - \frac{\frac{1}{2}C_1 C_2 \eta}{1+\frac{1}{2}C_2 \eta}. 
\end{equation}

\end{lemma}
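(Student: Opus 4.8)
The plan is to reduce the claimed integral inequality to a pointwise Lipschitz estimate on a single cell $[u_i,u_{i+1}]$ and then absorb the cell length into the parameter $\eta$ using the boundedness assumption. Write $\ell := u_{i+1}-u_i$, and for a fixed $u\in[u_i,u_{i+1}]$ abbreviate $p:=f(y,u,X=x)$ and $q:=f(u,X=x)$. The two bounds of Ass.~\ref{ass_lipschitz} control the variation of numerator and denominator densities across the cell: for every $u'\in[u_i,u_{i+1}]$ one has $|f(u',X=x)-q|\le C_2|u'-u|$, and chaining this with the first Lipschitz bound gives $|f(y,u',X=x)-p|\le C_1|f(u',X=x)-q|\le C_1C_2|u'-u|$.

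First I would integrate these pointwise estimates over the cell. Using $\int_{u_i}^{u_{i+1}}|u'-u|\,du'\le \tfrac12\ell^2$, the numerator integral obeys $\int_{u_i}^{u_{i+1}}f(y,u',X=x)\,du'\le p\,\ell+\tfrac12 C_1C_2\,\ell^2$, while the denominator integral obeys $\int_{u_i}^{u_{i+1}}f(u',X=x)\,du'\ge q\,\ell-\tfrac12 C_2\,\ell^2$. Dividing (the denominator is positive once $\ell$ is small) and cancelling one factor of $\ell$ yields the clean intermediate bound
\begin{equation*}
\frac{\int_{u_i}^{u_{i+1}}f(y,u',X=x)\,du'}{\int_{u_i}^{u_{i+1}}f(u',X=x)\,du'}\le \frac{p+\tfrac12 C_1C_2\,\ell}{q-\tfrac12 C_2\,\ell}.
\end{equation*}
The lower bound is obtained symmetrically, reversing the two Lipschitz inequalities to produce $p\,\ell-\tfrac12 C_1C_2\ell^2$ in the numerator and $q\,\ell+\tfrac12 C_2\ell^2$ in the denominator, hence the mirror estimate $\tfrac{p-\tfrac12 C_1C_2\ell}{\,q+\tfrac12 C_2\ell\,}$.

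The final step is to replace the cell length $\ell$ by $\eta$ inside these fractions so as to match the stated form $\tfrac{1}{1\mp\frac12 C_2\eta}\big(\tfrac pq\pm\tfrac12 C_1C_2\eta\big)$. The single inequality I would establish is $\ell\le\eta\,q$, i.e. $\ell\le\eta\,f(u,X=x)$; granting this, monotonicity gives $p+\tfrac12 C_1C_2\ell\le p+\tfrac12 C_1C_2\eta\,q$ and $q-\tfrac12 C_2\ell\ge q\big(1-\tfrac12 C_2\eta\big)$, and dividing reproduces exactly the claimed upper bound, with the lower bound following identically. To secure $\ell\le\eta q$ I would combine the hypothesis $\max_i|u_i-u_{i-1}|<\eta\delta$ with the boundedness assumption Ass.~\ref{positive definite assumption}, reading off a lower bound $\delta$ for the relevant mass and concluding $\ell<\eta\delta\le\eta\,f(u,X=x)$. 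I expect this conversion to be the main obstacle: one must argue carefully that the positive-definiteness guarantee $\psi_i\ge\delta$ transfers to the pointwise density bound $f(u,X=x)\ge\delta$ required here (not merely to the cell average $\psi_i/\ell$), and one must simultaneously keep the denominators $q-\tfrac12 C_2\ell$ and $1-\tfrac12 C_2\eta$ strictly positive — precisely the regime $\eta<2/C_2$ encoded by the factor $2-C_2\eta$ appearing in Corollary~\ref{theorem_continuous}.
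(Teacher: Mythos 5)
Your proposal is correct and follows essentially the same route as the paper's proof: chain the two Lipschitz bounds to control the variation of numerator and denominator over the cell, integrate to pick up the $\tfrac12\ell^2$ terms, divide, and then use $\ell<\eta\delta\le\eta f(u,X=x)$ to replace the cell length by $\eta$. The concern you flag at the end is real but is exactly the step the paper also takes implicitly (it bounds $\eta\delta/f(u',X=x)\le\eta$, i.e., assumes the pointwise density bound $f(u,X=x)\ge\delta$), so there is no substantive difference between the two arguments.
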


\begin{proof} $\forall u^{'} \in [u_i, u_{i+1}]$, we have

\begin{equation}
    \begin{aligned}
    \frac{\int_{u_i}^{u_{i+1}}f(y,u,X=x)du}{\int_{u_i}^{u_{i+1}}f(u,X=x)du} &\leq \frac{\int_{u_i}^{u_{i+1}}\left[f(y,u^{'},X=x) + C_1 \left|f(u, X=x) - f(u^{'}, X=x)\right| \right] du}{\int_{u_i}^{u_{i+1}}\left[f(u^{'},X=x)+\left(f(u,X=x)-  f(u^{'},X=x) \right) \right] du} \\
    &\leq \frac{f(y,u^{'}, X=x)(u_{i+1} - u_i) + C_1 C_2 \frac{1}{2}(u_{i+1}-u_i)^2}{f(u^{'},X=x)(u_{i+1} - u_i) - C_2 \frac{1}{2}(u_{i+1} - u_i)^2}\\
    &\leq \frac{\frac{f(y, u^{'},X=x)}{f(u^{'},X=x)}+\frac{\frac{1}{2} C_1 C_2 \eta \delta}{f(u^{'}, X=x)}}{1-\frac{\frac{1}{2} C_2 \eta \delta}{f(u^{'}, X=x)}} \\
    &\leq \frac{f(y,u^{'},X=x)}{f(u^{'},X=x)}\frac{1}{1-\frac{1}{2}C_2 \eta} + \frac{\frac{1}{2}C_1 C_2 \eta}{1-\frac{1}{2}C_2 \eta}. 
    \end{aligned}
\end{equation}

Analogously, we can prove the other direction. Thus we have proved the lemma.
\end{proof}

Then we prove our main theorem. 
\begin{proof}
If we use $\underline{f(y,\bm{U},X=x)}, \underline{f(\bm{U},X=x)},\underline{f(\bm{U},X\neq x)}$ to denote the optimal solution of the optimal value $\underline{f(Y_x = y)}$ in the continuous case, then we have
\begin{equation}
    \begin{aligned}
    &\underline{f(Y_x = y)} -  f(y,X=x) \\
    =& \int_{U^{L}}^{U^U} \frac{\underline{f(y,u,X=x)}}{\underline{f(u,X=x)}}\underline{f(u,X\neq x)}du \\
    =& \sum_{i=0}^{d-1}\int_{u_i}^{u_{i+1}} \frac{\underline{f(y,u,X=x)}}{\underline{f(u,X=x)}}\underline{f(u,X\neq x)}du \\
    \geq &  \sum_{i=0}^{d-1}\int_{u_i}^{u_{i+1}} \left[\frac{\int_{u_i}^{u_{i+1}}\underline{f(y,u,X=x)}du}{\int_{u_i}^{u_{i+1}}\underline{f(u,X=x)}du} - \frac{\frac{1}{2}C_1 C_2 \eta}{1-\frac{1}{2} C_2 \eta}\right] (1-\frac{1}{2} C_2 \eta) \underline{f(u,X\neq x)}du \\
    =& (1-\frac{1}{2} C_2 \eta) \sum_{i=0}^{d-1} \frac{\int_{u_i}^{u_{i+1}}\underline{f(y,u,X=x)}du}{\int_{u_i}^{u_{i+1}}\underline{f(u,X=x)}du} \int_{u_i}^{u_{i+1}}\underline{f(u,X\neq x)}du - \frac{1}{2}C_1 C_2 \eta f(X\neq x).
    \end{aligned}\label{partial_bound}
\end{equation}
Here $\{\int_{u_i}^{u_{i+1}}\underline{f(y,u,X=x)}du, \int_{u_i}^{u_{i+1}}\underline{f(u,X=x)}du, \int_{u_i}^{u_{i+1}}\underline{f(u,X\neq x)}du, i=0,1,...d-1\}$ is within the feasible region of PI-SFP in the discrete case. Then we have
\begin{equation}
    \begin{aligned}
    \eqref{partial_bound} &\geq (1-\frac{1}{2} C_2 \eta) \left( \lim\limits_{n \rightarrow +\infty}\underline{\underline{f_{opt}^n (Y_x = y)}} - f(y, X=x)\right) - \frac{1}{2}C_1 C_2 \eta f(X\neq x)\\
    \underline{f(Y_x = y)} &\geq (1-\frac{1}{2} C_2 \eta)\lim\limits_{n \rightarrow +\infty} \underline{\underline{f_{opt}^n (Y_x = y)}} + \frac{1}{2}C_2 \eta f(y, X=x) - \frac{1}{2}C_1 C_2 \eta f(X\neq x)\\
    \lim\limits_{n \rightarrow +\infty} \underline{\underline{f_{opt}^n (Y_x = y)}} &\leq \frac{1}{1-\frac{1}{2}C_2 \eta}\underline{f(Y_x = y)} + \frac{\frac{1}{2}C_1f(X\neq x) - \frac{1}{2} f(y, X=x)}{1-\frac{1}{2}C_2 \eta} C_2 \eta 
    \end{aligned}
\end{equation}

On the other hand, each optimal solution by PI-SFP corresponds to a solution in the continuous case. Namely if the discrete PI-SFP's optimal solution is denoted as $\{\int_{u_i}^{u_{i+1}}{f(y,u,X=x)}du, \int_{u_i}^{u_{i+1}}{f(u,X=x)}du, \int_{u_i}^{u_{i+1}}{f(y,u,X\neq x)}du, i=0,1,...d-1\}$. Then we can construct  
\begin{equation}
    \begin{aligned}
    f^{opt}(y,u,X=x) &= \frac{\int_{u_i}^{u_{i+1}}{f(y,u,X=x)}du}{u_{i+1} - u_i}, u\in [u_i, u_{i+1}).\\
    f^{opt}(u,X=x) &= \frac{\int_{u_i}^{u_{i+1}}{f(u,X=x)}du}{u_{i+1} - u_i}, u\in [u_i, u_{i+1}).\\
    f^{opt}(u,X\neq x) &= \frac{\int_{u_i}^{u_{i+1}}{f(u,X\neq x)}du}{u_{i+1} - u_i}, u\in [u_i, u_{i+1})
    \end{aligned}
\end{equation}
as one of the solution in the continuous case. Hence $\lim\limits_{n \rightarrow +\infty}\underline{\underline{f_{opt}^n (Y_x = y)}} \geq \underline{f(Y_x = y)}$. Hence we have proved.

\end{proof}

\end{document}